\theoremstyle{plain}
\newtheorem{theorem}{Theorem}[section]
\newtheorem{lemma}[theorem]{Lemma}
\newtheorem{proposition}[theorem]{Proposition}
\newtheorem{corollary}[theorem]{Corollary}
\theoremstyle{definition}
\newtheorem{example}[theorem]{Example}
\newtheorem{examples}[theorem]{Examples}
\theoremstyle{remark}
\newtheorem{remark}[theorem]{Remark}
\newlist{tfae}{enumerate}{1}%
\setlist[tfae,1]{label=(\roman*)}
\newcommand{\catfont}[1]{\mathsf{#1}}
\newcommand{\Set}{\catfont{Set}}
\newcommand{\Top}{\catfont{Top}}
\newcommand{\Ord}{\catfont{Ord}}
\newcommand{\catC}{\catfont{C}}
\renewcommand{\epsilon}{\varepsilon}
\newcommand{\LaxComma}[2]{#1{\,\Downarrow\,}#2}
\newcommand{\TopX}{\LaxComma{\Top}{X}}
\newcommand{\SetX}{\LaxComma{\Set}{X}}
\newcommand{\Comma}[2]{#1{\,\downarrow\,}#2}
\DeclareMathOperator{\Fam}{\mathsf{Fam}}
\DeclareMathOperator{\Lan}{\mathsf{Lan}}
\newcommand{\doo}[1]{\overset{\centerdot}{#1}}
\newcommand{\ev}{\mathrm{ev}}
\newcommand{\id}{\mathrm{id}}
\newcommand{\lcm}{\mathrm{lcm}}
\newcommand{\bkN}{\mathbb{N}}
\newcommand{\bkS}{\mathbb{S}}
\newcommand{\bkZ}{\mathbb{Z}}
\title{Topological lax comma categories}
\author[1]{Maria Manuel Clementino}
\affil[1]{CMUC, Department of Mathematics, University of Coimbra, 3000-143
Coimbra, Portugal, \texttt{mmc@mat.uc.pt}}
\author[2]{Dirk Hofmann}
\author[3]{Rui Prezado}
\affil[2,3]{Center for Research and Development in Mathematics and
Applications (CIDMA), Department of Mathematics, University of Aveiro,
3810-193 Aveiro, Portugal, \texttt{dirk@ua.pt}\hspace{3mm}\texttt{ruiprezado@ua.pt}}
\begin{document}

\maketitle

\begin{abstract}
  This paper investigates the interplay between properties of a topological
  space \(X\), in particular of its natural order, and properties of the lax
  comma category \(\TopX\), where \(\Top\) denotes the category of topological
  spaces and continuous maps. Namely, it is shown that, whenever \(X\) is a
  topological \(\bigwedge\)-semilattice, the canonical forgetful functor
  \(\TopX\to\Top\) is topological, preserves and reflects exponentials, and
  preserves effective descent morphisms. Moreover, under additional conditions
  on \(X\), a characterisation of effective descent morphisms is obtained.
  \vspace{3mm}

  \noindent
  \textit{Keywords: exponentiability, descent theory, lax comma category,
  order topologies}

  \noindent
  \textit{2020 MSC: 06B30, 06B35, 06F30, 18A25, 18F60, 18F20, 22A26, 54B30}
\end{abstract}

\paragraph{Acknowledgements.}
We thank Matías Menni for calling our attention to the recent paper
\cite{AMS25} and for raising us the question under which conditions the
canonical forgetful functor \(\TopX\to\Top\) is topological.

\paragraph{Funding}
This work was partially supported by the Centre for Mathematics of the
University of Coimbra UID/MAT/00324/2025, funded by the Portuguese Government
through the Portuguese Foundation for Science and Technology/FCT, and by CIDMA
under the Portuguese Foundation for Science and Technology (FCT,
https://ror.org/00snfqn58)  Multi-Annual Financing Program for R\&D Units,
grants UID/4106/2025 and UID/PRR/4106/2025.

\paragraph{Author Contributions Statement}
The authors declare that MMC, DH and RP made equal contributions to all stages
of the development of this work.

{\let\thefootnote\relax\footnote{Corresponding author: Rui Prezado}}

\section*{Introduction}

The recent study of the lax comma categories \(\LaxComma{\Ord}{X}\) and
\(\LaxComma{\catfont{Cat}}{X}\) in \cite{CN23, CNP24, CJ24, CP25}, and the
implicit use of \(\LaxComma{\Top}{\bkN}\) by \cite{AMS25}, led the authors of
this paper to investigate the behaviour of the lax comma category \(\TopX\) we
describe below, namely its (co)completeness, exponentiability and descent. As
expected, this behaviour depends essentially on the interplay between the
order and the topology on \(X\), and leads to the study of interesting
properties of this order.

For a topological space \(X\), we consider the \emph{natural order}
 on \(X\) \cite{Hoc69}, with \(x\leq y\) whenever the principal filter
\(\doo{x}\) converges to \(y\): \(\doo{x}\leadsto y\); equivalently, whenever every
open neighbourhood of \(y\) contains \(x\): \(\mathcal{O}(y)\subseteq
\mathcal{O}(x)\). This relation is reflexive and transitive; it is
anti-symmetric if and only if \(X\) is a T0-space. Every continuous map \(f\colon X\to Y\) is a monotone map with respect to the natural orders on \(X\) and \(Y\), and therefore this construction defines a
functor \(\Top\to\Ord\) from \(\Top\) onto the category \(\Ord\) of ordered
sets and monotone maps, and allows us to consider \(\Top\) as an ordered
category where, for continuous maps \(f,g \colon X\to Y\), \(f\leq g\)
if \(f(x)\leq g(x)\), for all \(x\in X\). We note that the order
relation on a space considered in this paper is dual to the specialisation
order \cite{Sco72} used by many authors. We find it convenient to
think of the convergence of a topological space as a ``generalised order'',
and therefore take as underlying order of a space the ``point shadow'' of the
convergence relation. This intuition makes also a better connection with the
results of \cite{CN23, CJ24, CP25} on lax comma categories of ordered sets.

The category \(\TopX\), called \emph{lax comma category} over \(X\), has as objects pairs \((A,\alpha)\), where \(A\) is a topological space and \(\alpha \colon A\to X\) is a continuous map, and as morphisms \((A,\alpha) \to (B,\beta)\) continuous maps \(f \colon A\to B\) such that \(\alpha\leq \beta\cdot f\):
\begin{displaymath}
  \begin{tikzcd}
    A \ar{rr}{f}[swap, inner sep=1em]{\leq}\ar{dr}[swap]{\alpha}
    && B\ar{dl}{\beta}\\
    & X
  \end{tikzcd}
\end{displaymath}
It has as a (wide) subcategory the comma category \(\Comma{\Top}{X}\), which
has the same objects as \(\TopX\) but a morphism \(f\colon
(A,\alpha)\to(B,\beta)\) must satisfy \(\alpha=\beta\cdot f\). It is
well-known that the category \(\Comma{\Top}{X}\) is complete and cocomplete,
and its canonical functor \(\Comma{\Top}{X}\to\Top\) preserves equalisers and
colimits, but in general not products, nor exponentials; moreover, it
trivially preserves and reflects (effective) descent morphisms.

If \(X\) is a T1-space, then its natural order is discrete, and therefore the lax comma category \(\TopX\) coincides with the comma category \(\Comma{\Top}{X}\). Furthermore, for any \(X\), if we denote by \(\eta_X\colon X\to RX\) its T0-reflection, where \(\eta_X(x)=\eta_X(y)\) if and only if \(x\leq y\) and \(y\leq x\), and \(RX\) has the quotient topology, it is easily checked that the functor \(\TopX\to\LaxComma{\Top}{RX}\) defined by composing with \(\eta_X\) is an equivalence of categories making the diagram
\begin{displaymath}
  \begin{tikzcd}
    \TopX \ar{rr}{}[swap, inner sep=1em]{}\ar{dr}[swap]{G}
    && \Top\Downarrow RX\ar{dl}{G}\\
    & \Top
  \end{tikzcd}
\end{displaymath}
commute, where \(G\) denotes the canonical forgetful functor(s). Therefore, for simplicity, we may reduce our study to categories \(\TopX\) with \(X\) a T0-space:
\begin{center}
  \emph{From now on we assume that \(X\) is a topological T0-space}.
\end{center}

In Section \ref{sec:topologicity} we investigate the existence of limits and colimits in \(\TopX\), starting by showing that sums and equalisers always exist, and that \(\TopX\) is an (infinitely) extensive category. It is also shown that the existence of coequalisers and their preservation by \(G\colon\TopX\to\Top\) is equivalent to the existence and preservation of products by \(G\); it is in fact equivalent to \(G\) being a topological functor, and equivalent to \(X\) being sober and a topological \(\wedge\)-semilattice. These properties of \(X\) can be formulated in several different ways, as stated in Theorem \ref{d:thm:3}, and in particular reveal \(X\) as an algebra for the lower Vietoris monad on \(\Top\).

In Section \ref{sec:exponentiability} we study exponentiability in \(\TopX\), making use of the Special Adjoint Functor Theorem. In particular we generalise results obtained (using different techniques) in \cite{Nie06}, showing that, if \(X\) is a topological Heyting \(\wedge\)-semilattice, then the forgetful functor \(G\) preserves and reflects exponentiable objects and exponentials; moreover, reflection of exponentiable objects by \(G\) is equivalent to \(X\) being a topological Heyting \(\wedge\)-semilattice.

In the last section we study effective descent morphisms in \(\TopX\). Here we
follow the strategy of \cite{CJ24, CP25}, considering the pseudopullback
\begin{displaymath}
  \begin{tikzcd}
    \Top\times_{\Set}\Fam(X) %
    \ar{r}{\rho_{2}} %
    \ar{d}[swap]{\rho_{1}} %
    & \Fam(X) %
    \ar{d}{} \\
    \Top %
    \ar{r}[swap]{} %
    & \Set %
  \end{tikzcd}
\end{displaymath}
and the canonical functor
\begin{displaymath}
  H \colon\TopX \longrightarrow \Top\times_{\Set}\Fam(X).
\end{displaymath}
induced by the corresponding forgetful functors. This leads us to obtain, in
Theorems \ref{d:thm:7} and \ref{d:thm:8}, characterizations of effective
descent morphisms in \(\TopX\), for suitable \( X \). We point out that the category \(\Fam(X)\) is a lax comma category -- \(\SetX\) -- but we chose to use here, as in \cite{CP25}, the more familiar notation \(\Fam(X)\).

\section{The category \(\TopX\)}
\label{sec:topologicity}

In this section we investigate basic properties of \(\TopX\) and their relationship with properties of the base space \(X\).
As already observed by \cite{Nie06}, the canonical forgetful functor \(G\colon\TopX\to\Top\) has a left adjoint if and only if \(X\) has a bottom element \(\bot\); in this case, the left adjoint \(L\colon\Top\to\TopX\) is also a right inverse; it sends a space \(A\) to the pair consisting of \(A\) together with the constant map
\begin{align*}
  A & \longrightarrow X\\
  a & \longmapsto \bot
\end{align*}
and leaves continuous maps unchanged. Hence \(G\) is a right adjoint if and only if it is a \emph{rali} (=right adjoint left inverse).
On the other hand, \(G\colon\TopX\to\Top\) is a \emph{lali} (left adjoint left inverse) if and only if \(X\) has a top element \(\top\); here its right adjoint right inverse \(\Top\to\TopX\) sends a space \(A\) to the pair consisting of \(A\) together with the constant map \(A\to X,\,a\mapsto\top\). Note that \(G\) may be a left adjoint without being a \emph{lali}; this is for instance the case when \(X\) is a T1-space, since the forgetful functor \(\Comma{\Top}{X}\to\Top\) is always a left adjoint.
Furthermore, the following properties are easy to verify:
\begin{proposition}
  \label{d:prop:1}
  \begin{enumerate}
  \item The category \(\TopX\) has sums: for a family \((A_i,\alpha_i)_{i\in I}\) of objects of \(\TopX\), its sum in \(\TopX\) is given by
    \begin{displaymath}
      \left(\Big(\sum_{i\in I}A_i,\alpha\Big),\Big(\iota_j\colon A_j\to\sum_{i\in I}A_i\Big)_{j\in I}\right),
    \end{displaymath}
    where \((\sum_{i\in I}A_i,(\iota_j)_{j\in I})\) is the sum of $(A_i)_{i\in I}$ in \(\Top\), and \(\alpha\colon \sum_{i\in I}A_i\to X\) is the continuous map induced in the topological sum by the family \( (\alpha_i\colon A_i\to X)_{i\in I}\).
  \item\label{dois} The category \(\TopX\) has equalisers: for a parallel pair \(f,g \colon (A,\alpha) \to (B,\beta)\) of arrows in \(\TopX\), its equaliser in \(\TopX\) is given by \(e \colon (E,\alpha\cdot e)\to(A,\alpha)\) where
    \begin{displaymath}
      \begin{tikzcd}
        E\ar{r}{e} %
        & A\ar[shift left=3pt]{r}{f} \ar[shift right=3pt]{r}[swap]{g} %
        & B
      \end{tikzcd}
    \end{displaymath}
    is an equaliser in \(\Top\).
  \end{enumerate}
\end{proposition}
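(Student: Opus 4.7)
The plan is to build each (co)limit by first taking the corresponding (co)limit in \(\Top\), equipping the resulting space with the canonical continuous map to \(X\), and then checking the universal property in \(\TopX\). Since morphisms in \(\TopX\) are continuous maps satisfying the pointwise inequality \(\alpha \leq \beta \cdot f\), the only recurring task at each step is to confirm that this inequality is preserved by the structural morphisms and the mediating one.

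For part (1), I would take \(\alpha\colon\sum_{i\in I}A_i \to X\) to be the unique continuous map induced by \((\alpha_i)_{i\in I}\) via the universal property of the coproduct in \(\Top\); the equality \(\alpha\cdot\iota_j = \alpha_j\) then makes each coprojection \(\iota_j\) trivially a \(\TopX\)-morphism. Given a cocone \((f_j\colon (A_j,\alpha_j)\to(B,\beta))_{j\in I}\) in \(\TopX\), I would take the unique continuous mediator \(f\colon \sum_i A_i\to B\) supplied by \(\Top\) and observe that, for any \(x = \iota_j(a)\), the estimate \(\alpha(x) = \alpha_j(a) \leq \beta(f_j(a)) = \beta(f(x))\) holds, so \(\alpha \leq \beta\cdot f\) and \(f\) lifts to a \(\TopX\)-morphism; uniqueness is inherited from \(\Top\).

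For part \ref{dois}, I would take the \(\Top\)-equaliser \(e\colon E\to A\) of \(f,g\) and equip \(E\) with the structure map \(\alpha\cdot e\); then \(e\) is trivially a \(\TopX\)-morphism, and the identities \(f\cdot e = g\cdot e\) hold already at the level of continuous maps. Given any morphism \(h\colon(C,\gamma)\to(A,\alpha)\) in \(\TopX\) with \(f\cdot h = g\cdot h\), the unique continuous lift \(\tilde h\colon C\to E\) of \(h\) through \(e\) satisfies \(\gamma \leq \alpha\cdot h = (\alpha\cdot e)\cdot\tilde h\), hence belongs to \(\TopX\); uniqueness again transfers from \(\Top\).

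No real obstacle is expected: in both cases the structure map on the apex is forced by the \(\Top\)-side of the construction, and the inequality defining \(\TopX\)-morphisms is closed under composition, so the argument amounts essentially to unwinding definitions.
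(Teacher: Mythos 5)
Your proof is correct and is exactly the routine verification the paper has in mind: the paper states this proposition without proof (introducing it as ``easy to verify''), and your argument --- lifting the \(\Top\)-colimit/limit, equipping the apex with the induced (respectively restricted) structure map, and checking that the defining inequality \(\alpha\leq\beta\cdot f\) passes to coprojections and mediating morphisms --- is the intended one. Nothing is missing; uniqueness of the mediators does indeed transfer from \(\Top\) because the forgetful functor is faithful and the lax triangle condition is a property of, not extra structure on, a continuous map.
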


Recall that a category \(\catfont{C}\) with sums is called \emph{(infinitely) extensive} (see \cite{CLW93}) whenever, for every small family \((C_i)_{i\in I}\) of objects of \(\catfont{C}\), the canonical functor
\begin{displaymath}
  \sum \colon
  \prod_{i\in I}(\Comma{\catfont{C}}{C_i}) \longrightarrow
  \Comma{\catfont{C}}{\Big(\sum_{i\in I}C_i\Big)}
\end{displaymath}
is an equivalence. The category \(\Top\) is extensive, and this property is inherited by \(\TopX\):

\begin{proposition}
  The category \(\TopX\) is extensive.
\end{proposition}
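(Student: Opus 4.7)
The plan is to reduce extensivity of $\TopX$ to extensivity of $\Top$, exploiting the fact that the forgetful functor $G \colon \TopX \to \Top$ preserves sums by Proposition \ref{d:prop:1}(1), and that continuous maps out of a topological sum into $X$ are uniquely determined by their restrictions to the summands.

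Fix a small family $(C_i, \gamma_i)_{i \in I}$ in $\TopX$; by Proposition \ref{d:prop:1}(1) its sum is $(\sum_{i \in I} C_i, \gamma)$, where $\gamma$ restricts to $\gamma_i$ on each summand. An object of $\Comma{\TopX}{(\sum_i C_i, \gamma)}$ is a triple $(A, \alpha, f)$ with $f \colon A \to \sum_i C_i$ and $\alpha \colon A \to X$ continuous and $\alpha \leq \gamma \cdot f$. By extensivity of $\Top$, setting $A_i := f^{-1}(C_i)$ yields a canonical isomorphism $A \cong \sum_i A_i$ over $\sum_i C_i$; writing $f_i \colon A_i \to C_i$ for the restriction of $f$ and $\alpha_i$ for the restriction of $\alpha$ to $A_i$, the inequality $\alpha \leq \gamma \cdot f$ is equivalent to the family of inequalities $\alpha_i \leq \gamma_i \cdot f_i$, since every point of $A$ lies in a unique $A_i$ and on it $\gamma \cdot f$ agrees with $\gamma_i \cdot f_i$. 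Conversely, given a family of objects $(A_i, \alpha_i, f_i)_{i \in I}$ of $\prod_i \Comma{\TopX}{(C_i, \gamma_i)}$, the topological sum $A := \sum_i A_i$ with the induced maps $\alpha \colon A \to X$ and $f \colon A \to \sum_i C_i$ satisfies $\alpha \leq \gamma \cdot f$ on each summand, hence globally. These assignments are mutually inverse up to canonical isomorphism.

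A parallel argument handles morphisms: a morphism $h \colon (A, \alpha, f) \to (B, \beta, g)$ in $\Comma{\TopX}{(\sum_i C_i, \gamma)}$ satisfies $g \cdot h = f$, so $h$ sends $A_i$ into $B_i := g^{-1}(C_i)$ and thus decomposes as $(h_i)_{i \in I}$ with each $h_i$ inheriting the inequality $\alpha_i \leq \beta_i \cdot h_i$ from the global one; conversely, a compatible family of $h_i$ glues to such an $h$. The only point requiring any attention is the pointwise compatibility of $\alpha \leq \gamma \cdot f$ with the disjoint decomposition $A = \sum_i A_i$, which is immediate; the remainder is a formal consequence of extensivity of $\Top$.
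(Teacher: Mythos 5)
Your proof is correct and follows essentially the same route as the paper's: both reduce extensivity of \(\TopX\) to extensivity of \(\Top\) together with the observation that the order on maps \(\sum_i A_i\to X\) is computed componentwise, i.e.\ that \(\prod_i\Top(A_i,X)\to\Top(\sum_i A_i,X)\) is an isomorphism of ordered sets. The paper states this in one line; you have simply unpacked the resulting equivalence of slice categories explicitly.
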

\begin{proof}
  The assertion follows immediately from the facts that \(\Top\) is extensive and that, for all small families \((A_i)_{i\in I}\) of topological spaces, the canonical map
  \begin{displaymath}
    \prod_{i\in I}\Top(A_i,X) \longrightarrow
    \Top\Big(\sum_{i\in I}A_i,X\Big)
  \end{displaymath}
  is an isomorphism of ordered sets (see also \cite[Theorem~41]{NV23}).
\end{proof}

Being extensive guarantees that a category has ``well-behaved sums''; moreover, extensive categories with finite products are (infinitely) distributive:

\begin{corollary}
  \label{d:cor:3}
  Assume that all binary products exist in \(\TopX\). Then the functor
  \begin{displaymath}
    (A,\alpha)\times - \colon \TopX \longrightarrow\TopX
  \end{displaymath}
  preserves sums.
\end{corollary}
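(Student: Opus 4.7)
The plan is to invoke the classical fact that in any extensive category, binary products distribute over (existing) small sums, so the functor $(A,\alpha)\times-$ preserves sums \cite{CLW93}. Since the previous proposition shows $\TopX$ is extensive, and by hypothesis binary products exist in $\TopX$, the conclusion follows immediately. For concreteness I would spell out the argument as follows.

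Fix an object $(A,\alpha)$ and a family $(B_i,\beta_i)_{i\in I}$ in $\TopX$, set $(B,\beta):=\sum_{i\in I}(B_i,\beta_i)$ with coprojections $\iota_i$, and set $(P,\pi):=\sum_{i\in I}(A,\alpha)\times(B_i,\beta_i)$. The canonical comparison $(P,\pi)\to(A,\alpha)\times(B,\beta)$ lives over $(B,\beta)$, so I view both sides as objects of $\Comma{\TopX}{(B,\beta)}$ and apply the equivalence
\[
\sum\colon \prod_{i\in I}\Comma{\TopX}{(B_i,\beta_i)} \longrightarrow \Comma{\TopX}{(B,\beta)}
\]
supplied by extensivity, whose quasi-inverse is pullback along the coprojections. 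By construction, $(P,\pi)\to(B,\beta)$ is the image of the family $\bigl((A,\alpha)\times(B_i,\beta_i)\to(B_i,\beta_i)\bigr)_{i\in I}$. It therefore suffices to show that the pullback of the projection $(A,\alpha)\times(B,\beta)\to(B,\beta)$ along $\iota_i$ is again $(A,\alpha)\times(B_i,\beta_i)$; both objects will then correspond under the equivalence to the same family, hence be isomorphic in $\Comma{\TopX}{(B,\beta)}$, and the comparison map will be this isomorphism.

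The main — and essentially only — technical step is this identification of the pullback. The requisite pullback exists by extensivity (the pullback of a coprojection along any morphism exists and is again a coprojection), and the identification is a routine comparison of universal properties: since each $\iota_i$ is a monomorphism in the extensive category $\TopX$, a morphism into the pullback is the same datum as a pair of morphisms into $(A,\alpha)$ and into $(B_i,\beta_i)$, i.e., a morphism into $(A,\alpha)\times(B_i,\beta_i)$. I anticipate no genuine obstacle beyond this bookkeeping.
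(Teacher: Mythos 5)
Your proof is correct and takes essentially the same route as the paper, which offers no separate argument but derives the corollary immediately from the preceding proposition (\(\TopX\) is extensive) together with the standard fact from \cite{CLW93} that an extensive category with the relevant products is distributive; your write-up simply unfolds the standard proof of that fact via the equivalence \(\prod_{i\in I}\Comma{\TopX}{(B_i,\beta_i)}\simeq\Comma{\TopX}{(B,\beta)}\) and the identification of the pullback of the projection along \(\iota_i\) with \((A,\alpha)\times(B_i,\beta_i)\). (A minor cosmetic point: that last identification follows from the universal property of the product alone and does not need \(\iota_i\) to be a monomorphism.)
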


Further important (co)completeness properties of \(\TopX\) are in direct relation with (co)complete\-ness properties of the space \(X\), as we explain next.
We start by considering the existence of coequalisers. We point out that our assumption that the base space $X$ is T0 guarantees uniqueness of infima and suprema.

\begin{lemma}
  Let \(f,g \colon (A,\alpha)\to (B,\beta)\) be arrows in \(\TopX\) and let \(q \colon B\to Q\) be a coequaliser of \(f,g \colon A\to B\) in \(\Top\).
  Then \(q \colon (B,\beta)\to(Q,\gamma)\) is a coequaliser of \(f,g\colon (A,\alpha)\to (B,\beta)\) in \(\TopX\) if and only if \(\gamma \colon Q\to X\) is the left Kan extension \(\Lan_q(\beta)\colon Q\to X\) of \(\beta \colon B\to X\) along \(q \colon B\to Q\).
  \begin{displaymath}
    \begin{tikzcd}[column sep=large,row sep=large]
      A \ar[shift left=3pt]{r}{f} %
        \ar[shift right=3pt]{r}[swap]{g} %
        \ar[rd,phantom,"\leq",pos=0.6,shift left=2.3mm]
        \ar[dr,swap,"\alpha",bend right=25]
      & B
         \ar[d,"\beta" description]
         \ar[r,"q"]
      & Q
        \ar[dl,phantom,"\leq",pos=0.6,shift right=2.3mm]
        \ar[dl,dotted,bend left=25,"\gamma=\Lan_q(\beta)"]\\
      & X
    \end{tikzcd}
  \end{displaymath}
\end{lemma}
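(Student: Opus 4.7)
The plan is to prove both implications directly from the two universal properties involved, exploiting the fact that a coequaliser in $\Top$ is in particular an epimorphism. The argument is essentially formal $2$-categorical bookkeeping; the only step that goes beyond unwinding definitions is the use of the epi property of $q$ in the converse direction, which is what converts the abstract factorisation identity $k\cdot q=q$ into a pointwise order comparison $\gamma\leq\gamma'$. This will be the delicate point of the proof.

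For the direction $(\Leftarrow)$, I assume $\gamma=\Lan_q(\beta)$. The Kan extension unit $\beta\leq\gamma\cdot q$ is precisely the condition for $q\colon(B,\beta)\to(Q,\gamma)$ to be a morphism of $\TopX$, and $q\cdot f=q\cdot g$ already holds in $\Top$. To verify the universal property in $\TopX$, I take a morphism $h\colon(B,\beta)\to(C,\delta)$ of $\TopX$ with $h\cdot f=h\cdot g$; the coequaliser property of $q$ in $\Top$ produces a unique continuous $k\colon Q\to C$ with $k\cdot q=h$. Since $(\delta\cdot k)\cdot q=\delta\cdot h\geq\beta$, the universal property of $\Lan_q(\beta)=\gamma$ yields $\gamma\leq\delta\cdot k$, so $k$ lifts (uniquely) to a morphism $(Q,\gamma)\to(C,\delta)$ in $\TopX$.

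For $(\Rightarrow)$, I assume that $q\colon(B,\beta)\to(Q,\gamma)$ is a coequaliser in $\TopX$; being a morphism of $\TopX$ already gives $\beta\leq\gamma\cdot q$. To establish the universal property of the Kan extension, I take any continuous $\gamma'\colon Q\to X$ with $\beta\leq\gamma'\cdot q$. Then $q\colon(B,\beta)\to(Q,\gamma')$ is a morphism of $\TopX$ coequalising $f,g$, so the coequaliser property supplies a unique morphism $k\colon(Q,\gamma)\to(Q,\gamma')$ in $\TopX$ with $k\cdot q=q$. Since $q$ is an epimorphism in $\Top$, this forces $k=\id_Q$, and the fact that $\id_Q$ is a morphism $(Q,\gamma)\to(Q,\gamma')$ of $\TopX$ says exactly $\gamma\leq\gamma'$, as required.
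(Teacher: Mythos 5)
Your argument is correct. Both directions are handled properly: in $(\Leftarrow)$ you correctly reduce the lifting of the comparison map $k$ to the universal property of $\Lan_q(\beta)$ applied to $\delta\cdot k$, and uniqueness of $k$ in $\TopX$ is automatic since the forgetful functor to $\Top$ is faithful; in $(\Rightarrow)$ the passage from $k\cdot q=q$ to $k=\id_Q$ via the epimorphy of the coequaliser $q$ is exactly the right device, and $\id_Q$ being a $\TopX$-morphism $(Q,\gamma)\to(Q,\gamma')$ is precisely $\gamma\leq\gamma'$, which together with the unit inequality $\beta\leq\gamma\cdot q$ characterises $\gamma$ as the order-enriched left Kan extension. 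The paper does not give this argument: it simply cites a general theorem of Clementino--Lucatelli Nunes on coequalisers in lax comma $2$-categories, of which this lemma is the $\Ord$-enriched instance for $\TopX$. Your proof is therefore a self-contained, elementary verification of that special case; what the citation buys is generality (the same statement over an arbitrary base $2$-category), while your version buys the reader a direct check that uses nothing beyond the two universal properties and the fact that hom-orders and composition in $\TopX$ are inherited from $\Top$. The two routes are mathematically the same bookkeeping, so there is nothing to repair.
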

\begin{proof}
 See \cite[Theorem~9.3, version 1 of arXiv]{CN24}.
\end{proof}

\begin{corollary}
 The category \(\TopX\) has and the functor \(\TopX\to\Top\) preserves coequalisers if and only if, for every continuous map \(\beta \colon B\to X\) and every quotient map \(q \colon B\to Q\), the left Kan extension \(\Lan_q(\beta)\colon Q\to X\) of \(\beta\) along \(q\) exists.
\end{corollary}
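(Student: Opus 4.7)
Both implications follow from the lemma, used in opposite directions.

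For the ``if'' direction, I would fix any parallel pair $f,g \colon (A,\alpha) \to (B,\beta)$ in $\TopX$ and form its coequaliser $q \colon B \to Q$ in $\Top$; since coequalisers in $\Top$ are precisely quotient maps, $q$ is a quotient. By hypothesis, $\gamma := \Lan_q(\beta)$ exists, and the lemma then asserts directly that $q \colon (B,\beta) \to (Q,\gamma)$ is a coequaliser of $f,g$ in $\TopX$; its underlying continuous map is $q$, so $G$ preserves it.

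For the ``only if'' direction, I assume $\TopX$ has coequalisers and that $G$ preserves them, and fix $\beta \colon B \to X$ together with a quotient $q \colon B \to Q$. The strategy is to realise $q$ as the $\Top$-coequaliser of some parallel pair in $\TopX$ whose codomain is $(B,\beta)$. Once such a pair is exhibited, the assumption furnishes a $\TopX$-coequaliser for it, the preservation property ensures that its underlying map is $q$, and the lemma, read in the reverse direction, forces the structure map on $Q$ to be $\Lan_q(\beta)$, giving its existence.

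To produce the pair, I would take the kernel pair $\pi_1,\pi_2 \colon K \to B$ with $K = B \times_Q B$: since $q$ is a regular epimorphism in $\Top$, it is the coequaliser of $\pi_1,\pi_2$. To lift this to $\TopX$ one needs a continuous map $\alpha \colon K \to X$ with $\alpha \leq \beta\pi_1$ and $\alpha \leq \beta\pi_2$ in the natural order. The main obstacle I anticipate is exactly the existence of such a common lower bound $\alpha$ in general: when $X$ carries no useful meet or bottom structure it is not obvious that any $\alpha$ qualifies, and one must use the hypothesis on $\TopX$ itself to produce $\alpha$ (or replace $K$ by a more flexible parallel pair whose $\Top$-coequaliser is still $q$). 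Everything else is bookkeeping between the lemma, the characterisation of quotient maps as $\Top$-coequalisers, and the preservation of coequalisers by $G$.
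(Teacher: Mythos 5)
Your ``if'' direction is complete and is exactly the argument the paper intends: the corollary is presented as an immediate consequence of the preceding lemma, and in that direction it is --- the \(\Top\)-coequaliser \(q\) of any parallel pair is a quotient map, the hypothesis supplies \(\Lan_q(\beta)\), and the lemma turns \(q\) into a coequaliser in \(\TopX\) whose underlying map is \(q\), so \(G\) preserves it.

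The ``only if'' direction is where the content lies, and the obstacle you flag is not a loose end you simply failed to tie up: it is a genuine gap, and in the stated generality it cannot be closed. To apply the lemma you must exhibit the given quotient \(q\colon B\to Q\) as the \(\Top\)-coequaliser of a pair \(f,g\colon(A,\alpha)\to(B,\beta)\) living in \(\TopX\), which forces \(\alpha\leq\beta f\) and \(\alpha\leq\beta g\); for the kernel pair this means a continuous choice of common lower bounds of \(\beta(b)\) and \(\beta(b')\) whenever \(q(b)=q(b')\), and for a bare T0-space \(X\) such lower bounds need not exist at all. Concretely, let \(X\) be the two-point discrete space. Its natural order is discrete, so \(\TopX=\Comma{\Top}{X}\), and the forgetful functor from a strict slice creates coequalisers (for a parallel pair one has \(\beta f=\alpha=\beta g\), so \(\beta\) factors through the \(\Top\)-coequaliser); hence the left-hand side of the corollary holds. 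Yet for \(\beta=1_X\) and the quotient \(q\colon X\to 1\) there is no continuous \(\delta\colon 1\to X\) with \(\beta\leq\delta\cdot q\), so \(\Lan_q(\beta)\) does not exist and the right-hand side fails. Thus the ``only if'' implication needs an additional hypothesis on \(X\): with a bottom element one takes \(\alpha=\bot\) on the kernel pair, and with continuous binary meets one takes \(\alpha=\beta\pi_1\wedge\beta\pi_2\); these are precisely the situations (spaces with \(\bot\), topological \(\wedge\)-semilattices) in which the rest of the paper operates. With either assumption made explicit your argument closes exactly as you describe; without one of them, the step you singled out is not bookkeeping but the point at which the statement itself breaks.
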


Recall that \(\TopX\to\Top\) is a left adjoint whenever \(X\) has a top element, hence, under this condition, \(\TopX \to \Top\) preserves all existing colimits.
Those spaces admitting left Kan extensions along arbitrary continuous maps are characterised in \cite[Theorem~2.4]{Erk98}:
\begin{theorem}
  \label{d:thm:4}
  The T0-space \(X\) admits left Kan extensions along arbitrary continuous maps if and only if \(X\) is sober, complete with respect to the natural order, and the map \(\wedge \colon X\times X\to X\) is continuous.
\end{theorem}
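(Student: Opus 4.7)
The plan is to follow Erné's strategy, establishing the two implications separately. For necessity, we construct, for each of the three properties of $X$, a specific Kan extension problem whose solvability forces the property. For sufficiency, we define the Kan extension explicitly by a supremum formula and prove its continuity using all three hypotheses.

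For the necessity direction, suppose $X$ admits left Kan extensions along arbitrary continuous maps. Completeness follows by taking any family $(x_i)_{i\in I}$, equipping $I$ with the discrete topology (which is T0), setting $\beta(i)=x_i$, and applying the Kan extension of $\beta$ along the unique $q\colon I\to\{\ast\}$: the universal property tested against constant maps $\{\ast\}\to X$ forces $\Lan_q(\beta)(\ast)=\bigvee_i x_i$ in the natural order. Sobriety is obtained by considering, for each irreducible closed set $C\subseteq X$, the Kan extension of the inclusion $C\hookrightarrow X$ (with subspace topology) along $C\to\{\ast\}$, which produces and singles out the generic point of $C$, with uniqueness coming from the universal property. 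Continuity of $\wedge$ is obtained by choosing a test diagram that pairs two points and identifies them via a quotient map, so that the existence of the Kan extension compels a continuous binary meet.

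For the sufficiency direction, assume $X$ is sober, complete in the natural order, and has continuous meet. Given continuous $\beta\colon B\to X$ and $q\colon B\to Q$, the candidate Kan extension is
\begin{equation*}
  \gamma(y):=\bigvee\{\beta(b):q(b)\leq y\},
\end{equation*}
which exists in $X$ by completeness. The universal property is immediate: $\beta\leq \gamma\circ q$ holds by construction, and any continuous $\delta\colon Q\to X$ with $\beta\leq \delta\circ q$ satisfies $\gamma\leq \delta$ pointwise, since $\delta$ is monotone (as a continuous map between T0-spaces) and dominates each $\beta(b)$ appearing in the defining supremum.

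The main obstacle will be showing that $\gamma$ is continuous. The plan is to exploit sobriety of $X$ to ensure convergence of the relevant filters on $X$ to unique points, and continuity of $\wedge$ to control the topological behaviour of finite interactions among the suprema. Concretely, one characterises the open sets of $X$ in terms of the lattice operations — the three hypotheses place $X$ in the framework of continuous lattices equipped with a Scott-like topology — and then verifies that $\gamma^{-1}(U)$ is open in $Q$ for every open $U\subseteq X$, by pulling this characterisation back along $q$ using continuity of $\beta$. This continuity step is the technical heart of the argument and is the point where all three hypotheses on $X$ must be used simultaneously.
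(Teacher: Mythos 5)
The paper offers no proof of this statement: it is quoted directly from Erker \cite[Theorem~2.4]{Erk98}, so your blind attempt has to be judged on its own terms, and it contains genuine errors in both directions. In the sufficiency direction your candidate \(\gamma(y)=\bigvee\{\beta(b):q(b)\leq y\}\) is not the left Kan extension, and the continuity you defer as ``the technical heart'' is in fact false for this candidate. Take \(X=\bkS\), let \(Q=\{0\}\cup\{1/n: n\geq 1\}\) with the Euclidean subspace topology, let \(B=\{1/n: n\geq 1\}\) with the discrete topology, let \(q\) be the inclusion and \(\beta\equiv 1\). Since \(Q\) is T1 its natural order is discrete, so \(\gamma^{-1}(1)=\{1/n:n\geq 1\}\), which is not closed in \(Q\); hence \(\gamma\) is not continuous, whereas the left Kan extension exists and is the constant map \(1\), the indicator of \(\overline{q(B)}\). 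The correct pointwise formula is the one recorded in Remark~\ref{d:rem:1}, namely \(\Lan_q\beta(y)=\bigwedge_{V\in\mathcal{O}(y)}\bigvee_{b\in q^{-1}(V)}\beta(b)\); your formula replaces the open neighbourhoods of \(y\) (which are down-closed but strictly larger than \({\downarrow}y\) in general) by the down-set of \(y\), and so systematically undershoots. Accordingly, your verification of the universal property only shows that \(\gamma\) is a pointwise lower bound for every competitor \(\delta\); it does not show that \(\gamma\) is itself a competitor, which it is not.

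In the necessity direction, only the completeness argument is sound. For sobriety, the left Kan extension of the inclusion \(C\hookrightarrow X\) along \(C\to 1\) computes \(\bigvee C\), the least upper bound of \(C\): but in the natural order \(\overline{\{x\}}={\uparrow}x\), so an irreducible closed set \(C\) (which is up-closed) has a generic point precisely when it has a \emph{minimum}, and your construction produces an element on the wrong side of \(C\) that in general does not even lie in \(C\). For the continuity of \(\wedge\), the test diagram you describe (two points identified by a quotient) again yields the \emph{join} \(x\vee y\) via the Kan extension; left Kan extensions compute suprema, and no mechanism in your sketch extracts the meet map \(X\times X\to X\), let alone its continuity. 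A workable route --- and essentially the one behind the cited result --- is to show that the left Kan extension of \(1_X\) along the unit \(e_X\colon X\to VX\) of the lower Vietoris monad equips \(X\) with a \(V\)-algebra structure, and then to invoke the Hoffmann--Schalk characterisation of \(V\)-algebras as the sober topological \(\wedge\)-semilattices (Theorems~\ref{d:thm:1} and~\ref{d:thm:3}), which delivers sobriety and the continuity of \(\wedge\) simultaneously.
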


Below we analyse the conditions for the existence of left Kan extensions of the theorem above more in detail, starting with some notation.
We call a T0-space \(X\) a \emph{topological \(\wedge\)-semilattice} whenever \(X\) is a \(\wedge\)-semilattice with respect to the natural order and the map
\begin{displaymath}
  \wedge \colon X\times X \longrightarrow X
\end{displaymath}
is continuous.
Similarly, we call \(X\) a \emph{topological \(\bigwedge\)-semilattice} whenever the ordered set \(X\) is complete and, for every set \(I\), the map \(\bigwedge \colon X^I\to X\) is continuous.
\begin{remark}
  The space \(X\) has continuous infima of all families indexed by the set \(I\) if and only if the diagonal map \(\Delta_X \colon X\to X^I\) has a right adjoint in the ordered category \(\Top\).
\end{remark}
The following result is due to \cite[Theorem~2.6]{Hof79a} and \cite[Section~6.3]{Sch93}.
\begin{theorem}
  \label{d:thm:1}
  The following assertions are equivalent.
  \begin{tfae}
  \item\label{d:item:7} The topological space \(X\) is a topological \(\bigwedge\)-semilattice.
  \item\label{d:item:6} The topological space \(X\) is sober and a topological \(\wedge\)-semilattice.
  \end{tfae}
\end{theorem}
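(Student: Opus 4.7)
The plan is to prove the two implications of the theorem separately.

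For \textit{(i)} $\Rightarrow$ \textit{(ii)}, continuity of the binary meet is the $I = \{1,2\}$ case of the hypothesis. For sobriety, let $C \subseteq X$ be irreducible closed; since $\overline{\{x\}} = \uparrow x$ in the natural order, closed sets in $X$ are upward closed and open sets downward closed. I first show $C$ is closed under binary meets: if $c_1, c_2 \in C$ but $c_1 \wedge c_2 \in U$ for some open $U$ disjoint from $C$, continuity of $\wedge$ supplies open neighbourhoods $V_1, V_2$ of $c_1, c_2$ with $V_1 \times V_2 \subseteq \wedge^{-1}(U)$; irreducibility then produces some $c \in V_1 \cap V_2 \cap C$, and $c = c \wedge c \in U \cap C$ is a contradiction. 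Inductively, the set $D$ of finite meets of elements of $C$ lies in $C$, is filtered, and satisfies $\xi := \bigwedge D = \bigwedge C$. Applying continuity of $\bigwedge \colon X^D \to X$ at the point $(d)_{d \in D}$, unpacking a basic open neighbourhood in $\bigwedge^{-1}(U)$ for $U \ni \xi$, and using $\top \in X$ (from completeness) to absorb coordinates outside a finite subset, one obtains a finite meet $\bigwedge_{d \in F} d \in U \cap D$; downward closedness of $U$ then shows the filtered family $(d)_{d \in D}$ is eventually in $U$, so it converges to $\xi$, and closedness of $C$ gives $\xi \in C$. Since $C$ is upward closed and $\xi$ is a lower bound of $C$, we conclude $C = \uparrow \xi = \overline{\{\xi\}}$, so $X$ is sober.

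For \textit{(ii)} $\Rightarrow$ \textit{(i)}, I first observe that sobriety delivers all filtered infima in the natural order: for filtered $F \subseteq X$, a cofinality argument using that each $\uparrow f = \overline{\{f\}}$ is irreducible shows that $\overline{\bigcup_{f \in F} \uparrow f}$ is itself irreducible closed, and its unique generic point is readily seen to be $\bigwedge F$. Combined with binary meets, arbitrary infima then arise as
\[
    \bigwedge S = \bigwedge \{ s_1 \wedge \cdots \wedge s_n : n \geq 1,\; s_i \in S \},
\]
a filtered infimum of elements of the $\wedge$-subsemilattice generated by $S$. For continuity of $\bigwedge \colon X^I \to X$, the key is a Scott-type property of opens in a sober topological $\wedge$-semilattice: any open containing a filtered infimum already contains some term of the family. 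Granted this, given $\bigwedge_{i \in I} x_i \in U$ open, one selects a finite $F \subseteq I$ with $\bigwedge_{i \in F} x_i \in U$, pulls this back to a product neighbourhood of $(x_i)_{i \in F}$ via continuity of the finite meet, and uses downward closedness of $U$ to absorb the remaining coordinates, producing the required open neighbourhood of $(x_i)_{i \in I}$ in $X^I$.

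The main obstacle is exactly this Scott-type interaction of opens with filtered infima invoked in the second implication: sobriety alone yields existence of filtered infima as generic points, but establishing that opens behave well with respect to them under only the assumption of continuous \emph{binary} meet is the substantive step, and is where the cited references \cite{Hof79a} and \cite{Sch93} carry the technical weight.
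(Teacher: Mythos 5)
Your proposal is correct in substance and follows the same underlying route as the paper's proof, which for this theorem consists essentially of a citation to \cite{Hof79a} and \cite{Sch93} together with a two-line sketch of \textit{(ii)}\(\implies\)\textit{(i)}: sober spaces admit codirected infima with respect to the natural order (by \cite{Wyl81a, Hof79a}, see also \cite{Joh86}), the codirected-infimum map is continuous on monotone families, and arbitrary infima are codirected infima of finite meets. You in fact supply more detail than the paper does: you give a genuine argument for \textit{(i)}\(\implies\)\textit{(ii)} (irreducible closed sets are closed under binary meets, and their generic point is produced as the infimum via continuity of \(\bigwedge\colon X^{D}\to X\)), a direction the paper leaves entirely to the references, and you prove rather than cite the existence of codirected infima in a sober space.

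Two remarks. First, the step you single out as ``the main obstacle'' --- that an open set containing a codirected infimum must already contain some member of the family --- is not where the technical weight lies: it is an immediate by-product of your own construction and uses sobriety alone, with no appeal to continuity of \(\wedge\). Indeed, if \(F\) is codirected and \(\xi=\bigwedge F\) is the generic point of the irreducible closed set \(C'=\overline{\bigcup_{f\in F}{\uparrow}f}\), then \(\xi\in C'\), so every open \(U\ni\xi\) meets \(\bigcup_{f\in F}{\uparrow}f\), and down-closedness of \(U\) places some \(f\in F\) inside \(U\). With this observation your second implication closes without any further input from the references. Second, a convention point: your formula \(\bigwedge S=\bigwedge\{s_1\wedge\dots\wedge s_n : n\geq 1,\ s_i\in S\}\) does not produce the empty infimum, so completeness requires a top element, and this must come from reading ``\(\wedge\)-semilattice'' as including the empty meet (as the paper implicitly does in Proposition~\ref{d:prop:2} and Corollary~\ref{d:cor:1}); the convention is not vacuous, since \(\bkN\) with the opens \(\{0,\dots,n-1\}\) is sober with continuous binary infima of the natural order but has no top. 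With these two points addressed, your argument is complete.
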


\begin{proof}
  Regarding \ref{d:item:6}\(\implies\)\ref{d:item:7}, just observe that, by \cite{Wyl81a, Hof79a} (see also \cite[Lemma~II.1.9]{Joh86}), every sober space admits codirected infima with respect to the natural order, and, for every codirected ordered set \((I,\leq)\), the map
  \begin{displaymath}
    X^I\supseteq
    \{(x_i)_{i\in I}\in X^I\mid I\to X,\,i\mapsto x_i,\text{ is monotone}\}
    \xrightarrow{\quad\bigwedge\quad}X
  \end{displaymath}
  is continuous, with the domain considered as a subspace of the product space \(X^I\).
\end{proof}

With respect to the natural order of a space, every open subset is down-closed, likewise, every closed subset is up-closed. Given now an ordered set \(Z\), there are various topologies on the set \(Z\) which induce the given order. Here we list three such topologies (see \cite{GHK+03}):
\begin{enumerate}
\item the \emph{lower topology} on \(Z\), where the closed sets are generated by the sets of the form \({\uparrow}a=\{z\in Z\mid a\leq z\}\), with \(a\in Z\),
\item the \emph{lower Scott topology} on \(Z\), where \(A\subseteq Z\) is closed whenever \(A\) is up-closed and stable under codirected infima, and
\item the \emph{lower Alexandroff topology} on \(Z\), where \(A\subseteq Z\) is closed whenever \(A\) is up-closed.
\end{enumerate}
Clearly, the lower topology is included in the lower Scott topology which in turn is included in the lower Alexandroff topology. Furthermore, the lower topology is the coarsest topology inducing the given order, while the lower Alexandroff topology is the largest such topology. A topological space is called an \emph{Alexandroff space} whenever its topology coincides with the lower Alexandroff topology of its natural order; equivalently, whenever arbitrary unions of closed subsets are closed.

\begin{lemma}\label{m:adjoint}
  Assume that \(X\) is equipped with the lower topology with respect to its natural order.
  \begin{enumerate}
  \item\label{d:item:20} Given a space \(Y\) and a pair of adjoint maps \((X\xrightarrow{f}Y)\dashv(Y\xrightarrow{g}X)\) in \(\Ord\), with respect to the natural order of \(X\) and \(Y\), \(g\) is continuous.
  \item\label{d:item:21} If the natural order on \(X\) is complete, then \(X\) is a topological \(\bigwedge\)-semilattice; in particular, \(X\) is sober.
  \end{enumerate}
\end{lemma}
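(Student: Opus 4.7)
The plan is to prove (1) by a direct computation with the closed subbasis of the lower topology, and then to deduce (2) from (1) together with the remark preceding the lemma and Theorem \ref{d:thm:1}.

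For (1), recall that the lower topology on \(X\) has as a subbasis of closed sets the principal up-sets \({\uparrow}a=\{x\in X\mid a\leq x\}\), with \(a\in X\). To show that \(g\) is continuous, it therefore suffices to verify that \(g^{-1}({\uparrow}a)\) is closed in \(Y\) for every \(a\in X\). Using the adjunction \(f\dashv g\), I compute
\[
  g^{-1}({\uparrow}a)
  =\{y\in Y\mid a\leq g(y)\}
  =\{y\in Y\mid f(a)\leq y\}
  ={\uparrow}f(a).
\]
The key observation is now that, with respect to the natural order of \emph{any} topological space \(Y\), the principal up-set \({\uparrow}y_0\) coincides with the topological closure \(\overline{\{y_0\}}\): by the convention \(x\leq y\iff\mathcal{O}(y)\subseteq\mathcal{O}(x)\), one has \(x\in\overline{\{y_0\}}\) precisely when every open neighbourhood of \(x\) contains \(y_0\), i.e.\ when \(y_0\leq x\). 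Hence \(g^{-1}({\uparrow}a)={\uparrow}f(a)=\overline{\{f(a)\}}\) is closed in \(Y\), which proves (1).

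For (2), I first note that the natural order on the product space \(X^I\) coincides with the pointwise order: from the subbasic opens \(\pi_j^{-1}(U)\) one reads off that \((x_i)\leq(y_i)\) in \(X^I\) iff \(x_j\leq y_j\) in \(X\) for every \(j\in I\). Under this pointwise order, the completeness hypothesis provides a right adjoint \(\bigwedge\colon X^I\to X\) to the diagonal \(\Delta_X\colon X\to X^I\) in \(\Ord\), since \(\Delta_X(x)\leq(y_i)\iff x\leq y_i\text{ for all }i\iff x\leq\bigwedge_i y_i\). Applying (1) to this adjoint pair (with \(Y=X^I\)) yields that \(\bigwedge\colon X^I\to X\) is continuous; thus \(\Delta_X\dashv\bigwedge\) in the ordered category \(\Top\), which, by the remark preceding the lemma, says exactly that \(X\) is a topological \(\bigwedge\)-semilattice. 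Sobriety of \(X\) is then the implication \ref{d:item:7}\(\implies\)\ref{d:item:6} of Theorem \ref{d:thm:1}.

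I do not foresee a real obstacle: the crux of the argument is the identification \({\uparrow}a=\overline{\{a\}}\) in any space (for our sign convention of the natural order), which makes the preimage of a subbasic closed set under the right adjoint \(g\) automatically closed; everything else is bookkeeping with adjunctions and the natural order on products.
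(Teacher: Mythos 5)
Your proof is correct and follows essentially the same route as the paper: for (1) the identical computation \(g^{-1}({\uparrow}a)=\{y\mid f(a)\leq y\}={\uparrow}f(a)\), and for (2) the observation that \(\bigwedge\colon X^I\to X\) is right adjoint to the diagonal, so (1) applies. Your extra step identifying \({\uparrow}f(a)=\overline{\{f(a)\}}\) in the arbitrary space \(Y\) merely makes explicit why that preimage is closed, a point the paper leaves implicit.
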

\begin{proof}
  \ref{d:item:20} Under these conditions, for every \(x\in X\):
  \begin{displaymath}
    g^{-1}({\uparrow} x)
    =\{ y\in Y\mid x\leq g(y)\}=\{y\in Y\mid f(x)\leq y\}=\uparrow f(x).
  \end{displaymath}
  \ref{d:item:21} follows from \ref{d:item:20}, since, for every set \(I\), the map \(\bigwedge \colon X^I\to X\) is right adjoint to the diagonal \(\Delta\colon X\to X^I\).
\end{proof}

\begin{remark}
  Our naming scheme for the various topologies on an ordered set \(Z\)
  reflects the fact that we declare certain down-closed (also called
  \emph{lower}) subsets of \(Z\) to be open. There are also the somehow dual
  topologies on an ordered set \(Z\), namely the upper topology, the upper
  Scott topology and the upper Alexandroff topology which are defined so that
  the upper (Scott, Alexandroff) topology of \(Z\) coincides with the lower
  (Scott, Alexandroff) topology of the dual ordered set \(Z^{\mathrm{op}}\).
  We note that these topologies have the dual order of \(Z\) as natural order,
  which of course means that the specialisation order of these topologies is
  the given order. For instance, a topological space carries the lower Scott
  topology with respect to the natural order if and only if it carries the
  upper Scott topology with respect to the specialisation order.
\end{remark}

\cite{Sco72} has characterised the injective T0-spaces as precisely the
continuous lattices with respect to the specialisation order equipped with the
upper Scott topology; hence, the op-continuous lattices with respect to the
natural order equipped with the lower Scott topology. We recall that an
\emph{op-continuous lattice} is a complete lattice \(Z\) so that every element
is the meet of its way above elements -- herein, we say \(x\) is \emph{way
above} \(y\), denoted \(x\gg y\), if, whenever \(S\) is a codirected subset of
\(Z\) with \(y\geq\bigwedge S\), there exists \(s\in S\) such that \(x\geq
s\). We recall that a topological (T0-)space \(Z\) is said to be
\emph{injective} if, for every embedding \(f\colon A\to B\) and every
continuous map \(g\colon A\to Z\), there exists a continuous map
\(\overline{g}\colon B\to Z\) making the following diagram commute
\[
  \begin{tikzcd}
    A \ar[rd,swap,"g"] \ar[rr,"f"]
    && B \ar[ld,"\overline{g}"] \\
    & Z
  \end{tikzcd}
\]

\begin{lemma}
  Assume that \(X\) is an injective space. Then \(X\) is a topological \(\bigwedge\)-semilattice (with respect to the natural order). Moreover, the map \(\vee \colon X\times X\to X\) is continuous.
\end{lemma}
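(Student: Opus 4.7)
The plan is to invoke Scott's characterisation of injective T0-spaces, which is recalled in the excerpt just above the statement, together with Theorem~\ref{d:thm:1} and standard results from the theory of continuous lattices. Since \(X\) is injective, \(X\) is an op-continuous lattice in its natural order, and its topology is the lower Scott topology. In particular, \(X\) is a complete lattice, so arbitrary infima exist, and \(X\) is sober, because every continuous lattice equipped with its Scott topology is sober.

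Given sobriety and completeness, Theorem~\ref{d:thm:1} reduces the claim that \(X\) is a topological \(\bigwedge\)-semilattice to verifying that the binary meet \(\wedge\colon X\times X\to X\) is continuous. I would handle both continuity statements --- for \(\wedge\) and for \(\vee\) --- simultaneously by passing to the dual: \(X^{\mathrm{op}}\) is a continuous lattice in the classical sense equipped with its (upper) Scott topology, and under this duality \(\wedge_{X}\) corresponds to \(\vee_{X^{\mathrm{op}}}\), while \(\vee_{X}\) corresponds to \(\wedge_{X^{\mathrm{op}}}\). For the former, the join of a complete lattice commutes with arbitrary suprema, hence with directed ones, so \(\vee_{X^{\mathrm{op}}}\) preserves directed suprema separately in each variable; combined with the well-known fact that on a continuous lattice the Scott topology of the product coincides with the product of the Scott topologies, this yields joint Scott continuity. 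For the latter, we invoke the classical result (see e.g.\ \cite{GHK+03}) that binary meet on a continuous lattice is jointly Scott continuous. Translating the conclusions back to \(X\) gives continuity of \(\wedge\colon X\times X\to X\) and \(\vee\colon X\times X\to X\) with respect to the lower Scott topology, and therefore, via Theorem~\ref{d:thm:1}, the result.

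I expect the only real obstacle to be bookkeeping, not mathematics: because the paper adopts as natural order the \emph{dual} of the usual specialisation order, one has to consistently swap ``continuous/op-continuous'', ``upper Scott/lower Scott'', and ``\(\wedge\)/\(\vee\)'' when transferring results from \cite{GHK+03}. Once this dictionary is set up, both required continuity facts are direct translations of standard theorems about continuous lattices.
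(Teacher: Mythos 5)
Your proposal is correct and follows the same overall skeleton as the paper's proof: establish sobriety and continuity of the two binary operations, then invoke Theorem~\ref{d:thm:1} to upgrade from topological \(\wedge\)-semilattice to topological \(\bigwedge\)-semilattice. The difference lies in how the sub-steps are justified. The paper cites \cite[Proposition~2.7]{Sco72} directly for the joint continuity of both \(\wedge\) and \(\vee\) on an injective space, and obtains sobriety from the fact that every injective space is a retract of a power of the Sierpinski space. You instead route everything through Scott's characterisation of injective spaces as (op-)continuous lattices with the (lower) Scott topology, and then derive the continuity of the operations from domain-theoretic facts in \cite{GHK+03}: preservation of directed suprema in each variable separately, together with the coincidence of the Scott topology on the product with the product of the Scott topologies for continuous lattices (this last point is the one genuinely non-trivial ingredient your argument needs, and you correctly flag it). Your sobriety argument (the Scott topology on a continuous lattice is sober) is likewise standard. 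Both routes are citation-based; the paper's is shorter because Scott's Proposition~2.7 packages the continuity claims in exactly the needed form, while yours is more self-contained on the lattice-theoretic side at the cost of the order-duality bookkeeping you describe. No gaps.
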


\begin{proof}
  In \cite[Proposition~2.7]{Sco72} it is shown that, for any injective space \(X\), both maps \(\wedge\colon X\times X\to X\) and \(\vee\colon X\times X\to X\) are continuous. Moreover, every injective space is a retract of a product of copies of the Sierpinski space, hence it is sober. By Theorem~\ref{d:thm:1}, \(X\) is a topological \(\bigwedge\)-semilattice as claimed.
\end{proof}

\begin{examples}\label{mm:examples1}
  \begin{enumerate}
  \item Let \(X=\bkS\) be the Sierpinski space, with \(\{1\}\) the only non-trivial closed subset. Clearly \(0\leq 1\) for the natural order, which is both a continuous and an op-continuous lattice. Here, as in any finite topological space, the three topologies considered above coincide.

    The category \(\LaxComma{\Top}{\bkS}\) may be equivalently described as the category with objects pairs \((A,A_0)\) where \(A\) is a topological space and \(A_0\) is a closed subset of \(A\), and with morphisms \(f \colon(A,A_0)\to(B,B_0)\) those continuous maps \(f \colon A\to B\) with \(f(a)\in B_0\) for all \(a\in A_0\), that is, \(f\) restricts to \(f_0 \colon A_0\to B_0\).
    \begin{displaymath}
      \begin{tikzcd}
        A_{0} %
        \ar{r}{f_{0}} %
        \ar[>->]{d}{} %
        & B_{0} %
        \ar[>->]{d}{} \\
        A %
        \ar{r}[swap]{f} %
        & B %
      \end{tikzcd}
    \end{displaymath}
  \item Consider \(X=\mathbb{N}\) with the order \(\preceq\) defined by \(n\preceq m\) if \(m\) divides \(n\) (and we write \(m\mid n\)). It is a complete lattice, with \(\top=1\), \(\bot=0\), and \(\bigvee_{i\in I}m_i=\gcd(m_i)\), \(\bigwedge_{i\in I} m_i=\lcm(m_i)\). Then \(X\) is an op-continuous lattice but it is not a continuous lattice: it is easily seen that if \(p^\alpha\mid m\), for \(p\) a prime number, then \(p^\alpha\gg m\), while, if \(m\) is not a prime number, then \(n\ll m\) only if \(n=0\). Again the lower topology coincides with the lower Scott topology. We note that in \cite{AMS25} the category \(\LaxComma{\Top}{\bkN}\), called there the category of \(a\)-spaces and \(a\)-maps, is used to obtain a generalization of Yosida-duality to norm-complete \(l\)-groups. The dual category is the full subcategory of \(\LaxComma{\Top}{\bkN}\) defined by compact Hausdorff spaces \(A\) with certain continuous maps \(A\to\bkN\).

    We point out that with the lower Alexandroff topology on \(\bkN\) we do not obtain a topological \(\bigwedge\)-semilattice: this space is not sober.

  \item Let \(X\) be the interval \([0,1]\) with the lower topology for the natural order on \([0,1]\); that is, \(U\subseteq [0,1]\) is closed if and only if \(U=\varnothing\) or \(U=[x,1]\) for some \(x\in [0,1]\). Clearly, \([0,1]\) is both a continuous and an op-continuous lattice, and closed subsets are stable under (codirected) infima, hence \(X\) carries the lower Scott topology, and, consequently, \(X\) is an injective space. We point out that this topology does not coincide with the lower Alexandroff topology: the intervals \(]x,1]\) are up-closed but not closed for the lower topology. Again, \[[0,1]\] with the lower Alexandroff topology is not a sober space.

      For \(X= [0,1]\) with the lower topology (= Scott topology), the category
      \(\TopX\) admits a similar description as for \(X=\bkS\). Firstly, for
      each set \(A\), there is a bijection
    \begin{displaymath}
      \{\alpha \colon A\to [0,1]\}
      \longrightarrow
      \{(A_u)_{u\in [0,1]}\mid \text{for all \(u\in [0,1]\): }
      A_u\subseteq A\;\text{and}\; A_u=\bigcap_{v<u}A_v\}
    \end{displaymath}
    sending \(\alpha \colon A\to [0,1]\) to the family \((\alpha^{-1}({\uparrow} u))_{u\in [0,1]}\). The inverse of the function above sends the family \((A_u)_{u\in [0,1]}\) to the map
    \begin{displaymath}
      \alpha \colon A \longrightarrow [0,1],\,
      a \longmapsto \sup\{u\in [0,1]\mid a\in A_u\}.
    \end{displaymath}
    Furthermore, if \(A\) is a topological space, then a map \(\alpha \colon A\to [0,1]\) is continuous if and only if the corresponding family \((A_u)_{u\in [0,1]}\) consists only of closed subsets of \(A\). Given also continuous maps \(\beta \colon B\to [0,1]\) (with corresponding family \((B_u)_{u\in [0,1]}\)) and \(f \colon A \to B\), then \(\alpha\leq\beta\cdot f\) if and only if, for all \(u\in [0,1]\),
    \begin{equation}
      \label{d:eq:5}
      \forall a\in A\,.\,(a\in A_u \implies f(a)\in B_u);
    \end{equation}
    that is, \(f\) restricts to a map \(f_u \colon A_u\to B_u\).
    \begin{displaymath}
      \begin{tikzcd} 
        A_{u} %
        \ar{r}{f_{u}} %
        \ar[>->]{d}[swap]{} %
        & B_{u} %
        \ar[>->]{d}{} \\
        A %
        \ar{r}[swap]{f} %
        & B %
      \end{tikzcd}
    \end{displaymath}
    Hence, we may consider \(\LaxComma{\Top}{[0,1]}\) as the category with objects pairs \((A,(A_u)_{u\in [0,1]})\) consisting of a topological space \(A\) and a family \((A_u)_{u\in [0,1]}\) of closed subsets of \(A\) so that, for all \(u\in [0,1]\), \(A_u=\bigcap_{v<u}A_v\), and with morphisms \(f \colon (A,(A_u)_{u\in [0,1]})\to (B,(B_u)_{u\in [0,1]})\) those continuous maps \(f \colon A \to B\) satisfying \eqref{d:eq:5} for all \(u\in [0,1]\).
  \item Consider the complete lattice \(X=\bkZ_-^{\infty}=\{k\in\bkZ\mid k\leq 0\}\cup\{-\infty\}\), with the usual order. Then it is easy to check that \(\bkZ_-^{\infty}\) is both a continuous and an op-continuous lattice. The way above relation is given by: \(k\gg j\) if \(k\geq j\) and \(k\gg -\infty\) for every \(k\in\bkZ_-\). Again, its lower topology coincides with the lower Scott topology but it is weaker than its lower Alexandroff topology: \(\bkZ_-\) is up-closed but not closed in the lower topology.
  \item Now let \(X=\bkZ_+^{\infty}=\{k\in\bkZ\mid k\geq 0\}\cup\{+\infty\}\), with the usual order. It is again, of course, both a continuous and an op-continuous lattice. However the situation here is different from the previous example; indeed, every non-empty up-set has a minimum, hence the lower topology and the lower Alexandroff topology coincide.
  \end{enumerate}
\end{examples}

\begin{remark}
  \label{d:rem:1}
  Assume now that \(X\) is an injective space. For an embedding \(q \colon B\to C\) and a continuous map \(\beta \colon B\to X\), as depicted in \eqref{r:eq:lkan}:
  \begin{equation}
    \label{r:eq:lkan}
    \begin{tikzcd}
      B \ar[rd,swap,"\beta"]
        \ar[rr,"q"]
        && C \ar[ld,dashed,"\Lan_q\beta"] \\
      & X
    \end{tikzcd}
  \end{equation}
  the extension \(\Lan_q\beta \colon C\to X\) of \(\beta\) along \(q\) is given by
  \begin{equation}
    \label{d:eq:1}
    c \longmapsto
    \bigwedge_{V\in \mathcal{O}(c)}\bigvee_{b\in q^{-1}(V)}\beta(b).
  \end{equation}
  By considering \(q=1_B \colon B\to B\), we obtain in particular that, for every \(b\in B\),
  \begin{displaymath}
    \beta(b)=
    \bigwedge_{V\in \mathcal{O}(b)}\bigvee_{b'\in V}\beta(b').
  \end{displaymath}
  Furthermore, \cite[Subsection~2.6]{Esc98} observed that the formula \eqref{d:eq:1} also defines the left Kan extension \(\Lan_q(\beta)\colon C\to X\) of \(\beta\) along an arbitrary continuous map \(q \colon B\to C\).
\end{remark}

Next we will expose a connection between topological \(\wedge\)-semilattices and two important constructions of topological spaces: the lower Vietoris space and the ultrafilter space.

The lower Vietoris functor $V\colon\Top \to\Top$ is defined by \(VZ=\{A\subseteq Z\mid A\text{ closed}\}\) with the ``hit topology'', that is, the topology generated by the sets \(\{A\in VZ\mid A\cap W\neq\varnothing\}\)  for all \(W\) open in \(Z\), and by \(Vf(A)=\overline{f[A]}\).
Furthermore, this functor is part of a monad on \(\Top\), with unit \(e \colon 1\to V\) defined by \(e_Z\colon Z\to VZ,\,z\mapsto\overline{\{z\}}\) and multiplication \(m \colon VV\to V\) given by union.
This construction goes back to the work \cite{Vie22}; for more information we refer to \cite{Mic51, Sch93, Sch93a}.
We note that \(A\leq B\) in the natural order of \(VZ\) if and only if \(A\supseteq B\), for all \(A,B\in VZ\).
Furthermore, a sub-basis for the closed subsets of \(VX\) is given by the sets
\begin{displaymath}
  \{A\in VZ\mid B\supseteq A\},
\end{displaymath}
for \(B\subseteq Z\) closed; that is, the lower Vietoris topology on \(VZ\) is the lower topology with respect to the containment order on \(VX\).

For a topological space \(Z\), we consider the set \(UZ\) of all ultrafilters on the set \(Z\) equipped with the topology generated by the basic open sets \(\{\mathfrak{x}\in UZ\mid B\in\mathfrak{x}\}\), with \(B\) open in \(Z\).
This construction extends the ultrafilter monad \((U,m,e)\) on \(\Set\) to a monad on \(\Top\); see \cite{HST14}, for instance. In particular, the unit
\begin{displaymath}
  e_Z \colon Z \longrightarrow UZ,\,z\longmapsto\doo{z}
\end{displaymath}
is continuous.
The natural order of the space \(UZ\) is described by
\begin{align*}
  \mathfrak{x}\leq \mathfrak{y}
  &\iff \forall B\subseteq Z\text{ open}\,.\,(B\in \mathfrak{y}\implies B\in \mathfrak{x})\\
  &\iff \forall A\subseteq Z\text{ closed}\,.\,(A\in \mathfrak{x}\implies A\in \mathfrak{y})\\
  &\iff \forall A\in \mathfrak{x}\,.\,\overline{A}\in \mathfrak{y},
\end{align*}
for all \(\mathfrak{x},\mathfrak{y}\in UZ\).
In particular, considering above \(\mathfrak{y}=\doo{z}\) the principal ultrafilter generated by \(z\in Z\), then \(\mathfrak{x}\leq\doo{z}\) if and only if \(\mathfrak{x}\leadsto z\).
For a topological space \(Z\) and a map \(\sigma \colon UZ\to Z\), one has \(e_Z\dashv\sigma\) in \(\Ord\) if and only if, for all \(z\in Z\) and \(\mathfrak{x}\in UX\),
\begin{displaymath}
  \mathfrak{x}\leq \doo{z}\iff \sigma(\mathfrak{x})\leq z.
\end{displaymath}
Therefore the monotone map \(e_Z \colon Z\to UZ\) has a left adjoint in \(\Ord\) if and only if every ultrafilter \(\mathfrak{x}\) on \(Z\) has a smallest convergence point with respect to the natural order of \(Z\). Below we collect some elementary properties of such spaces.

\begin{lemma}
  \begin{enumerate}
  \item Assume that \(e_Z \colon Z\to UZ\) has a left adjoint \(\sigma \colon UZ\to Z\) in \(\Ord\). Then also \(e_{Z\times Z} \colon Z\times Z\to U(Z\times Z)\) has a left adjoint in \(\Ord\), given by
    \begin{displaymath}
      U(Z\times Z)\xrightarrow{\qquad}UZ\times UZ
      \xrightarrow{\quad\sigma\times\sigma\quad}
      Z\times Z.
    \end{displaymath}
  \item Let \(f \colon Y\to Z\) be a continuous map between T0-spaces and assume that both \(Y\) and \(Z\) have smallest convergence points of ultrafilters. If \(f\) has a right adjoint in \(\Top\), then \(f\) preserves smallest convergence points of ultrafilters.
  \end{enumerate}
\end{lemma}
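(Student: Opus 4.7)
For item (1), I will verify the adjunction condition directly. Let \(c\colon U(Z\times Z)\to UZ\times UZ\) denote the canonical comparison map sending \(\mathfrak{w}\) to \((U\pi_{1}\mathfrak{w},U\pi_{2}\mathfrak{w})\), where \(\pi_{1},\pi_{2}\) are the product projections, and write \(\tau=(\sigma\times\sigma)\cdot c\). For \(\mathfrak{w}\in U(Z\times Z)\) and \((z_{1},z_{2})\in Z\times Z\), the chain of equivalences
\begin{align*}
\tau(\mathfrak{w})\leq(z_{1},z_{2})
&\iff \sigma(U\pi_{i}\mathfrak{w})\leq z_{i}\text{ for }i=1,2 \\
&\iff U\pi_{i}\mathfrak{w}\leadsto z_{i}\text{ for }i=1,2 \\
&\iff \mathfrak{w}\leadsto(z_{1},z_{2}) \\
&\iff \mathfrak{w}\leq e_{Z\times Z}(z_{1},z_{2})
\end{align*}
uses, in order, the componentwise ordering on \(Z\times Z\), the given adjunction \(\sigma\dashv e_{Z}\) applied in each factor, and the standard componentwise characterisation of ultrafilter convergence in a product space. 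This establishes \(\tau\dashv e_{Z\times Z}\).

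For item (2), let \(g\colon Z\to Y\) be the right adjoint of \(f\) in the ordered category \(\Top\), so \(\id_{Y}\leq g\cdot f\) and \(f\cdot g\leq\id_{Z}\), and write \(\sigma_{Y}\dashv e_{Y}\), \(\sigma_{Z}\dashv e_{Z}\) for the smallest-convergence-point adjunctions. Fix \(\mathfrak{y}\in UY\); since \(Z\) is T0, it suffices to prove \(f(\sigma_{Y}\mathfrak{y})=\sigma_{Z}(Uf\mathfrak{y})\) as two separate inequalities. The inequality \(\sigma_{Z}(Uf\mathfrak{y})\leq f(\sigma_{Y}\mathfrak{y})\) uses only continuity of \(f\): from \(\mathfrak{y}\leadsto\sigma_{Y}\mathfrak{y}\) in \(Y\) one obtains \(Uf\mathfrak{y}\leadsto f(\sigma_{Y}\mathfrak{y})\) in \(Z\), so \(f(\sigma_{Y}\mathfrak{y})\) lies above the smallest convergence point of \(Uf\mathfrak{y}\).

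For the reverse inequality, the plan is to transport the convergence \(Uf\mathfrak{y}\leadsto\sigma_{Z}(Uf\mathfrak{y})\) back to \(Y\) along \(g\): continuity of \(g\) together with functoriality of \(U\) gives \(U(gf)\mathfrak{y}\leadsto g(\sigma_{Z}(Uf\mathfrak{y}))\), so every open neighbourhood \(B\) of \(g(\sigma_{Z}(Uf\mathfrak{y}))\) satisfies \((gf)^{-1}(B)\in\mathfrak{y}\). The crucial step is to translate the adjunction inequality \(\id_{Y}\leq g\cdot f\) into the set-theoretic inclusion \((gf)^{-1}(B)\subseteq B\) for every open \(B\subseteq Y\): if \(gf(y)\in B\), then \(y\leq gf(y)\) together with the down-closure of opens in the natural order forces \(y\in B\). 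Hence \(B\in\mathfrak{y}\), proving \(\mathfrak{y}\leadsto g(\sigma_{Z}(Uf\mathfrak{y}))\); minimality then yields \(\sigma_{Y}\mathfrak{y}\leq g(\sigma_{Z}(Uf\mathfrak{y}))\), and the adjunction \(f\dashv g\) concludes \(f(\sigma_{Y}\mathfrak{y})\leq\sigma_{Z}(Uf\mathfrak{y})\).

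I do not anticipate a real obstacle. The only subtle point is the translation of \(\id_{Y}\leq g\cdot f\) into \((gf)^{-1}(B)\subseteq B\) for open \(B\); this inclusion is precisely what allows the convergence of \(U(gf)\mathfrak{y}\) to be upgraded to a convergence of \(\mathfrak{y}\) itself, without which the second inequality in (2) would not go through.
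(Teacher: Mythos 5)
Your proof is correct. For item (1) you argue exactly as the paper does --- convergence in \(Z\times Z\) is componentwise --- merely writing out the chain of equivalences in full. For item (2), however, you take a genuinely more elementary route. The paper's proof is a one-line mates argument: the naturality square \(e_Y\cdot g=Ug\cdot e_Z\) is a commuting square of right adjoints in \(\Ord\) (\(g\) right adjoint to \(f\); \(e_Y\), \(e_Z\) right adjoints to \(\sigma_Y\), \(\sigma_Z\); and \(Ug\) right adjoint to \(Uf\) since \(U\) is locally monotone), so the corresponding square of left adjoints \(f\cdot\sigma_Y=\sigma_Z\cdot Uf\) commutes, which is precisely the preservation statement. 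You instead establish the two inequalities by hand: \(\sigma_Z(Uf\mathfrak{y})\leq f(\sigma_Y\mathfrak{y})\) from continuity of \(f\), and the converse by pushing the convergence \(Uf\mathfrak{y}\leadsto\sigma_Z(Uf\mathfrak{y})\) back along \(g\) and observing that \(\id_Y\leq g\cdot f\) yields \((gf)^{-1}(B)\subseteq B\) for every open \(B\), by down-closedness of opens in the natural order. That inclusion is exactly the concrete content of the abstract facts that \(U\) preserves the adjunction \(f\dashv g\) and that \(e\) is natural; your version is self-contained and shows explicitly where continuity (not mere monotonicity) of the right adjoint \(g\) enters, while the paper's version is shorter and identifies the statement as an instance of the general principle that a commuting square of right adjoints has a commuting square of left adjoints as its mate.
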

\begin{proof}
  The first assertion follows immediately from the fact that convergence in \(Z\times Z\) is component-wise. Regarding the second assertion, let \(g \colon Z\to Y\) be the right adjoint of \(f\) in \(\Top\). Since the diagram
  \begin{displaymath}
    \begin{tikzcd}
      Z %
      \ar{r}{g} %
      \ar{d}[swap]{e_{Z}} %
      & Y %
      \ar{d}{e_{Y}} \\
      UZ %
      \ar{r}[swap]{Ug} %
      & UY %
    \end{tikzcd}
  \end{displaymath}
  of right adjoints commutes, the diagram of the corresponding left adjoints in \(\Ord\) commutes as well.
\end{proof}

The algebras for the lower Vietoris monad are characterised as the ``topological complete join-semilattices'' (with respect to specialisation order, which is dual to the order considered in this paper) in \cite[Theorem~2.6]{Hof79a} and in \cite[Section~6.3]{Sch93}. The theorem below summarises their results, and the connection with ``smallest convergence points of ultrafilters'' is described in \cite[Proposition~6.4 and Example~6.5]{Hof14}.

\begin{theorem}
  \label{d:thm:3}
  The following assertions are equivalent, for a T0-space \(Z\).
  \begin{tfae}
  \item\label{d:item:4} \(Z\) is an algebra for the lower Vietoris monad on \(\Top\).
  \item \(Z\) is a topological \(\bigwedge\)-semilattice.
  \item\label{d:item:9} \(Z\) is sober and a topological \(\wedge\)-semilattice.
  \item The ordered set \(Z\) is complete and the unit \(Z\to UZ,\,z\mapsto\doo{z}\) has a left adjoint in \(\Ord\).
  \item\label{d:item:5} The ordered set \(Z\) is finitely complete and the unit \(Z\to UZ,\,z\mapsto\doo{z}\) has a left adjoint in \(\Ord\).
  \end{tfae}
\end{theorem}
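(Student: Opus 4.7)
The plan is to close a cycle of implications, invoking Theorem~\ref{d:thm:1} for the equivalence (ii)$\iff$(iii). I would establish (i)$\iff$(ii), (ii)$\Rightarrow$(iv), (iv)$\Rightarrow$(v) (trivial), and the crucial (v)$\Rightarrow$(iii).

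For (i)$\iff$(ii), I identify $V$-algebras with topological $\bigwedge$-semilattices, exploiting the lax-idempotent character of the lower Vietoris monad: the structure map $\xi\colon VZ \to Z$ of a $V$-algebra is right adjoint to $e_Z$ in the ordered category $\Top$. Using that the natural order on $VZ$ is reverse inclusion, the adjunction $e_Z(z) \leq A \iff z \leq \xi(A)$ unfolds to ``$z$ is a lower bound of the closed set $A$ iff $z \leq \xi(A)$'', whence $\xi(A) = \bigwedge A$. Completeness of $Z$ then follows by setting $\bigwedge S = \xi(\overline{S})$, and continuity of $\bigwedge\colon Z^I \to Z$ follows from that of $\xi$ via the continuous map $Z^I \to VZ$, $(z_i) \mapsto \overline{\{z_i : i \in I\}}$. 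Conversely, setting $\xi(A) = \bigwedge A$ yields a continuous $V$-algebra structure, with continuity of $\xi$ checked against the sub-basic opens of the hit topology and the monad axioms following from the universal properties of meets.

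For (ii)$\Rightarrow$(iv), I define $\sigma(\mathfrak{x}) = \bigwedge\{z \in Z : \mathfrak{x} \leadsto z\}$ and show $\sigma(\mathfrak{x})$ is itself a convergence point of $\mathfrak{x}$. The key manoeuvre is to apply the continuous map $U\bigwedge\colon U(Z^I) \to UZ$ to the ultrafilter $U\Delta(\mathfrak{x})$ on $Z^I$, where $I$ indexes the convergence points of $\mathfrak{x}$: since $\bigwedge\cdot\Delta = \id$, the image is $\mathfrak{x}$ itself, while $U\Delta(\mathfrak{x})$ converges coordinate-wise to the tuple of convergence points. The adjunction $\sigma \dashv e_Z$ is then immediate, and completeness is given by Theorem~\ref{d:thm:1}.

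The main obstacle is (v)$\Rightarrow$(iii), which splits into two parts. For continuity of $\wedge\colon Z\times Z \to Z$: given an ultrafilter $\mathfrak{w}$ on $Z\times Z$ with projections $\mathfrak{x}, \mathfrak{y}$, the preceding lemma yields $\mathfrak{w} \leq e_{Z\times Z}(\sigma(\mathfrak{x}), \sigma(\mathfrak{y}))$; applying the monotone set-theoretic map $U\wedge$ together with the naturality identity $U\wedge \cdot e_{Z\times Z} = e_Z \cdot \wedge$ gives $U\wedge(\mathfrak{w}) \leq e_Z(\sigma(\mathfrak{x}) \wedge \sigma(\mathfrak{y}))$, whence $U\wedge(\mathfrak{w})$ converges to $\sigma(\mathfrak{x}) \wedge \sigma(\mathfrak{y})$, establishing continuity. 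For sobriety: given an irreducible closed $A \subseteq Z$, the collection of open sets meeting $A$ forms a filter base compatible with $A$ (by irreducibility), which extends to an ultrafilter $\mathfrak{x}$ with $A \in \mathfrak{x}$; one then verifies that the set of convergence points of $\mathfrak{x}$ equals $A$, so that $\sigma(\mathfrak{x}) \in A$ is a lower bound of $A$, yielding $\overline{\{\sigma(\mathfrak{x})\}} = A$. The sobriety argument is the most delicate point, as irreducibility must be used twice: once to ensure the filter base is actually a filter, and once again to pin down $\sigma(\mathfrak{x})$ as a genuine generic point rather than a point of a proper closed subset of $A$.
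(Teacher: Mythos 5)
The paper does not actually prove this theorem: it is presented as a summary of \cite[Theorem~2.6]{Hof79a}, \cite[Section~6.3]{Sch93} and \cite[Proposition~6.4, Example~6.5]{Hof14}, with only the remark following the statement sketching (v)\(\implies\)(iii). Your reconstruction is therefore necessarily a different route, and much of it is sound: the identification of the algebra structure map with \(\bigwedge\) via the adjunction \(e_Z\dashv\xi\) (using \(\overline{\{z\}}={\uparrow}z\)), the diagonal trick \(\bigwedge\cdot\Delta=\id\) for (ii)\(\implies\)(iv), and the sobriety argument (the convergence set of an ultrafilter refining \(\{V\cap A\mid V\text{ open},\ V\cap A\neq\varnothing\}\) is exactly \(A\), so \(\sigma(\mathfrak{x})\) is its generic point) are all correct.

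Two steps do not stand as written. First, in (v)\(\implies\)(iii) you invoke ``the monotone set-theoretic map \(U{\wedge}\)'': but \(U\) is only locally monotone on \emph{continuous} maps (monotonicity of \(Uf\) with respect to the natural orders of the ultrafilter spaces requires preimages of opens under \(f\) to be open), so asserting that \(U{\wedge}\) is monotone presupposes the continuity of \(\wedge\) that you are trying to establish. The conclusion is salvageable by a direct argument: for \(V\) an open neighbourhood of \(\sigma(\mathfrak{x})\) one has \(\pi_1^{-1}(V)\subseteq{\wedge}^{-1}(V)\), because \(V\) is down-closed and \(x\wedge y\leq x\); hence \(U{\wedge}(\mathfrak{w})\) converges to both \(\sigma(\mathfrak{x})\) and \(\sigma(\mathfrak{y})\), so its smallest convergence point lies below \(\sigma(\mathfrak{x})\wedge\sigma(\mathfrak{y})\leq a\wedge b\), and up-closedness of convergence sets finishes the proof. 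Second, in (ii)\(\implies\)(i) the continuity of \(\xi=\bigwedge\colon VZ\to Z\) for the hit topology is the genuinely hard content of the Hoffmann--Schalk theorem; it does not follow from the continuity of \(\bigwedge\colon Z^I\to Z\) via the map \(Z^I\to VZ\) (that map goes the wrong way, and \(VZ\) is not a subspace of a power of \(Z\)), and ``checked against the sub-basic opens'' conceals precisely the point where sobriety enters. As it stands, that direction is a citation in disguise -- which is what the paper does openly.
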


\begin{remark}
  Regarding the implication \ref{d:item:5}\(\implies\)\ref{d:item:9} in Theorem~\ref{d:thm:3}, the fact that every ultrafilter has a smallest convergence point implies that every irreducible closed set is the closure of some point (because every irreducible closed set is the set of limit points of some ultrafilter), and that the map \(\wedge \colon Z\times Z\to Z\) is continuous (see \cite[Example~6.5]{Hof14}).
\end{remark}

We turn now our attention to the study of limits in \(\TopX\). The following result is essentially \cite[Proposition~3.1]{Nie06}.

\begin{proposition}
  \label{d:prop:2}
  Let \(I\) be a set. Then the following assertions are equivalent:
  \begin{tfae}
  \item\label{d:item:1} With respect to the natural order, \(X\) has all infima of \(I\)-indexed families, and, moreover, the infima map \(\bigwedge \colon X^I\to X\) is continuous.
  \item\label{d:item:2} The category \(\TopX\) has products of families of objects indexed by \(I\), and these products are preserved by \(\TopX\to\Top\).
  \item\label{d:item:3} The category \(\TopX\) has the \(I\)-powers of the object \((X,1_X \colon X\to X)\), and this power is preserved by \(\TopX\to\Top\).
  \end{tfae}
\end{proposition}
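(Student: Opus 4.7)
The plan is to go around a cycle \ref{d:item:1}\(\implies\)\ref{d:item:2}\(\implies\)\ref{d:item:3}\(\implies\)\ref{d:item:1}. The middle implication \ref{d:item:2}\(\implies\)\ref{d:item:3} is essentially definitional, since the \(I\)-th power of \((X,1_X)\) is the \(I\)-indexed product of copies of \((X,1_X)\), so nothing needs to be done there. The other two implications carry the content.

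For \ref{d:item:1}\(\implies\)\ref{d:item:2}, given a family \((A_i,\alpha_i)_{i\in I}\) in \(\TopX\), I would take the product \(\prod_{i\in I}A_i\) in \(\Top\) with its projections \(\pi_j\) and equip it with the continuous map
\begin{displaymath}
  \alpha \colon \prod_{i\in I}A_i \xrightarrow{\ \prod_i\alpha_i\ } X^I \xrightarrow{\ \bigwedge\ } X,
\end{displaymath}
which is well defined and continuous by hypothesis. Each \(\pi_j\) is then a morphism \((\prod_i A_i,\alpha)\to(A_j,\alpha_j)\) in \(\TopX\), since \(\alpha((a_i)_i)=\bigwedge_i\alpha_i(a_i)\leq\alpha_j(a_j)\). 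The universal property transports from \(\Top\): given \((B,\beta)\) and morphisms \(f_j\colon(B,\beta)\to(A_j,\alpha_j)\), the induced continuous \(f\colon B\to\prod_i A_i\) satisfies \(\alpha(f(b))=\bigwedge_i\alpha_i(f_i(b))\geq\beta(b)\) because each \(\alpha_i\cdot f_i\geq\beta\). Preservation by \(G\) is built into the construction.

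For \ref{d:item:3}\(\implies\)\ref{d:item:1}, let \((P,\pi)\) with projections \(p_i\colon(P,\pi)\to(X,1_X)\) be the \(I\)-th power of \((X,1_X)\) in \(\TopX\). Since \(G\) preserves this power, \((P,(p_i)_i)\) is an \(I\)-fold product in \(\Top\), and I identify \(P\) with \(X^I\) (product topology) and the \(p_i\) with the standard projections. The goal is to show that \(\pi\) is the pointwise infimum map. One inequality is immediate: each \(p_i\) being a morphism in \(\TopX\) gives \(\pi\leq p_i\), i.e.\ \(\pi((x_i)_i)\leq x_j\) for all \(j\), so \(\pi((x_i)_i)\) is a lower bound of \(\{x_i\}_{i\in I}\). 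For the opposite direction, I would probe by the one-point space: any \(y\in X\) and any family \((x_i)_i\) with \(y\leq x_i\) for all \(i\) yield a family of morphisms \(\ast\to(X,1_X)\) in \(\TopX\) (picking the \(x_i\), with ``structure map'' \(y\)), whose unique factorisation \(\ast\to(X^I,\pi)\) picks \((x_i)_i\) and forces \(y\leq\pi((x_i)_i)\). Hence \(\pi((x_i)_i)\) is the greatest lower bound, so infima of \(I\)-indexed families exist in \(X\) and coincide with \(\pi\), which is continuous by hypothesis.

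The main obstacle I anticipate is in \ref{d:item:3}\(\implies\)\ref{d:item:1}: one must correctly exploit preservation by \(G\) to pin down the underlying space of the power as \(X^I\) with the product topology and the \(p_i\) as the canonical projections, and then use the one-point probes together with the inequality \(\pi\leq p_i\) to show that \(\pi\) realises the infimum \emph{uniquely} at every point; the rest is a straightforward translation between the universal property of powers in \(\TopX\) and the definition of a topological \(\bigwedge\)-semilattice.
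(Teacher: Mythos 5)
Your proposal is correct and follows essentially the same route as the paper: the product is constructed as \(\bigl(\prod_{i\in I}A_i,\ \bigwedge\circ\langle\alpha_i\rangle_{i\in I}\bigr)\) for \ref{d:item:1}\(\implies\)\ref{d:item:2}, the middle implication is trivial, and \ref{d:item:3}\(\implies\)\ref{d:item:1} forces \((X,1_X)^I=(X^I,\bigwedge)\). Your one-point-probe argument merely spells out the details that the paper leaves implicit in that last step.
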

\begin{proof}
  To see \ref{d:item:1}\(\implies\)\ref{d:item:2}, consider a family \((A_i,\alpha_i)_{i\in I}\) of objects of \(\TopX\).
  Its product is given by
  \begin{displaymath}
    \Big(\prod_{i\in I}A_i,\alpha\Big)
  \end{displaymath}
  where \(\alpha\) is the composite
  \begin{equation}
    \label{d:eq:2}
    \begin{tikzcd}[column sep=large]
      \displaystyle\prod_{i \in I} A_i \ar{r}{\langle\alpha_i\rangle_{i\in I}}
      & X^I\ar{r}{\bigwedge}
      & X.
    \end{tikzcd}
  \end{equation}
  The implication \ref{d:item:2}\(\implies\)\ref{d:item:3} is trivial.
  Finally, assuming \ref{d:item:3}, we have necessarily
  \begin{displaymath}
    (X,1_X \colon X\to X)^I=(X^I,\bigwedge \colon X^I\to X),
  \end{displaymath}
  proving \ref{d:item:1}.
\end{proof}

\begin{remark}
  In the sequel we denote the composite map \eqref{d:eq:2}  by
  \begin{displaymath}
    \bigsqcap_{i\in I}\alpha_i \colon \prod_{i\in I}A_i
    \longrightarrow X.
  \end{displaymath}
  As usual, in the binary case we write \(\alpha_1\sqcap\alpha_2\) instead of \(\bigsqcap_{i\in\{1,2\}}\alpha_i\).
\end{remark}

As an immediate consequence of Propositions~\ref{d:prop:2} and \ref{d:prop:1}\ref{dois}, and of Theorem~\ref{d:thm:1}, we obtain:

\begin{corollary}
  \label{d:cor:1}
  \begin{enumerate}
  \item The category \(\TopX\) has and \(\TopX\to\Top\) preserves finite limits if and only if \(X\) is a topological \(\wedge\)-semilattice.
  \item The category \(\TopX\) has and \(\TopX\to\Top\) preserves all limits if and only if \(X\) is a topological \(\bigwedge\)-semilattice.
  \item Assume that \(X\) is sober and has a bottom element.
    Then \(\TopX\) is complete if and only if \(\TopX\) is finitely complete.
  \end{enumerate}
\end{corollary}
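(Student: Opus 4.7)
The plan is to combine the standard decomposition of limits into products and equalisers with Propositions~\ref{d:prop:1}\ref{dois} and \ref{d:prop:2}, plus Theorem~\ref{d:thm:1}. Recall that a category has all finite (resp.\ small) limits if and only if it has all finite (resp.\ small) products together with equalisers, and a functor preserves finite (resp.\ small) limits if and only if it preserves finite (resp.\ small) products and equalisers.

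For parts (1) and (2), I would first invoke Proposition~\ref{d:prop:1}\ref{dois}: equalisers in \(\TopX\) always exist and are computed in \(\Top\), so \(G\) automatically preserves them. Then Proposition~\ref{d:prop:2} translates the existence and preservation by \(G\) of \(I\)-indexed products into the continuity of \(I\)-indexed infima on \(X\). Quantifying over finite sets \(I\) gives the characterisation in (1) by the very definition of a topological \(\wedge\)-semilattice; quantifying over arbitrary sets \(I\) gives (2) by the definition of a topological \(\bigwedge\)-semilattice.

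For part (3), one direction is trivial. For the other, the key observation is that, since \(X\) is assumed to have a bottom element, the forgetful functor \(G\colon\TopX\to\Top\) is a right adjoint (the \emph{rali} described at the start of the section), hence preserves every limit that exists in \(\TopX\). Thus if \(\TopX\) is finitely complete, \(G\) preserves those finite limits, so by (1) the space \(X\) is a topological \(\wedge\)-semilattice. Combined with the sobriety assumption, Theorem~\ref{d:thm:1} upgrades \(X\) to a topological \(\bigwedge\)-semilattice, whereupon (2) yields that \(\TopX\) is complete.

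There is no real obstacle; as the authors indicate, the corollary is an immediate consequence of the preceding results. The only subtle point is the use in (3) of the bottom-element hypothesis to promote mere finite completeness of \(\TopX\) to the stronger statement that \(G\) preserves finite limits, which is what (1) actually characterises.
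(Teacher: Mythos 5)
Your proposal is correct and follows exactly the route the paper intends: the paper gives no explicit proof, stating the corollary as an immediate consequence of Proposition~\ref{d:prop:1}\ref{dois} (equalisers), Proposition~\ref{d:prop:2} ($I$-indexed products versus continuous $I$-indexed infima) and Theorem~\ref{d:thm:1} (sober $+$ topological $\wedge$-semilattice $\iff$ topological $\bigwedge$-semilattice), and your argument fills in precisely these steps, including the correct use of the bottom element in part~(3) to make $G$ a right adjoint and hence limit-preserving.
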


In fact, more can be said if \(X\) is a topological \(\bigwedge\)-semilattice.

\begin{theorem}
  The canonical forgetful functor \(\TopX\to\Top\) is topological if and only if \(X\) is a topological \(\bigwedge\)-semilattice.
\end{theorem}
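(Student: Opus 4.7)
The plan is to reduce both implications to results already established in this section: the forward direction will follow from Corollary~\ref{d:cor:1}(2) once one invokes the standard fact that topological functors preserve all limits, while the converse will be proved by an explicit construction of initial lifts using pointwise infima.

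For the ``only if'' direction, I would argue as follows. Any topological functor has both a left and a right adjoint and preserves and reflects all limits and colimits existing in its codomain. Since \(\Top\) is complete, it follows that \(\TopX\) is complete and \(G\) preserves all limits. Corollary~\ref{d:cor:1}(2) then forces \(X\) to be a topological \(\bigwedge\)-semilattice.

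For the converse, assume that \(X\) is a topological \(\bigwedge\)-semilattice, and let \(\bigl(f_i\colon A\to B_i\bigr)_{i\in I}\) be a \(G\)-structured source, where each \((B_i,\beta_i)\) is an object of \(\TopX\). I would define \(\alpha\colon A\to X\) as the composite
\[
  A\xrightarrow{\;\langle\beta_i\cdot f_i\rangle_{i\in I}\;}X^I\xrightarrow{\;\bigwedge\;}X,
\]
so that \(\alpha(a)=\bigwedge_{i\in I}\beta_i(f_i(a))\); continuity of \(\alpha\) is guaranteed by the hypothesis on \(X\). By construction, \(\alpha\leq\beta_i\cdot f_i\) for every \(i\in I\), so each \(f_i\) becomes a morphism \((A,\alpha)\to(B_i,\beta_i)\) in \(\TopX\). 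For initiality, take any \((C,\gamma)\in\TopX\) and a continuous map \(h\colon C\to A\) such that each composite \(f_i\cdot h\colon(C,\gamma)\to(B_i,\beta_i)\) is a morphism in \(\TopX\); taking the infimum over \(i\) of the pointwise inequalities \(\gamma(c)\leq\beta_i(f_i(h(c)))\) yields \(\gamma\leq\alpha\cdot h\), so \(h\) lifts to a morphism \((C,\gamma)\to(A,\alpha)\). Faithfulness of \(G\) gives uniqueness of the lift, so \(\alpha\) is the unique initial lift of the source.

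I do not anticipate major obstacles. The ``only if'' direction is essentially a one-line appeal to Corollary~\ref{d:cor:1}(2). The ``if'' direction reduces, after the pointwise infimum construction, to the continuity of \(\bigwedge\colon X^I\to X\), which is precisely the hypothesis on \(X\); the only subtlety worth double-checking is that this continuity is available for the appropriate size of indexing \(I\) present in a \(G\)-structured source, but this is exactly what the definition of topological \(\bigwedge\)-semilattice provides.
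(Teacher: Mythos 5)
Your argument follows the same route as the paper: the ``only if'' direction via completeness and preservation of limits plus Corollary~\ref{d:cor:1}, and the ``if'' direction via the pointwise-infimum initial lift, whose initiality check you carry out correctly.

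The one point where your justification does not hold up is precisely the ``subtlety'' you flag at the end and then wave away. A topological functor must admit initial lifts of \emph{all} structured sources, including those indexed by a proper class, whereas the definition of topological \(\bigwedge\)-semilattice only asserts continuity of \(\bigwedge\colon X^I\to X\) for \(I\) a \emph{set}; so it is not true that ``this is exactly what the definition provides'', and for a large \(I\) the composite \(A\to X^I\to X\) you write down is not even well formed. The gap is easily closed, and the paper does exactly this: the relevant class is not \(I\) itself but the class of composites \(\{\beta_i\cdot f_i\colon A\to X\mid i\in I\}\), which is a subclass of the set of all continuous maps \(A\to X\) and hence a set, so the infimum is in fact a set-indexed infimum and the hypothesis applies. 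With that one-line observation inserted, your proof coincides with the paper's.
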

\begin{proof}
  Clearly, if \(\TopX\to\Top\) is topological, then \(\TopX\) is complete and \(\TopX\to\Top\) preserves limits. By Corollary~\ref{d:cor:1}, \(X\) is a topological \(\bigwedge\)-semilattice.

  Assume now that \(X\) is a topological \(\bigwedge\)-semilattice.
  Consider a space \(A\) and (possibly large) families \((A_i,\alpha_i)_{i\in I}\) of objects of \(\TopX\) and \((f_i \colon A\to A_i)_{i\in I}\) of continuous maps.
  Then the class
  \begin{displaymath}
    \mathcal{C}=\{\alpha_i\cdot f_i \colon A\to X\mid i\in I\}
  \end{displaymath}
  is actually a set, so that we may consider it as
  \begin{displaymath}
    \mathcal{C}=\{\gamma_j \colon A\to X\mid j\in J\}
  \end{displaymath}
  for a set \(J\).
  By hypothesis, \(\gamma=\bigwedge_{i\in J}\gamma_j \colon A\to X\) is continuous and defines the initial lift \((A,\gamma)\) of the cone \((f_i \colon A\to A_i)_{i\in I}\) with respect to \((A_i,\alpha_i)_{i\in I}\).
\end{proof}

As we have already mentioned, the category \(\TopX\) differs substantially
from its wide subcategory \(\Comma{\Top}{X}\), whenever \(X\) is not a
T1-space. This is particularly visible in the behaviour of products -- as just
shown, under suitable conditions on \(X\), the forgetful functor \(\TopX\to
\Top\) preserves products while \(\Comma{\Top}{X}\to\Top\) does not in general
-- and on the behaviour of exponentiability and descent, as detailed in the
forthcoming sections.

\section{Exponentiability}
\label{sec:exponentiability}

We recall that a category \(\catC\) is said to be \emph{cartesian closed} if it has finite products and, for each \(A\in\catC\), the functor \(A\times -\colon\catC\to\catC\) has a right adjoint. An object \(A\) of a category \(\catC\) is said to be \emph{exponentiable} if \(\catC\) has products along \(A\) and the functor \(A\times-\) has a right adjoint, usually denoted by \((-)^A\colon\catC\to\catC\), with \(B^A\) said to be an \emph{exponential}, for each object \(B\). It is well-known that both \(\Top\) and \(\Comma{\Top}{X}\) (for every non-empty topological space \(X\)) are not cartesian closed categories, and that the forgetful functor \(\Comma{\Top}{X}\to\Top\) does not preserve exponentials in general. Their exponentiable objects were characterized in \cite{DK70, Nie82, Nie82a, Ric04a, Ric04, CHT03a, RT01}. It is also well-known that the forgetful functor \(\Comma{\Top}{X}\to\Top\) preserves neither products nor exponentials (unless \(X\) is a singleton).

Exponentiability in \(\TopX\), for an Alexandroff space \(X\), is extensively
studied in \cite{Nie06}. Among other results, \cite[Theorem~6.4]{Nie06}
provides a characterisation of exponentiable objects in \(\TopX\), for
particular Alexandroff spaces \(X\). Below we state this result adapted to our
notation, in particular it is formulated with respect to the natural order of
a topological space.

\begin{theorem}[{\cite[Theorem~6.4]{Nie06}}]
  \label{d:thm:6}
  Assume that \(X\) is a complete lattice equipped with the lower Alexandroff topology and assume that the lower Alexandroff topology coincides with the lower topology on \(X\). Then \((A,\alpha)\) is exponentiable in  \(\TopX\) if and only if
  \begin{enumerate}
  \item \(A\) is exponentiable in \(\Top\) and
  \item \(\alpha(a)\wedge- \colon X\to X\) preserves suprema, for all \(a\in X\).
  \end{enumerate}
\end{theorem}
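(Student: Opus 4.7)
I would split the argument into the forward and backward implications. For the forward direction, first observe that since the complete lattice $X$ has a bottom $\bot$, the functor $L^{\bot}\colon\Top\to\TopX$, $B\mapsto(B,\bot_B)$, is left adjoint to $G$. A short computation gives $(A,\alpha)\times L^{\bot}(B)=L^{\bot}(A\times B)$ because $\alpha(a)\wedge\bot=\bot$, whence
\[
A\times{-}\;=\;G\circ\bigl((A,\alpha)\times{-}\bigr)\circ L^{\bot}\colon\Top\longrightarrow\Top.
\]
If $(A,\alpha)$ is exponentiable, the middle factor preserves colimits; since $G$ and $L^{\bot}$ do too (both are left adjoints), the composite preserves all colimits and the Special Adjoint Functor Theorem yields the exponentiability of $A$ in $\Top$. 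For condition~(2), I exploit preservation of a specific coequaliser. Given $(x_j)_{j\in J}$ in $X$ with $x=\bigvee_j x_j$, consider the parallel pair $(J,\bot_J)\rightrightarrows(J,\beta)$ consisting of the identity and a constant map $j\mapsto j_0$, where $\beta(j)=x_j$; its coequaliser in $\Top$ is $\{\ast\}$, and the left Kan extension formula of Remark~\ref{d:rem:1} identifies the coequaliser in $\TopX$ as $(\{\ast\},x)$. Applying $(A,\alpha)\times{-}$ and computing the induced Kan extension of the image pair---using that $a'\mapsto\bigvee_j(\alpha(a')\wedge x_j)$ is continuous (by the continuity of $\vee$ in the Alexandroff topology) so that Remark~\ref{d:rem:1} collapses the double expression to a pointwise value---yields
\[
\alpha(a)\wedge\bigvee_j x_j\;=\;\bigvee_j\bigl(\alpha(a)\wedge x_j\bigr)
\]
for every $a\in A$, which is condition~(2).

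For the backward direction, I would construct the exponential directly. Condition~(2) makes $\alpha(a)\wedge{-}\colon X\to X$ a sup-preserving map between complete lattices, hence a left adjoint, with right adjoint denoted $\alpha(a)\to{-}$. With $B^A$ the exponential in $\Top$ provided by (1), set
\[
\delta\colon B^A\longrightarrow X,\qquad\delta(f)\;=\;\bigwedge_{a\in A}\bigl[\alpha(a)\to\beta(f(a))\bigr].
\]
For continuity, I would factor $\delta=\bigwedge\circ\langle\Phi_a\rangle_{a\in A}$ with $\Phi_a=(\alpha(a)\to{-})\circ\beta\circ\ev_a$: each $\ev_a$ is continuous by definition of the exponential, $\beta$ is continuous, and $\alpha(a)\to{-}$ is monotone hence continuous in the lower Alexandroff topology on $X$; finally, $\bigwedge\colon X^A\to X$ is continuous by Lemma~\ref{m:adjoint}. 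The universal property
\[
\TopX\bigl((C,\gamma),(B^A,\delta)\bigr)\;\cong\;\TopX\bigl((C,\gamma)\times(A,\alpha),(B,\beta)\bigr)
\]
then follows by combining the exponential transpose in $\Top$ with the adjunctions $\alpha(a)\wedge{-}\dashv\alpha(a)\to{-}$: the condition $\gamma(c)\wedge\alpha(a)\leq\beta(h(c,a))$ transposes precisely to $\gamma(c)\leq\alpha(a)\to\beta(g(c)(a))$ for all $a$, that is, to $\gamma(c)\leq\delta(g(c))$.

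The main obstacle, as I see it, is the continuity of $\delta$ and the corresponding Kan extension computation in the forward direction. Both rely essentially on the Alexandroff hypothesis: it forces every monotone map $X\to X$ (in particular each implication $\alpha(a)\to{-}$, and the operations $\vee$, $\bigvee$) to be continuous, so that the pointwise formulas behave well. Without the coincidence of the lower topology with the Alexandroff topology, $\delta$ could fail to be continuous and the clean pointwise description of the exponential would break down.
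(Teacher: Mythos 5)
Your argument is correct, but it takes a different route from the paper, which does not prove this statement directly: the theorem is quoted from Niefield, and the paper instead observes (in the remark following Theorem~\ref{d:cor:2}) that the hypothesis ``lower topology $=$ lower Alexandroff topology'' forces $X$ to be an op-continuous (indeed op-algebraic) lattice carrying the lower Scott topology, hence an \emph{injective} space, so that the statement becomes a special case of Theorem~\ref{d:cor:2} --- which in turn rests on the SAFT/coequaliser-preservation criterion of Theorem~\ref{d:thm:2} and the Kan-extension computation of Proposition~\ref{prop:near-distr}. Your proof is a direct, self-contained specialisation of that machinery: your treatment of condition~(1) is exactly Proposition~\ref{prop:Nie} (note that you do not need SAFT there --- $G\circ((A,\alpha)\times-)\circ L^{\bot}$ is a composite of left adjoints, hence itself a left adjoint); your coequaliser $(J,\bot)\rightrightarrows(J,\beta)\to(1,x)$ is the hands-on version of the paper's choice of the discrete quotient $q\colon I\to 1$ in Proposition~\ref{prop:near-distr}; and your construction of $(B^A,\delta)$ is Theorem~\ref{th:multimap} with the hypotheses verified via the Alexandroff assumption. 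What your route buys is elementarity and independence from the theory of injective spaces; what the paper's route buys is that the same argument covers all injective $X$ (e.g.\ $[0,1]$ with the lower topology), where your key lemma --- that every monotone map out of $X$ is continuous --- fails.

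Two small points of hygiene. First, the phrase ``continuity of $\vee$ in the Alexandroff topology'' should be replaced by the precise fact you actually use: a monotone map \emph{out of} an Alexandroff space is continuous, so $y\mapsto\bigvee_j(y\wedge x_j)$ and each $\alpha(a)\Rightarrow-$ are continuous $X\to X$; the continuity of $\bigwedge\colon X^A\to X$ is then supplied by Lemma~\ref{m:adjoint} via the coincidence with the lower topology. Second, you implicitly use that binary products in $\TopX$ exist and are computed as $(A\times C,\alpha\sqcap\gamma)$ (needed both for $(A,\alpha)\times L^{\bot}(B)=L^{\bot}(A\times B)$ and for the universal property of the exponential); this is guaranteed because Lemma~\ref{m:adjoint} makes $X$ a topological $\bigwedge$-semilattice, and is worth stating before the computation begins.
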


In this section we go substantially beyond the result above and characterise, for suitable (not necessarily Alexandroff) spaces \(X\), the exponentiable objects of \(\TopX\).

By Corollary~\ref{d:cor:1} we know that the existence of finite limits in \(\TopX\) and their preservation by the forgetful functor \(\TopX\to\Top\) is guaranteed exactly when \(X\) is a topological $\wedge$-semilattice. It is therefore not surprising that (see \cite[Lemma~5.1]{Nie06}):
\begin{proposition}\label{prop:Nie}
  If \(X\) is a topological \(\wedge\)-semilattice with bottom element, and \((A,\alpha)\) is an exponentiable object of \(\TopX\), then:
 \begin{enumerate}
 \item The topological space \(A\) is exponentiable in \(\Top\).
 \item The canonical forgetful functor \(\TopX\to\Top\) preserves exponentials, that is, the diagram
   \begin{displaymath}
     \begin{tikzcd}[column sep=large]
       \TopX %
       \ar{r}{(-)^{(A,\alpha)}} %
       \ar{d}[swap]{} %
       & \TopX %
       \ar{d}{} \\
       \Top %
       \ar{r}[swap]{(-)^{A}} %
       & \Top %
     \end{tikzcd}
   \end{displaymath}
   commutes up to isomorphism.
  \end{enumerate}
\end{proposition}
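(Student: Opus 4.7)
My approach hinges on two structural facts about $G\colon\TopX\to\Top$: first, since $X$ has a bottom element $\bot$, the functor $G$ admits a left adjoint right inverse $L\colon\Top\to\TopX$ sending a space $B$ to the pair consisting of $B$ together with the constant map at $\bot$; second, by Corollary~\ref{d:cor:1}, the hypothesis that $X$ is a topological $\wedge$-semilattice ensures that $G$ preserves finite products.

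The central observation I would isolate is that, for every $B\in\Top$, the product $(A,\alpha)\times L(B)$ in $\TopX$ is again of the form $L(A\times B)$. Indeed, by the formula of Proposition~\ref{d:prop:2}, its underlying space is $A\times B$ and its structure map sends $(a,b)$ to $\alpha(a)\wedge\bot=\bot$; since this is the constant map at $\bot$, we obtain $(A,\alpha)\times L(B)\cong L(A\times B)$ on the nose.

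The rest is an adjunction chase. For any $(C,\gamma)\in\TopX$, splicing $L\dashv G$, the isomorphism above, and the exponential adjunction $(A,\alpha)\times-\dashv(-)^{(A,\alpha)}$ yields a chain of natural bijections
\begin{align*}
\Top(A\times B,C)
&\cong \TopX(L(A\times B),(C,\gamma)) \\
&\cong \TopX((A,\alpha)\times L(B),(C,\gamma)) \\
&\cong \Top(B,G((C,\gamma)^{(A,\alpha)})).
\end{align*}
Specialising $(C,\gamma)=L(C)$ exhibits $G\circ(-)^{(A,\alpha)}\circ L$ as right adjoint to $A\times-$ in $\Top$, proving~(1); for arbitrary $(C,\gamma)$, the same bijection identifies $G((C,\gamma)^{(A,\alpha)})$ as the representing object of the functor $\Top(A\times-,C)$, hence with $C^A=G(C,\gamma)^A$, proving~(2).

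I do not foresee a serious obstacle here: the only point requiring care is the computation $\alpha(a)\wedge\bot=\bot$, which is exactly where the hypothesis of a bottom element is consumed, while the ``topological $\wedge$-semilattice'' hypothesis enters separately via Corollary~\ref{d:cor:1} to guarantee that the binary product in $\TopX$ has underlying space $A\times B$ in the first place.
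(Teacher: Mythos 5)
Your argument is correct and is essentially the paper's own proof: both hinge on the left adjoint \(L\dashv G\) with \(L(B)=(B,\bot)\) and on the isomorphism \((A,\alpha)\times L(B)\cong L(A\times B)\), after which the paper passes to the mates (right adjoints) of the resulting commuting square of left adjoints, while you unwind exactly that mate computation into explicit hom-set bijections. The only cosmetic difference is that for part (1) the paper presents \(A\times-\) as the composite of left adjoints \(G\cdot((A,\alpha)\times-)\cdot L\) (using that \(G\) is a left adjoint via the top element of \(X\)), whereas your chase exhibits the right adjoint \(C\mapsto G\bigl(L(C)^{(A,\alpha)}\bigr)\) directly and does not need that fact.
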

\begin{proof}
  Let \(G \colon \TopX\to\Top\) denote the canonical forgetful functor. Since \(X\) has a top and a bottom element, \(G\) is both a left and a right adjoint. Let \(L\colon\Top\to\TopX\) denote the left adjoint of \(G\), which sends \(f \colon B\to C\) to \(f \colon(B,\bot)\to(C,\bot)\). Regarding the first assertion, just observe that the functor \(A\times- \colon\Top\to\Top\) is the composite \(G\cdot((A,\alpha)\times-)\cdot L\) of left adjoints; regarding the second assertion, just observe that the diagram
  \begin{displaymath}
    \begin{tikzcd}[column sep=large] \Top %
        \ar{r}{A\times-} %
        \ar{d}[swap]{L} %
        & \Top %
        \ar{d}{L} \\
        \TopX %
        \ar{r}[swap]{(A,\alpha)\times-} %
        & \TopX %
      \end{tikzcd}
  \end{displaymath}
  of the corresponding left adjoints commutes up to isomorphism.
\end{proof}

When \(X\) is a topological \(\bigwedge\)-semilattice, the category \(\TopX\)
satisfies the conditions of the Special Adjoint Functor Theorem (see
\cite{Mac98}), that is, \(\TopX\) is cocomplete, well-copowered, with
\((1,\bot)\) being a generator. Hence the functor
\((A,\alpha)\times-\colon\TopX\to\TopX\) has a right adjoint if and only if it
preserves coequalisers and sums. Since \(\TopX\) is infinitely distributive
(see Corollary~\ref{d:cor:3}), the existence of a right adjoint to
\((A,\alpha)\times -\) is equivalent to the preservation of coequalisers.

We can therefore obtain the following criterion for exponentiability of objects in \(\TopX\).

\begin{theorem}
  \label{d:thm:2}
  Assume that \(X\) is a topological \(\bigwedge\)-semilattice. The following
  assertions are equivalent, for an object \((A,\alpha)\) in \(\TopX\).
  \begin{tfae}
    \item
      \label{d:item:12}
      \((A,\alpha)\) is exponentiable in \(\TopX\).
    \item
      \label{d:item:13}
      The topological space \(A\) is exponentiable in \(\Top\) and, for every
      quotient map \(q \colon C\to Q\) in \(\Top\) and every continuous map
      \(\gamma \colon C\to X\),
      \begin{displaymath}
        \alpha \sqcap \Lan_q\gamma=\Lan_{1_A\times q}(\alpha\sqcap \gamma).
      \end{displaymath}
  \end{tfae}
\end{theorem}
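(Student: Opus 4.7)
My plan is to apply the Special Adjoint Functor Theorem, as indicated in the discussion preceding the statement. Since \(X\) is a topological \(\bigwedge\)-semilattice, the total and empty meets provide a bottom element \(\bot\) and a top element \(\top\), so Proposition~\ref{prop:Nie} applies; moreover, \(\TopX\) is cocomplete, well-copowered, and admits \((1,\bot)\) as a generator. Consequently, \((A,\alpha)\) is exponentiable if and only if the endofunctor \((A,\alpha)\times-\) preserves all small colimits. Preservation of sums is automatic by Corollary~\ref{d:cor:3}, so the question reduces to preservation of coequalisers. The key tool throughout is the coequaliser lemma preceding Theorem~\ref{d:thm:4}: for a pair \(f,g\colon(D,\delta)\to(C,\gamma)\) in \(\TopX\), the coequaliser is \(q\colon(C,\gamma)\to(Q,\Lan_q\gamma)\), where \(q\) is the coequaliser of \(f,g\) in \(\Top\).

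For \ref{d:item:12}\(\Rightarrow\)\ref{d:item:13}, Proposition~\ref{prop:Nie} immediately yields that \(A\) is exponentiable in \(\Top\). To establish the Kan-extension identity, I would fix a continuous \(\gamma\colon C\to X\) and a quotient \(q\colon C\to Q\), realise \(q\) as the \(\Top\)-coequaliser of some parallel pair \(f,g\colon D\to C\) (for instance its kernel pair), and lift this pair to \(\TopX\) by equipping \(D\) with the constant map \(\bot_D\colon D\to X\); the two morphisms \((D,\bot_D)\rightrightarrows(C,\gamma)\) trivially lie in \(\TopX\). By the coequaliser lemma, their coequaliser in \(\TopX\) is \(q\colon(C,\gamma)\to(Q,\Lan_q\gamma)\). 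Applying the colimit-preserving functor \((A,\alpha)\times-\) produces a coequaliser in \(\TopX\) with underlying \(\Top\)-map \(1_A\times q\) and \(X\)-component \(\alpha\sqcap\Lan_q\gamma\). On the other hand, since \(A\) is exponentiable in \(\Top\), the map \(1_A\times q\) is again a quotient, so the lemma applied to \(1_A\times f,1_A\times g\) in \(\TopX\) identifies the coequaliser with \(X\)-component \(\Lan_{1_A\times q}(\alpha\sqcap\gamma)\). Uniqueness of coequalisers then forces the stated equality.

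For \ref{d:item:13}\(\Rightarrow\)\ref{d:item:12}, I would verify directly that \((A,\alpha)\times-\) preserves coequalisers. Given a coequaliser \(q\colon(C,\gamma)\to(Q,\Lan_q\gamma)\) of some pair \(f,g\) in \(\TopX\), the map \(q\) is a quotient in \(\Top\), and exponentiability of \(A\) in \(\Top\) makes \(1_A\times q\) a quotient as well. The coequaliser lemma then computes the coequaliser of \(1_A\times f,1_A\times g\) in \(\TopX\) as \(1_A\times q\colon(A\times C,\alpha\sqcap\gamma)\to(A\times Q,\Lan_{1_A\times q}(\alpha\sqcap\gamma))\); by hypothesis the \(X\)-component coincides with \(\alpha\sqcap\Lan_q\gamma\), which is exactly the \(X\)-component of \((A,\alpha)\times(Q,\Lan_q\gamma)\). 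Hence \((A,\alpha)\times-\) preserves this coequaliser, and together with preservation of sums and SAFT we conclude exponentiability. The main delicate point in both directions is the reliance on exponentiability of \(A\) in \(\Top\) to guarantee that \(1_A\times q\) remains a quotient; without this, the Kan-extension formula for coequalisers in \(\TopX\) could not be applied on both sides of the comparison.
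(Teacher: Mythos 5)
Your argument is correct and follows essentially the same route as the paper: reduce exponentiability via SAFT and distributivity to preservation of coequalisers, and compute both sides with the left-Kan-extension description of coequalisers in \(\TopX\), using exponentiability of \(A\) in \(\Top\) to ensure \(1_A\times q\) is again a \(\Top\)-coequaliser. The only (immaterial) difference is that you equip the domain of the parallel pair with the constant map \(\bot\), where the paper uses \(\gamma f\wedge\gamma g\).
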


\begin{proof}
  \ref{d:item:12}\(\implies\)\ref{d:item:13}: Given such \(q\colon C\to Q\) and \(\gamma\colon C\to X\), and continuous maps \(f,g\colon B\to C\) such that \(q\) is their coequaliser in \(\Top\), we build a coequaliser diagram in \(\TopX\)
  \begin{displaymath}
    \begin{tikzcd}[column sep=large,row sep=large]
      B \ar[shift left=3pt]{r}{f} %
      \ar[shift right=3pt]{r}[swap]{g} %
      \ar[rd,phantom,"\leq",pos=0.6,shift left=2.3mm] %
      \ar[dr,swap,"\beta",bend right=25] %
      & C \ar[d,"\gamma" description] \ar[r,"q"] %
      & Q \ar[dl,phantom,"\leq",pos=0.6,shift right=2.3mm] %
      \ar[dl,dotted,bend left=25,"\Lan_q(\gamma)"]\\
      & X
    \end{tikzcd}
  \end{displaymath}
 with \(\beta=\gamma f\wedge\gamma g\). Exponentiability of \((A,\alpha)\) guarantees that its image under \((A,\alpha)\times-\)
  \begin{displaymath}
    \begin{tikzcd}[column sep=large,row sep=large]
      A\times B \ar[shift left=3pt]{r}{1_A\times f} %
      \ar[shift right=3pt]{r}[swap]{1_A\times g} %
      \ar[rd,phantom,"\leq",pos=0.6,shift left=2.3mm] %
      \ar[dr,swap,"\alpha\sqcap\beta",bend right=25] %
      & A\times C \ar[d,"\alpha\sqcap\gamma" description] %
      \ar[r,"1_A\times q"] %
      & A\times Q \ar[dl,phantom,"\leq",pos=0.6,shift right=2.3mm] %
      \ar[dl,dotted,bend left=25,"\alpha\sqcap\Lan_q(\gamma)"]\\
      & X
    \end{tikzcd}
  \end{displaymath}
 is a coequaliser diagram in \(\TopX\); that is
  \begin{equation}\label{eq:Lan}
    \alpha\sqcap\Lan_q\gamma=\Lan_{1_A\times q}(\alpha\sqcap\gamma).
  \end{equation}
  Conversely, assertion \ref{d:item:13} guarantees that the functor \((A,\alpha)\times-\) preserves coequalisers, and so it is a left adjoint.
\end{proof}

Remark~\ref{d:rem:1} gives us a simple formula to compute the left Kan extension \(\Lan_q\gamma\) when \(X\) is an injective topological space, with \(\gamma\colon C\to X\) and \(q\colon C\to Q\) as above: for every \(y\in Q\),
\[
  \Lan_q\gamma(y)=\bigwedge_{V\in\mathcal{O}(y)}\bigvee_{c\in q^{-1}(V)}\gamma(c).
\]
Using this formula we can replace equation \eqref{eq:Lan} by the more handy equation~\eqref{d:eq:3} below.

\begin{proposition}
  \label{prop:near-distr}
  Assume that \(X\) is an injective topological space and let \((A,\alpha)\) be an object of \(\TopX\). For every quotient map \(q \colon C\to Q\) in \(\Top\) and every continuous map \(\gamma \colon C\to X\),
  \begin{displaymath}
    \alpha\sqcap \Lan_q\gamma=\Lan_{1_A\times q}(\alpha\sqcap \gamma)
  \end{displaymath}
  if and only if, for every \(a\in A\) and \(y\in Q\),
  \begin{equation}
    \label{d:eq:3}
    \bigwedge_{V\in\mathcal{O}(y)}
    \Big(\alpha(a)\wedge\bigvee_{c\in q^{-1}(V)}\gamma(c)\Big)
    =\bigwedge_{V\in\mathcal{O}(y)}
    \bigvee_{c\in q^{-1}(V)}\alpha(a)\wedge\gamma(c).
  \end{equation}
\end{proposition}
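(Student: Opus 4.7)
The plan is to compute both sides of the equation $\alpha\sqcap\Lan_q\gamma = \Lan_{1_A\times q}(\alpha\sqcap\gamma)$ pointwise at $(a,y)\in A\times Q$ using the explicit formula from Remark~\ref{d:rem:1} for left Kan extensions into an injective space, and show that both sides reduce respectively to the two sides of equation~\eqref{d:eq:3}.

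For the left-hand side of the function equation, Remark~\ref{d:rem:1} gives $\Lan_q\gamma(y)=\bigwedge_{V\in\mathcal{O}(y)}\bigvee_{c\in q^{-1}(V)}\gamma(c)$; combining with the fact that $\alpha(a)\wedge(-)$ distributes over arbitrary meets in the complete lattice $X$, we obtain
\[
  (\alpha\sqcap\Lan_q\gamma)(a,y) = \bigwedge_{V\in\mathcal{O}(y)}\Bigl(\alpha(a)\wedge\bigvee_{c\in q^{-1}(V)}\gamma(c)\Bigr),
\]
which is the left-hand side of \eqref{d:eq:3}. For the right-hand side of the function equation, applying Remark~\ref{d:rem:1} to the continuous map $1_A\times q$ and restricting the outer meet to the cofinal family of basic open neighbourhoods $U\times V$ at $(a,y)$ (using $(1_A\times q)^{-1}(U\times V) = U\times q^{-1}(V)$) yields
\[
  \Lan_{1_A\times q}(\alpha\sqcap\gamma)(a,y) = \bigwedge_{U\ni a,\,V\ni y}\bigvee_{a'\in U,\,c\in q^{-1}(V)}\alpha(a')\wedge\gamma(c).
\]

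The main remaining step, which I expect to be the principal obstacle, is to identify this with $\bigwedge_V\bigvee_c\alpha(a)\wedge\gamma(c)$, the right-hand side of \eqref{d:eq:3}. One inequality is immediate from taking $a'=a$ in the inner supremum. For the reverse, the plan is to apply Remark~\ref{d:rem:1} to the continuous map $a'\mapsto\alpha(a')\wedge\gamma(c)$ (continuous since $\wedge\colon X\times X\to X$ is continuous in the injective space $X$), which gives the identity $\alpha(a)\wedge\gamma(c) = \bigwedge_{U\ni a}\bigvee_{a'\in U}\alpha(a')\wedge\gamma(c)$, and then to interchange the codirected meet over $U$ with the supremum over $c\in q^{-1}(V)$, justified via the op-continuous structure of $X$. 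Once this identification is in place, the biconditional becomes trivial, both sides of \eqref{d:eq:3} being exactly the two sides of the function equation evaluated pointwise.
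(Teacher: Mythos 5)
Your reduction of both sides to the identity
\begin{displaymath}
  \bigwedge_{U\in\mathcal{O}(a)}\bigvee_{a'\in U}\,\bigvee_{c\in q^{-1}(V)}\alpha(a')\wedge\gamma(c)
  \;=\;\bigvee_{c\in q^{-1}(V)}\alpha(a)\wedge\gamma(c)
\end{displaymath}
matches the paper's computation step for step, and you have correctly isolated the crux. The gap lies in your justification of the nontrivial inequality \(\leq\). Writing \(w_{U,c}=\bigvee_{a'\in U}\alpha(a')\wedge\gamma(c)\), your plan is to use \(\alpha(a)\wedge\gamma(c)=\bigwedge_{U}w_{U,c}\) (fine, since each \(a'\mapsto\alpha(a')\wedge\gamma(c)\) is continuous) and then to deduce \(\bigwedge_U\bigvee_c w_{U,c}\leq\bigvee_c\bigwedge_U w_{U,c}\) by ``interchanging the codirected meet over \(U\) with the supremum over \(c\), justified via the op-continuous structure of \(X\)''. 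No such lattice-theoretic principle holds: codirected infima do not commute with infinite suprema in op-continuous, or even completely distributive, lattices. In \(X=[0,1]\) put \(z_{n,c}=1\) if \(c>n\) and \(z_{n,c}=0\) otherwise, with \(n,c\in\bkN\); the rows \((z_{n,\cdot})_{n}\) form a decreasing, hence codirected, family, yet \(\bigwedge_n\bigvee_c z_{n,c}=1\) while \(\bigvee_c\bigwedge_n z_{n,c}=0\). So the interchange cannot be bought from op-continuity alone; the specific form of \(w_{U,c}\), coming from the continuity of \(\alpha\), must enter the argument.

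Note moreover that your desired interchange is literally equivalent to the displayed identity (just commute the two suprema in \(\bigvee_{a'\in U}\bigvee_{c}\)), so as written the argument essentially assumes what is to be proved. The paper closes this step differently: it applies the density formula of Remark~\ref{d:rem:1} a single time, to the map \(a'\mapsto\bigvee_{c\in q^{-1}(V)}\alpha(a')\wedge\gamma(c)\) as a whole, i.e.\ it treats the pointwise supremum of the continuous maps \(\alpha(-)\wedge\gamma(c)\) as one map \(A\to X\) to which the formula \(\beta(a)=\bigwedge_{U\in\mathcal{O}(a)}\bigvee_{a'\in U}\beta(a')\) applies. That is the substantive input your proposal is missing: to repair it you must show that this supremum map satisfies the density formula (for instance by establishing its continuity), rather than appeal to a general commutation of codirected \(\bigwedge\) with \(\bigvee\).
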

\begin{proof}
  By definition, for every \(a\in A\) and \(y\in Q\),
  \begin{align*}
    \Lan_{1_A\times q}(\alpha\sqcap \gamma)(a,y)
    &=\bigwedge_{W\times V\in\mathcal{O}(a,y)}
      \bigvee\{\alpha(a')\wedge\gamma(c)\mid (a',c)\in(1_{A}\times q)^{-1}(W\times V)\}\\
    &=\bigwedge_{V\in\mathcal{O}(y)}\bigwedge_{W\in\mathcal{O}(a)}
      \bigvee_{a'\in W}\bigvee_{c\in q^{-1}(V)}\alpha(a')\wedge\gamma(c).
  \end{align*}
  Using Remark~\ref{d:rem:1},
  \[
    \bigwedge_{W\in\mathcal{O}(a)} \bigvee_{a'\in W}\Big(\bigvee_{c\in
      q^{-1}(V)}\alpha(a')\wedge\gamma(c)\Big)= \bigvee_{c\in
      q^{-1}(V)}\alpha(a)\wedge\gamma(c).
  \]
  Therefore
  \[
    \Lan_{1_A\times
      q}(\alpha\sqcap \gamma)(a,y)=\bigwedge_{V\in\mathcal{O}(y)}\bigvee_{c\in
      q^{-1}(V)}\alpha(a)\wedge\gamma(c).
  \]
  Observing that
  \begin{displaymath}
    (\alpha\sqcap \Lan_q\gamma)(a,y)
    =\alpha(a)\wedge
    \Big(\bigwedge_{V\in\mathcal{O}(y)}\bigvee_{c\in q^{-1}(V)}\gamma(c)\Big)
    =\bigwedge_{V\in\mathcal{O}(y)}
    \Big(\alpha(a)\wedge\bigvee_{c\in q^{-1}(V)}\gamma(c)\Big),
  \end{displaymath}
  we obtain the claimed assertion.
\end{proof}

\begin{theorem}
  \label{d:cor:2}
  Assume that \(X\) is an injective space. Then \((A,\alpha)\) is exponentiable in \(\TopX\) if and only if
  \begin{enumerate}
  \item \(A\) is exponentiable in \(\Top\) and,
    \item\label{d:item:8} for every \(a\in A\), the map \(\alpha(a)\wedge- \colon X\to X\) preserves suprema with respect to the natural order of \(X\).
  \end{enumerate}
\end{theorem}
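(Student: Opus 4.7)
Since an injective space is a topological $\bigwedge$-semilattice (as noted just before Examples~\ref{mm:examples1}), Theorem~\ref{d:thm:2} applies, so exponentiability of $(A,\alpha)$ is equivalent to $A$ being exponentiable in $\Top$ together with the Kan-extension identity $\alpha\sqcap\Lan_q\gamma=\Lan_{1_A\times q}(\alpha\sqcap\gamma)$ for all quotient maps $q\colon C\to Q$ and continuous $\gamma\colon C\to X$. By Proposition~\ref{prop:near-distr}, this identity is in turn equivalent to the pointwise equality~\eqref{d:eq:3}. The strategy is to extract condition \ref{d:item:8} from a specific instance of~\eqref{d:eq:3}, and conversely to derive~\eqref{d:eq:3} termwise from condition \ref{d:item:8}.

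\textbf{Sufficiency.} Suppose $A$ is exponentiable in $\Top$ and each $\alpha(a)\wedge-$ preserves suprema. Then for every $V\in\mathcal{O}(y)$,
\[
  \alpha(a)\wedge\bigvee_{c\in q^{-1}(V)}\gamma(c)
  =\bigvee_{c\in q^{-1}(V)}\alpha(a)\wedge\gamma(c),
\]
so taking the infimum over $V\in\mathcal{O}(y)$ on both sides yields~\eqref{d:eq:3}. Hence $(A,\alpha)$ is exponentiable by Proposition~\ref{prop:near-distr} and Theorem~\ref{d:thm:2}.

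\textbf{Necessity.} Suppose $(A,\alpha)$ is exponentiable. Then $A$ is exponentiable in $\Top$ by Proposition~\ref{prop:Nie}. To show \ref{d:item:8}, fix $a\in A$ and a family $(x_i)_{i\in I}$ in $X$. If $I=\varnothing$ both sides of the desired identity equal $\alpha(a)\wedge\bot=\bot$. Otherwise, take $C$ to be the set $I$ endowed with the discrete topology, $Q=\{*\}$ the one-point space, $q\colon C\to Q$ the unique (quotient) map, and $\gamma\colon C\to X$, $i\mapsto x_i$, which is automatically continuous. Since $\mathcal{O}(*)=\{Q\}$, identity~\eqref{d:eq:3} evaluated at $(a,*)$ reduces to
\[
  \alpha(a)\wedge\bigvee_{i\in I}x_i
  =\bigvee_{i\in I}\alpha(a)\wedge x_i,
\]
establishing \ref{d:item:8}.

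\textbf{Expected difficulty.} The argument is essentially bookkeeping once Theorem~\ref{d:thm:2} and Proposition~\ref{prop:near-distr} are in hand. The only point that requires a small choice is the construction used to test distributivity: one recognises that collapsing a discrete space to a point turns the double infimum in~\eqref{d:eq:3} into a single join, so that distributivity of $\alpha(a)\wedge-$ over arbitrary suprema falls out directly. No further obstacle is expected.
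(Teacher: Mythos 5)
Your proposal is correct and follows essentially the same route as the paper: sufficiency via Proposition~\ref{prop:near-distr} (combined with Theorem~\ref{d:thm:2}), and necessity via Proposition~\ref{prop:Nie} together with the test instance $q\colon I\to 1$ with $I$ discrete. Your explicit treatment of the empty family is a minor extra care the paper leaves implicit, but otherwise the arguments coincide.
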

\begin{proof}
  The ``if-part'' follows from Proposition~\ref{prop:near-distr}. Assume now that \((A,\alpha)\) is exponentiable in \(\TopX\). By Proposition~\ref{prop:Nie}, \(A\) is exponentiable in \(\Top\). To see the second assertion, for a family \(\gamma\colon I\to X\) in \(X\), consider in Proposition~\ref{prop:near-distr} the quotient \(q \colon I\to 1\) with \(I\) being a discrete space.
\end{proof}

\begin{remark}
  Theorem~\ref{d:thm:6} is a special case of our Theorem~\ref{d:cor:2} since the assumption ``the lower topology and the lower Alexandroff topology coincide'' implies that the complete lattice \(X\) is actually an op-continuous lattice. In fact, under this assumption, the lower Scott topology and the lower Alexandroff topology coincide, hence every up-closed subset of \(X\) is also Scott-closed. Therefore, for every codirected subset \(C\) of \(X\), \(\bigwedge C\in{\uparrow}C\), which tells us that this infimum is actually a minimum. As a consequence, \(x\gg x\) for every \(x\in X\). This proves that \(X\) is an op-continuous lattice, even more, \(X\) is an op-algebraic lattice. Also note that every sequence
  \begin{displaymath}
    x_0\geq x_1\geq x_2\geq\dots
  \end{displaymath}
  in \(X\) is stationary.
\end{remark}

Note that, for an injective space \(X\) and \(x\in X\), the map \(x\wedge- \colon X\to X\) is continuous. Moreover, \(x\wedge-\) preserves suprema if and only if \(x\wedge-\) has a right adjoint \(x\Rightarrow- \colon X\to X\) in \(\Ord\). Since \(x\Rightarrow-\), being right adjoint, preserves in particular codirected infima, it is even a continuous map \(x\Rightarrow- \colon X\to X\) (see \cite{Sco72}). Therefore we can equivalently substitute condition \ref{d:item:8} in Theorem~\ref{d:cor:2} by
\begin{enumerate}
\item[\ref{d:item:8}'] for every \(a\in A\), the continuous map \(\alpha(a)\wedge- \colon X\to X\) has a right adjoint in \(\Top\).
\end{enumerate}
In fact, this condition guarantees exponentiability under milder conditions on
\(X\).

\begin{theorem}\label{th:multimap}
  Assume that \(X\) is a topological \(\wedge\)-semilattice. An object \( (A, \alpha) \) of \( \TopX \) is exponentiable provided that
  \begin{enumerate}
  \item \(A\) is exponentiable in \(\Top\),
  \item \(X\) has continuous infima for all families indexed by the underlying set of \(A\), and
  \item for every \(a\in A\), the continuous map \(\alpha(a)\wedge- \colon X\to X\) has a right adjoint \(\alpha(a)\Rightarrow- \colon X\to X\) in \(\Top\).
  \end{enumerate}
\end{theorem}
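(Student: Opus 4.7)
The plan is to construct the exponential $(B,\beta)^{(A,\alpha)}$ in $\TopX$ explicitly, for an arbitrary object $(B,\beta)$. Since $A$ is exponentiable in $\Top$, I take the underlying space to be $B^A$, and define a structure map $\tilde{\beta} \colon B^A \to X$ by
\[
  \tilde{\beta}(f) = \bigwedge_{a \in A}\bigl(\alpha(a) \Rightarrow \beta(f(a))\bigr).
\]
Condition~(2) guarantees that this infimum exists, and condition~(3) provides the right adjoints $\alpha(a) \Rightarrow -$ used in the formula. The goal is then to show that $\tilde{\beta}$ is continuous and that $(B^A, \tilde{\beta})$, together with the evaluation morphism, has the universal property of the exponential.

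For continuity of $\tilde{\beta}$: for each fixed $a \in A$, the map $B^A \to X$ sending $f \mapsto \alpha(a) \Rightarrow \beta(f(a))$ is the composite of the continuous point evaluation $f \mapsto f(a) = \ev(a,f)$, the continuous map $\beta$, and the continuous right adjoint $\alpha(a) \Rightarrow -\colon X \to X$ from condition~(3). Assembling these coordinatewise yields a continuous map $B^A \to X^A$ into the product space indexed by the underlying set of $A$; condition~(2) then supplies the continuous infimum $\bigwedge \colon X^A \to X$, and the composite is $\tilde{\beta}$.

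Next I verify the universal property. Using the formula and the adjunction $\alpha(a) \wedge - \dashv \alpha(a) \Rightarrow -$, one checks that $(\alpha \sqcap \tilde{\beta})(a,f) \leq \alpha(a) \wedge (\alpha(a) \Rightarrow \beta(f(a))) \leq \beta(\ev(a,f))$, so $\ev$ defines a morphism $(A \times B^A, \alpha \sqcap \tilde{\beta}) \to (B,\beta)$ in $\TopX$. Given any object $(C,\gamma)$ and any $\TopX$-morphism $\tilde{h}\colon (A \times C, \alpha \sqcap \gamma) \to (B,\beta)$, let $h\colon C \to B^A$ be its $\Top$-transpose; the inequality $\alpha(a) \wedge \gamma(c) \leq \beta(\tilde{h}(a,c))$ rearranges by the adjunction to $\gamma(c) \leq \alpha(a) \Rightarrow \beta(\tilde{h}(a,c))$ for every $a$, and taking the infimum over $a$ gives $\gamma(c) \leq \tilde{\beta}(h(c))$, so $h$ lifts to a $\TopX$-morphism. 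Naturality is inherited from the underlying $\Top$-adjunction, yielding the exponential.

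The main obstacle is the continuity of $\tilde{\beta}$. In the setting of Theorem~\ref{d:cor:2} one can exploit the extra structure of an injective space, but here we are only given that each individual map $\alpha(a) \Rightarrow -$ is continuous, and not that a Heyting operation $X \times X \to X$ is jointly continuous. This forces the argument to proceed pointwise in $a$, factoring through the product $X^A$ rather than through any exponential in $\Top$, and it is precisely at this step that condition~(2) becomes essential.
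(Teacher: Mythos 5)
Your proposal is correct and follows essentially the same route as the paper's proof: the same formula $\bigwedge_{a\in A}(\alpha(a)\Rightarrow\beta(h(a)))$ for the structure map, the same factorisation of its continuity through the product $X^{|A|}$ over the underlying set of $A$ (using condition~(2) for the continuous infimum and condition~(3) for each coordinate), and the same adjunction computation for the universal property of evaluation.
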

\begin{proof}
  For \((B,\beta)\) in \(\TopX\), put \((B,\beta)^{(A,\alpha)}=(B^A,\alpha \Rightarrow \beta) \) where $B^A$ is the exponential in \(\Top\), that is, \(B^A=\{h\colon A\to B\mid h\) is continuous\(\}\), and
  \begin{align*}
    \alpha\Rightarrow\beta \colon B^A &\longrightarrow X \\
    h &\longmapsto\bigwedge_{a\in A}\big(\alpha(a)\Rightarrow\beta(h(a))\big).
  \end{align*}
  This map is continuous because it is the composite of continuous maps as indicated in the following diagram
  \begin{displaymath}
    B^A\xrightarrow{\quad\id\quad}B^{|A|}
    \xrightarrow{\quad\beta^{|A|}\quad}X^{|A|}
    \xrightarrow{\quad\widehat{\alpha}\quad}X^{|A|}
    \xrightarrow{\quad\bigwedge\quad}X
  \end{displaymath}
  where $|A|$ denotes the discrete space (with the same underlying set as $A$), and the map $\widehat{\alpha}$, defined by $\widehat{\alpha}((x_a)_{a\in A})=(\alpha(a)\Rightarrow x_a)_{a\in A}$, is continuous because it is induced in the product $X^{|A|}$ by the family of continuous maps
  \begin{displaymath}
    (X^{|A|}\xrightarrow{\quad p_{a}\quad}X
    \xrightarrow{\quad \alpha(a)\Rightarrow-\quad}X)_{a\in A}.
  \end{displaymath}
  It remains to be shown that the evaluation (continuous) map $\ev\colon(A,\alpha)\times(B^A,\alpha\Rightarrow\beta)\to(B,\beta)$ is a morphism in $\TopX$ and has the required universal property. For each $(a,h)\in A\times B^A$, $\alpha(a)\wedge (\alpha\Rightarrow\beta)(h)\leq\beta(h(a))$ by definition of $\alpha\Rightarrow\beta$, hence $\ev$ is in fact a morphism in $\TopX$. Moreover, given a morphism $g\colon(A,\alpha)\times(C,\gamma)\to(B,\beta)$, there exists a unique continuous map $\overline{g}\colon C\to B^A$ so that $\ev\cdot(1_A\times\overline{g})=g$; we only need to show that such map is a morphism $\overline{g}\colon(C,\gamma)\to (B^A,\alpha\Rightarrow\beta)$: for each $a\in A$ and $c\in C$, $\alpha(a)\wedge\gamma(c)\leq\beta(g(a,c))$, and therefore $\gamma(c)\leq\bigwedge_{a\in A}(\alpha(a)\Rightarrow \beta(g(a,c)))=(\alpha\Rightarrow\beta)(\overline{g}(c))$.
\end{proof}

Let us call a topological \(\wedge\)-semilattice \(X\) a \emph{topological Heyting \(\wedge\)-semilattice} if, for every \(x\in X\), the continuous map \(x\wedge-\colon X\to X\) has a right adjoint in \(\Top\). Theorem~\ref{th:multimap}, together with Proposition~\ref{prop:Nie}, gives:

\begin{corollary}
  \label{d:cor:4}
  If \(X\) is a topological Heyting \(\bigwedge\)-semilattice, then the following conditions are equivalent, for an object \((A,\alpha)\) of \(\TopX\):
  \begin{tfae}
  \item \((A,\alpha)\) is exponentiable in \(\TopX\).
  \item \(A\) is exponentiable in \(\Top\).
  \end{tfae}
\end{corollary}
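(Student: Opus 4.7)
The plan is to observe that the hypothesis ``topological Heyting \(\bigwedge\)-semilattice'' packages together exactly the ingredients needed to apply Proposition~\ref{prop:Nie} in one direction and Theorem~\ref{th:multimap} in the other. The proof should be essentially a verification that the hypotheses of each result are satisfied.

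First, I would unpack the structural consequences of \(X\) being a topological Heyting \(\bigwedge\)-semilattice: since \(X\) is complete as an ordered set, it has a bottom element \(\bot\); since \(\bigwedge\colon X^{I}\to X\) is continuous for every set \(I\), specialising to \(I=\{1,2\}\) shows that \(X\) is in particular a topological \(\wedge\)-semilattice; and by definition, for each \(x\in X\), the continuous map \(x\wedge -\colon X\to X\) has a right adjoint in \(\Top\).

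For the implication \((i)\Rightarrow(ii)\), \(X\) is a topological \(\wedge\)-semilattice with bottom element, so this is precisely the setting of Proposition~\ref{prop:Nie}, from which exponentiability of \(A\) in \(\Top\) follows immediately. For \((ii)\Rightarrow(i)\), I would verify the three hypotheses of Theorem~\ref{th:multimap} for \((A,\alpha)\): exponentiability of \(A\) in \(\Top\) is the assumption; the existence of continuous infima in \(X\) for families indexed by the underlying set \(|A|\) is a special case of the \(\bigwedge\)-semilattice hypothesis (take \(I=|A|\)); and for every \(a\in A\), the continuous map \(\alpha(a)\wedge -\colon X\to X\) has a right adjoint in \(\Top\) by the Heyting assumption applied to the element \(x=\alpha(a)\in X\). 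Hence \((A,\alpha)\) is exponentiable in \(\TopX\).

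I do not anticipate any genuine obstacle here: the corollary is essentially a bookkeeping statement, combining the necessary condition of Proposition~\ref{prop:Nie} with the sufficient condition of Theorem~\ref{th:multimap}, once one notices that the Heyting \(\bigwedge\)-semilattice hypothesis is precisely what makes the two sets of conditions match.
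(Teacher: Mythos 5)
Your proof is correct and follows exactly the route the paper takes: the paper states the corollary as an immediate consequence of combining Proposition~\ref{prop:Nie} (for necessity) with Theorem~\ref{th:multimap} (for sufficiency), which is precisely your hypothesis-checking argument. The only content beyond citation is verifying that a topological Heyting \(\bigwedge\)-semilattice satisfies the hypotheses of both results, and you do this correctly.
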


For a topological \(\wedge\)-semilattice \(X\) with bottom element, we analyse now the relationship between the existence of particular exponentials in \(\TopX\) and properties of \(X\). This will be the case of the objects \((1,x)\), where we represent by \(x\) the map \(1\to X\) assigning \(x\in X\) to the unique element of \(1\).

First recall that, by Proposition~\ref{prop:Nie}, for objects \((A,\alpha)\) and \((B,\beta)\) in \(\TopX\) with \((A,\alpha)\) exponentiable, the space \(A\) is exponentiable in \(\Top\) and the exponential \((B,\beta)^{(A,\alpha)}\) in \(\TopX\) is of the form \((B^A,\delta)\), with \(B^A\) denoting the exponential in \(\Top\). Furthermore, for the counit \(\varepsilon_{(B,\beta)}\colon(A,\alpha)\times(B^A ,\delta)\to(B,\beta)\) of the adjunction \((A,\alpha)\times-\dashv(-)^{(A,\alpha)})\) at \((B,\beta)\), the continuous map \(\varepsilon_{(B,\beta)}\colon A\times B^A\to B\) is a counit for the adjunction \(A\times -\dashv (-)^A\) in \(\Top\) at the space \(B\). In fact, one easily verifies the required universal property using that, for every topological space \(C\),
\begin{displaymath}
  \Top(A\times C,B)=(\TopX)((A,\alpha)\times(C,\bot),(B,\beta))
\end{displaymath}
and
\begin{displaymath}
  \Top(C,B^A)=(\TopX)((C,\bot),(B^A,\delta)).
\end{displaymath}
We conclude that the counit \(\varepsilon_{(B,\beta)}\colon(A,\alpha)\times(B^A,\delta)\to(B,\beta)\) can be chosen as \(\varepsilon_{(B,\beta)}(a,h)=h(a)\), for all \(a\in A\) and \(h\in B^A\); that is, it is -- as in \(\Top\) -- the evaluation map. Given also \((C,\gamma)\) in \(\TopX\), it follows immediately that the diagram
\begin{displaymath}
  \begin{tikzcd}
    (\TopX)((C,\gamma),(B,\beta)^{(A,\alpha)}) \ar{r}\ar{d}
    & (\TopX)((A,\alpha)\times (C,\gamma),(B,\beta))
    \ar{d}\\
    \Top(C,B^A) \ar{r}
    & \Top(A\times C,B)
  \end{tikzcd}
\end{displaymath}
commutes. Hence, for a continuous map \(f \colon C\to B^A\) and its mate \(\overline{f}\colon A\times C\to B\) (that is, the continuous map induced by the adjunction), \(f\) is a morphism \((C,\gamma)\to(B,\beta)^{(A,\alpha)}\) in \(\TopX\) if and only if \(\overline{f}\) is a morphism \((A,\alpha)\times(C,\gamma)\to(B,\beta)\) in \(\TopX\).

\begin{theorem}\label{d:thm:5}
  Assume that \(X\) is a topological \(\wedge\)-semilattice with bottom element.
  \begin{enumerate}
  \item\label{d:item:10} For every \(x\in X\), the continuous map \(x\wedge- \colon X\to X\) has a right adjoint in \(\Top\) if and only if \((1,x)\) is exponentiable in \(\TopX\).
  \item\label{d:item:11} For every set \(I\), \(X\) has continuous infima indexed by \(I\) if and only if \((I,\top)\) (with \(I\) considered as a discrete space) is exponentiable in \(\TopX\).
  \end{enumerate}
\end{theorem}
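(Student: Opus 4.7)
The plan is to obtain both ``if'' directions from the sufficient condition of Theorem~\ref{th:multimap}, and both ``only if'' directions by analysing the shape of the exponential recorded in the discussion preceding this statement. Namely, whenever \((A,\alpha)\) is exponentiable, Proposition~\ref{prop:Nie} forces \(A\) to be exponentiable in \(\Top\), the exponential \((B,\beta)^{(A,\alpha)}\) has underlying space \(B^A\) (the exponential in \(\Top\)), and its counit is the usual evaluation. The universal property will then be tested against source objects of the form \((1,y)\) with \(y\in X\), which is enough to recover the order-theoretic data of the structure map \(\delta\) of the exponential.

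For part~\ref{d:item:10}, the ``if'' direction is a direct application of Theorem~\ref{th:multimap} to \((A,\alpha)=(1,x)\): the one-point space is exponentiable in \(\Top\), \(X\) trivially has continuous singleton-indexed infima, and the hypothesis that \(x\wedge-\) has a right adjoint in \(\Top\) supplies the remaining condition. For the converse, I would write \((X,1_X)^{(1,x)}=(X,\delta)\) with \(\delta\colon X\to X\) continuous, as justified above, and test the universal property at \((1,y)\) for arbitrary \(y\in X\): morphisms \((1,x)\times(1,y)\to(X,1_X)\) in \(\TopX\) are exactly elements \(z\in X\) with \(x\wedge y\le z\), while morphisms \((1,y)\to(X,\delta)\) are exactly elements \(z\in X\) with \(y\le\delta(z)\); the bijection between these sets (which is the identity on \(z\)) yields \(x\wedge y\le z \iff y\le\delta(z)\). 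This is precisely the adjunction \(x\wedge-\dashv\delta\) in \(\Ord\), and since both maps are continuous, it is the desired adjunction in \(\Top\).

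For part~\ref{d:item:11}, the ``if'' direction again follows from Theorem~\ref{th:multimap} with \((A,\alpha)=(I,\top)\): a discrete space is locally compact Hausdorff, hence exponentiable in \(\Top\); \(X\) has continuous \(|I|\)-indexed infima by hypothesis; and \(\top\wedge-=1_X\) is self-adjoint. For the converse, I would write \((X,1_X)^{(I,\top)}=(X^I,\delta)\), where \(X^I\) is the cartesian product (the exponential in \(\Top\) for discrete \(I\)) and \(\delta\colon X^I\to X\) is continuous. Testing the universal property at a source \((1,y)\): morphisms \((I,\top)\times(1,y)\to(X,1_X)\) correspond to families \(h\in X^I\) with \(y\le h(i)\) for all \(i\in I\), while morphisms \((1,y)\to(X^I,\delta)\) correspond to \(h\in X^I\) with \(y\le\delta(h)\). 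The bijection yields
\[
(y\le h(i)\text{ for all }i\in I) \iff y\le\delta(h),
\]
identifying \(\delta(h)\) as the greatest lower bound of \((h(i))_{i\in I}\). Continuity of \(\delta\) then supplies the continuous infima map \(X^I\to X\).

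The only delicate point — and the step most likely to require care — is verifying that the ``only if'' arguments correctly translate the exponential's universal property into the stated order-theoretic equivalences. The observations recorded just before the theorem (that the underlying space of \((B,\beta)^{(A,\alpha)}\) is \(B^A\) and that the counit is the evaluation map) do the heavy lifting, so that the remaining verification is the straightforward specialisation of the hom-set bijection at source objects \((1,y)\) ranging over all \(y\in X\).
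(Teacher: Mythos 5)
Your proposal is correct and follows essentially the same route as the paper: both ``if'' directions via Theorem~\ref{th:multimap}, and both converses by exploiting the observations preceding the statement (that the exponential has underlying space \(B^A\) with evaluation as counit) and testing the hom-set bijection at sources \((1,y)\). The only cosmetic difference is in the converse of part~\ref{d:item:11}, where the paper identifies \((X,1_X)^{(I,\top)}\) with the \(I\)-power of \((X,1_X)\) and cites Proposition~\ref{d:prop:2}, whereas you extract \(\delta=\bigwedge\) directly from the same hom-set computation.
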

\begin{proof}
  To prove \ref{d:item:10}, consider \(x\in X\). If \(x\wedge- \colon X\to X\) has a right adjoint in \(\Top\), then \((1,x)\) is exponentiable in \(\TopX\) by Theorem~\ref{th:multimap} since every one-element space is exponentiable in \(\Top\). Assume now that \((1,x)\) is exponentiable and consider the exponential \((X,1_X)^{(1,x)}=(X,\gamma_x)\) in \(\TopX\). Then, for all \(y\in X\),
  \begin{displaymath}
    (\TopX)((1,y),(X,1_X)^{(1,x)})\simeq
    (\TopX)((1,x)\times(1,y),(X,1_X)),
  \end{displaymath}
  which is equivalent to
  \begin{displaymath}
    \forall z\in X\,.\,(y\leq\gamma_x(z)\iff x\wedge y\leq z).
  \end{displaymath}
  Therefore \(x\wedge-\dashv\gamma_x\), that is, \(\gamma_x=(x\Rightarrow-)\).

  Regarding \ref{d:item:11}, if \(X\) has continuous infima indexed by \(I\), then \((I,\top)\) is exponentiable by Theorem~\ref{th:multimap} since every discrete space is exponentiable in \(\Top\). On the other hand, note that, for every \((A,\alpha)\) in \(\TopX\), a map \(f\colon I\times A\to X\) is a morphism \(f\colon(I,\top)\times(A,\alpha)\to(X,1_X)\) in \(\TopX\) if and only if, for all \(i\in I\), \(f(i,-)\colon(A,\alpha)\to(X,1_X)\) is a morphism in \(\TopX\). Therefore \((X,1_X)^{(I,\top)}\) coincides with the power \((X,1_X)^I\) in \(\TopX\), and the assertion follows from Proposition~\ref{d:prop:2}.
\end{proof}

Combining Corollary~\ref{d:cor:4} and Theorem~\ref{d:thm:5}, we obtain:

\begin{corollary}
  Assume that \(X\) is a topological \(\wedge\)-semilattice with bottom element. Then the following assertions are equivalent.
  \begin{tfae}
  \item The canonical forgetful functor \(\TopX\to\Top\) reflects exponentiable objects.
  \item \(X\) is a topological Heyting \(\bigwedge\)-semilattice.
  \end{tfae}
\end{corollary}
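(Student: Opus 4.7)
The proof amounts to assembling Corollary~\ref{d:cor:4} and the two halves of Theorem~\ref{d:thm:5}; the key observation is simply that the test objects \((1,x)\) and \((I,\top)\) appearing in Theorem~\ref{d:thm:5} have underlying spaces that are always exponentiable in \(\Top\).

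For the implication from (ii) to (i) there is nothing to do beyond citing Corollary~\ref{d:cor:4}: if \(X\) is a topological Heyting \(\bigwedge\)-semilattice, then the exponentiability of \((A,\alpha)\) in \(\TopX\) is equivalent to the exponentiability of \(A\) in \(\Top\), which in particular means that \(G\) reflects exponentiable objects.

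For the converse, assume \(G\colon\TopX\to\Top\) reflects exponentiable objects. The plan is to use the two parts of Theorem~\ref{d:thm:5} separately, each fed with a space that is automatically exponentiable in \(\Top\). First, for each \(x\in X\), the one-point space \(1\) is exponentiable in \(\Top\); since \(G(1,x)=1\), reflection gives that \((1,x)\) is exponentiable in \(\TopX\), and Theorem~\ref{d:thm:5}\ref{d:item:10} then yields a right adjoint in \(\Top\) to \(x\wedge-\colon X\to X\). Thus \(X\) is a topological Heyting \(\wedge\)-semilattice. Secondly, for each set \(I\), the discrete space \(I\) is exponentiable in \(\Top\); since \(G(I,\top)=I\), reflection gives that \((I,\top)\) is exponentiable in \(\TopX\), and Theorem~\ref{d:thm:5}\ref{d:item:11} yields that \(X\) admits continuous \(I\)-indexed infima.

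Applied to all sets \(I\) (including \(I=\varnothing\), which supplies the top element, and combined with the given bottom element and binary meets), this shows that \(X\) is a topological \(\bigwedge\)-semilattice. Together with the Heyting property established in the first step, this gives that \(X\) is a topological Heyting \(\bigwedge\)-semilattice, as claimed. There is no real obstacle in this argument; the entire work has been done upstream in Corollary~\ref{d:cor:4} and Theorem~\ref{d:thm:5}, and the only thing left is to notice that the objects \((1,x)\) and \((I_{\mathrm{disc}},\top)\) furnish the correct probes to convert the abstract reflection hypothesis into the two concrete conditions that jointly define a topological Heyting \(\bigwedge\)-semilattice.
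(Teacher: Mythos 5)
Your proof is correct and follows exactly the route the paper intends: the paper states this corollary with no further argument beyond ``Combining Corollary~\ref{d:cor:4} and Theorem~\ref{d:thm:5}'', and your write-up simply makes explicit that the probes \((1,x)\) and \((I,\top)\) have exponentiable underlying spaces, so reflection converts Theorem~\ref{d:thm:5} into the Heyting and completeness conditions, while Corollary~\ref{d:cor:4} gives the converse.
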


\begin{examples}
  Assume that the natural order on \(X\) makes it a complete lattice, and that \(X\) is equipped with the lower topology. For each \(x\in X\), if \(x\wedge -\colon X\to X\) has a right adjoint \(x\Rightarrow -\) in \(\Ord\), then, by Lemma~\ref{m:adjoint}, \(x\Rightarrow -\colon X\to X\) is a continuous map. Therefore, if the natural order on \(X\) makes it a frame, then \(X\) is a topological Heyting \(\bigwedge\)-semilattice, and consequently the functor \(\TopX\to\Top\) preserves and reflects exponentiability and exponentials. This is the case when \(X=\bkS,\, \bkN,\, [0,1],\, \bkZ_-^\infty,\,\bkZ_+^\infty\) of Examples \ref{mm:examples1}, but also for \(X=VZ\) the lower Vietoris space of a topological space \(Z\).
\end{examples}

\section{Descent}
\label{sec:descent}

Let \( p \colon A \to B \) be a morphism in a category \( \catC \) with
pullbacks. In our fundamental setting, descent theory for \( p \) refers to
the problem of studying whether bundles over \( A \) equipped with some
additional algebraic structure (specified by \( p \)) -- called
\textit{descent data for p} -- can be regarded as bundles over \( B \).

More specifically, bundles over \( A \) equipped with such descent data for \(
p \) form a category, denoted \( \mathsf{Desc}_{\catC}(p) \), via which the
pullback change-of-base functor \( p^* \colon \Comma{\catC}{B} \to
\Comma{\catC}{A} \) factors through, depicted as follows:
\begin{equation}
  \label{eq:pb.changebase}
  \begin{tikzcd}
    \Comma{\catC}{B} \ar[rd,swap,"\mathcal K^p"]
                     \ar[rr,"p^*"]
    && \Comma{\catC}{A}  \\
    & \mathsf{Desc}_{\catC}(p) \ar[ru,swap,"\mathcal U^p"]
  \end{tikzcd}
\end{equation}
In Diagram \eqref{eq:pb.changebase}, the functor \( \mathcal U^p \) forgets
the descent data, and \( \mathcal K^p \) -- \textit{the comparison functor} --
maps a bundle \( f \colon U \to B \) to the bundle \( p^*(f) \colon p^*(U)
\to A \) equipped with its canonical descent data.

By the Bénabou-Roubaud theorem \cite{BR70} (see \cite{JT94, JST04, Nun22} for a modern
account), Diagram \eqref{eq:pb.changebase} is equivalent to the
Eilenberg-Moore factorization of \( p^* \) with respect to the adjunction \(
p_! \dashv p^* \), so that the category \( \mathsf{Desc}_{\catC}(p) \) can be
regarded as the category of algebras for the induced monad.

We say \( p \) is an \textit{effective descent morphism} if \( \mathcal K^p \)
is an equivalence, that is, if \( p^* \) is monadic. Similarly, \(p^*\) is a
\emph{descent morphism} if \( \mathcal K^p \) is fully faithful, that is, if
\( p^* \) is premonadic. If \(\catC\) is finitely complete, then a morphism in
\(\catC\) is a descent morphism if and only if it is a pullback-stable regular
epimorphism \cite{JT94, JST04}.

The descent morphisms in \(\Top\) are precisely the \emph{biquotient} maps
(also known as \emph{limit lifting maps} and as \emph{universal quotient
maps}). They are those continuous maps \(f \colon A\to B\) such that, for
every ultrafilter convergence \(\mathfrak{b}\leadsto b\) in \(B\), there
exists an ultrafilter convergence \(\mathfrak{a}\leadsto a\) in \(A\) with
\(Uf(\mathfrak{a})=\mathfrak{b}\) and \(f(a)=b\) (see \cite{Haj66, Haj67,
Mic68, DK70}). Effective descent morphisms in \(\Top\) were first
characterised by \cite{RT94} via a lifting property on ``2-chains of
ultrafilter convergence''. For more information we refer to \cite{CH02, CJ11};
however, in this paper we will not make explicit use of this characterisation.

In this section we give, for suitable spaces \(X\), a characterisation of
effective descent morphisms in \(\TopX\).

In what follows, we study the effective descent morphisms of \( \TopX \), for
suitable spaces \( X \).

\begin{center}
  \emph{Throughout this section we assume that \(X\) is a topological
  \(\bigwedge\)-semilattice}.
\end{center}

We provide sufficient conditions for a morphism \( p \colon (A,\alpha) \to
(B,\beta) \) in \( \TopX \) to be effective for descent in
Theorem~\ref{d:thm:7}, and in Theorem~\ref{d:thm:8} we confirm that these
conditions are necessary when \( X \) is a frame with respect to its natural
order.

We leave the complete characterisation of the effective descent morphisms in
\( \TopX \) as an open problem. The obstacle in our approach lies in the
preservation of effective descent morphisms by the functor \( \TopX \to
\Fam(X) \), which we analyse in Lemmas~\ref{lem:preserve-term-cod-suff}
and~\ref{d:lem:1}. This analysis is based on more general preservation
techniques which are developed in \cite{NP24_tmp}.

We start with the following observation.
\begin{lemma}
  \label{d:lem:3}
  The canonical forgetful functor \(\TopX\to\Top\) preserves (effective)
  descent morphisms.
\end{lemma}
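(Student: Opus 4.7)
The plan is to exploit that $G \colon \TopX \to \Top$ is topological (by the previous theorem, since $X$ is a topological $\bigwedge$-semilattice): $G$ has a fully faithful left adjoint $L \colon A \mapsto (A, \bot)$ and a fully faithful right adjoint $R \colon A \mapsto (A, \top)$, where $\bot$ and $\top$ exist in $X$ by completeness. In particular, $G$ preserves all limits and all colimits.

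For descent morphisms, I would invoke the characterisation, recalled in the introduction to this section, of descent morphisms in the finitely complete category $\TopX$ as pullback-stable regular epimorphisms. If $p \colon (A, \alpha) \to (B, \beta)$ is such, then $Gp$ is a regular epimorphism in $\Top$ because $G$ preserves coequalisers. Given any continuous $q \colon C \to B$, I would lift it to the morphism $q \colon (C, \beta q) \to (B, \beta)$ in $\TopX$; the pullback of $p$ along this lift is a regular epimorphism in $\TopX$ by pullback-stability, and applying $G$ (which preserves pullbacks) returns precisely the pullback of $Gp$ along $q$ as a regular epimorphism in $\Top$.

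For effective descent, I would apply Beck's monadicity theorem to the induced square of pullback functors
\[
\begin{tikzcd}
\TopX/(B,\beta) \ar[r, "p^*"] \ar[d, "G_B"'] & \TopX/(A,\alpha) \ar[d, "G_A"] \\
\Top/B \ar[r, "(Gp)^*"'] & \Top/A
\end{tikzcd}
\]
The slice functor $G_B$ inherits from $L \dashv G$ a fully faithful left adjoint $L_B$ sending $\phi \colon C \to B$ to $\phi \colon (C, \bot) \to (B, \beta)$, and $G_A$ inherits $L_A$ analogously; moreover $G_A, G_B$ preserve colimits since $G$ does and colimits in these slices are computed in $\TopX$. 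A direct computation, using that pullbacks in $\TopX$ are pullbacks in $\Top$ equipped with the infimum structure map, yields the Beck--Chevalley identities $(Gp)^* G_B = G_A p^*$ and $p^* L_B \cong L_A (Gp)^*$. Assuming $p^*$ monadic, I would verify Beck's hypotheses for $(Gp)^*$: (i) it has a left adjoint given by post-composition with $Gp$; (ii) it is conservative, since if $(Gp)^* h$ is invertible then so is $p^* L_B h \cong L_A (Gp)^* h$, hence $L_B h$ is invertible by conservativity of the monadic $p^*$, and then $h$ is invertible by full faithfulness of $L_B$; (iii) every $(Gp)^*$-split pair $h_1, h_2$ in $\Top/B$ lifts through $L_B$ to a $p^*$-split pair in $\TopX/(B,\beta)$ whose coequaliser exists and is preserved by $p^*$; pushing this coequaliser down through $G_B$ yields the coequaliser in $\Top/B$, and its preservation by $(Gp)^*$ follows from the Beck--Chevalley identities together with $G_A$ preserving colimits.

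The main delicate step is (iii): the bookkeeping ensuring that coequalisers of $(Gp)^*$-split pairs can be built at the $\TopX$-level and transferred down compatibly with the pullback functors. This rests on the Beck--Chevalley squares above, together with the full faithfulness of $L_B$, which ensures that the lifting to a $p^*$-split pair is genuinely detected by the monadic $p^*$.
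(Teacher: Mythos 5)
Your argument is correct, and it is self-contained where the paper is not: the paper's entire proof is a citation to the analogous statement for $\LaxComma{\Ord}{X}$ in \cite[Theorem~3.3]{CN23}, whose mechanism is essentially the one you spell out. The structural inputs are the same in both cases --- under the section's standing hypothesis that $X$ is a topological $\bigwedge$-semilattice, $G$ is topological, hence preserves pullbacks and coequalisers, and admits the fully faithful left adjoint $L\colon A\mapsto(A,\bot)$ --- and your two Beck--Chevalley identities $(Gp)^*G_B=G_Ap^*$ and $p^*L_B\cong L_A(Gp)^*$ are exactly what turns these inputs into preservation of (effective) descent. Your treatment of the plain descent case is clean: lifting $q\colon C\to B$ to $q\colon(C,\beta q)\to(B,\beta)$ and using that $G$ preserves both the pullback and the resulting coequaliser is precisely the right move. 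In the effective descent case, all three Beck conditions check out as you state them: conservativity of $(Gp)^*$ follows from conservativity of the monadic $p^*$ together with full faithfulness of $L_B$ (which reflects isomorphisms); and in step (iii) the lifted pair $L_Bh_1,L_Bh_2$ is $p^*$-split because $L_A$, like any functor, preserves the (absolute) split coequaliser of $(Gp)^*h_1,(Gp)^*h_2$, so monadicity of $p^*$ creates its coequaliser upstairs, which $G_B$ then sends to the coequaliser of $h_1,h_2$ downstairs. Two cosmetic remarks: the right adjoint $R\colon A\mapsto(A,\top)$ plays no role in your proof and can be dropped; and it is worth flagging explicitly that everything rests on the standing assumption of this section (for general $X$ the functor $G$ need not even preserve pullbacks), which is also why the paper can defer to the ordered-set case.
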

\begin{proof}
  As in \cite[Theorem~3.3]{CN23}.
\end{proof}

Besides \(\Top\), we shall also compare \(\TopX\) with the category
\(\Fam(X)\) of \emph{families in \(X\)} defined as follows: the objects of
\(\Fam(X)\) are families \((x_i)_{i\in I}\) of elements of \(X\) indexed by a
set \(I\), and a morphism \(f \colon(x_i)_{i\in I}\to(y_j)_{j\in J}\) in
\(\Fam(X)\) is given by a map \(f \colon I\to J\) satisfying \(x_i\leq
y_{f(i)}\), for all \(i\in I\). Identities and composition are inherited from
\(\Set\).
\begin{lemma}
  \label{d:lem:4}
  The canonical forgetful functor \(\TopX\to\Fam(X)\) preserves:
  \begin{enumerate}
    \item
      \label{r:item:pullbacks}
      pullbacks;
    \item
      \label{d:item:22}
      regular epimorphisms of type \((B,\beta)\to(1,\gamma)\);
    \item
      \label{d:item:23}
      descent morphisms.
  \end{enumerate}
\end{lemma}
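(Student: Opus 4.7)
The plan is to treat the three parts in order, using throughout that $\TopX \to \Top$ is topological (Theorem~\ref{d:thm:3}) and that $\Fam(X) = \SetX \to \Set$ is topological by the order-theoretic analogue, so that limits are computed as initial lifts and their structure maps as meets. For part~(1), I would observe that the pullback of $f\colon(A,\alpha)\to(C,\gamma)$ and $g\colon(B,\beta)\to(C,\gamma)$ in $\TopX$ has underlying space $A\times_C B$ with structure map $(a,b)\mapsto\alpha(a)\wedge\beta(b)$, while the $\Fam(X)$-pullback admits the same description with $\Top$ replaced by $\Set$; since $\Top\to\Set$ preserves pullbacks, preservation under $\TopX\to\Fam(X)$ follows.

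For part~(2), I would use that coequalisers in $\TopX$ are given by quotient maps in $\Top$ equipped with the left Kan extension of the structure map, while coequalisers in $\Fam(X)$ are given by surjections in $\Set$ equipped with the fibre-wise supremum. A regular epi $q\colon(B,\beta)\to(1,\gamma)$ in $\TopX$ then requires $B\neq\varnothing$ and $\gamma=\Lan_q(\beta)$; since $1$ has only one non-empty open, this Kan extension evaluates to $\gamma(*)=\bigvee_{b\in B}\beta(b)$, which is precisely the regular-epi condition in $\Fam(X)$ at codomain $(1,\gamma)$.

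For part~(3), given a pullback-stable regular epi $p\colon(A,\alpha)\to(B,\beta)$ in $\TopX$, I would prove it remains a pullback-stable regular epi in $\Fam(X)$ via a fibre-wise argument. Given $g\colon(K,\chi)\to(B,\beta)$ in $\Fam(X)$, the $\Fam(X)$-pullback is $(A\times_B K,\,(a,k)\mapsto\alpha(a)\wedge\chi(k))$ with projection $\pi_K\colon(a,k)\mapsto k$; surjectivity of $\pi_K$ will reduce to that of $p$, which holds by Lemma~\ref{d:lem:3}. For each $k\in K$, the morphism $\iota_k\colon(1,\chi(k))\to(B,\beta)$, $*\mapsto g(k)$, is well-defined in $\TopX$ because $\chi(k)\leq\beta(g(k))$; pulling $p$ back along $\iota_k$ in $\TopX$ produces, by part~(1), the object $(p^{-1}(g(k)),\,a\mapsto\alpha(a)\wedge\chi(k))\to(1,\chi(k))$, which is a regular epi in $\TopX$ by pullback-stability of $p$ and hence a regular epi in $\Fam(X)$ by part~(2). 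The resulting identity
\begin{equation*}
  \chi(k)=\bigvee_{a\in p^{-1}(g(k))}\alpha(a)\wedge\chi(k)
\end{equation*}
is exactly the fibre-wise condition needed to conclude $\pi_K$ is a regular epi in $\Fam(X)$.

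The main obstacle will be in part~(3): regular epis in $\TopX$ and in $\Fam(X)$ are governed by genuinely different formulas (a $\Lan$ over open neighbourhoods versus a bare fibre supremum), so no blanket preservation of regular epis holds. The saving grace is that these descriptions agree at one-point codomains, which is precisely part~(2), and that pullback-stability in $\TopX$ converts the general pullback-stability verification in $\Fam(X)$ into exactly such one-point pullbacks.
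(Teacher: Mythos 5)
Your proof is correct and follows essentially the same route as the paper's: both rest on the explicit/adjoint description of pullbacks, the one-point characterisation of regular epimorphisms \(\gamma=\bigvee_{b}\beta(b)\), and pulling the given morphism back along point-inclusions of the codomain. The only cosmetic difference is in part (3), where the paper checks stability by lifting an arbitrary \(\Fam(X)\)-pullback to \(\TopX\) via the discrete topology, while you verify the fibrewise regular-epi condition in \(\Fam(X)\) directly; both arguments work (and note that the supremum formula for \(\Lan\) into a point follows from the universal property alone, so you need not appeal to the injective-space formula of Remark~\ref{d:rem:1}).
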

\begin{proof}
  We confirm \ref{r:item:pullbacks} by noting that the functor \( \TopX \to
  \Fam(X) \) has a left adjoint, mapping each family \( (\alpha(a))_{a \in A}
  \) to the pair \( (A,\alpha) \) where \( A \) has the discrete topology: we
  simply note that, if \( (B,\beta) \) is an object in \( \TopX \), morphisms
  \( (A,\alpha) \to (B,\beta) \) in \( \TopX \) are in a natural bijective
  correspondence with morphisms \( (\alpha(a))_{a \in A} \to (\beta(b))_{b \in
  B} \).

  Regarding \ref{d:item:22}, observe that \(q \colon(B,\beta)\to(1,\gamma)\) is a regular epimorphism in \(\TopX\) if and only if \(B\neq\varnothing\) and
  \begin{displaymath}
    \gamma=\bigvee\{\beta(b)\mid b\in B\}.
  \end{displaymath}
  To see \ref{d:item:23}, let now \(q \colon(B,\beta)\to(C,\gamma)\) be a pullback-stable regular epimorphism in \(\TopX\). Then, for every \(c\in C\),
  \begin{displaymath}
    \gamma(c)=\bigvee\{\beta(b)\mid b\in B,\,f(b)=c\}
  \end{displaymath}
  since in the pullback diagram
  \begin{displaymath}
    \begin{tikzcd}
      (B_{c},\beta) %
      \ar{r}{q} %
      \ar{d}[swap]{} %
      & (1,\gamma(c)) %
      \ar{d}{c} \\
      (B,\beta) %
      \ar{r}[swap]{q} %
      & (C,\gamma) %
    \end{tikzcd}
  \end{displaymath}
  the top row is a regular epimorphism in \(\TopX\). We conclude that \(q
  \colon(\beta(b))_{b\in B}\to(\gamma(c))_{c\in C}\) is a regular epimorphism
  in \(\Fam(X)\). It is indeed a pullback-stable regular epimorphism in
  \(\Fam(X)\) because for every pullback diagram
  \begin{displaymath}
    \begin{tikzcd}
      (\delta(d))_{d\in D} %
      \ar{r}{\pi_{2}} %
      \ar{d}[swap]{\pi_{1}} %
      & (\alpha(a))_{a\in A} %
      \ar{d}{g} \\
      (\beta(b))_{b\in B} %
      \ar{r}[swap]{q} %
      & (\gamma(c))_{c\in C} %
    \end{tikzcd}
  \end{displaymath}
  in \(\Fam(X)\) we may consider the discrete topology on \(A\) and the pullback diagram
  \begin{displaymath}
    \begin{tikzcd} 
      (D,\delta) %
      \ar{r}{\pi_{2}} %
      \ar{d}[swap]{\pi_{1}} %
      & (A,\alpha) %
      \ar{d}{g} \\
      (B,\beta) %
      \ar{r}[swap]{q} %
      & (C, \gamma) %
    \end{tikzcd}
  \end{displaymath}
  in \(\TopX\). Then \(\pi_2 \colon (D, \delta)\to(A,\alpha)\) is a
  pullback-stable regular epimorphism in \(\TopX\) and therefore  \(\pi_2
  \colon (\delta(d))_{d\in D}\to(\alpha(a))_{a\in A}\) is a regular
  epimorphism in \(\Fam(X)\).
\end{proof}

\begin{lemma}
  \label{lem:preserve-term-cod-suff}
  Let \( q \colon (B,\beta) \to (C,\gamma) \) be an effective descent morphism
  in \( \TopX \), and consider the following pullback diagram for each \(c \in
  C \):
  \begin{equation}
    \label{eq:pb-point}
    \begin{tikzcd}
      (\beta^{-1}(c),\beta|_{\beta^{-1}(c)})
        \ar[r,"q|_{\beta^{-1}(c)}"]
        \ar[d]
        & (1,\gamma(c)) \ar[d,"c"] \\
      (B,\beta) \ar[r,swap,"q"]
        & (C,\gamma)
    \end{tikzcd}
  \end{equation}
  We have that \( q|_{\beta^{-1}(c)} \) is an effective descent morphism in \(
  \Fam(X) \) for all \( c \) if and only if \( q \) is an effective descent
  morphism in \( \Fam(X) \).

  \end{lemma}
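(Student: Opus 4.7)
The plan is to exploit the extensivity of \(\Fam(X)\) to split the question of effective descent along the fibers of \(q\), so that both implications in the lemma fall out of one decomposition.

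First, I would verify that \(\Fam(X)\simeq\SetX\) is extensive by the same argument given earlier for \(\TopX\): the category \(\Set\) is extensive and \(\Set(-,X)\) turns coproducts into products in \(\Ord\). Second, I would record the canonical fiber decomposition of \(q\) in \(\Fam(X)\), namely
\[
  (B,\beta)\cong\coprod_{c\in C}(\beta^{-1}(c),\beta|_{\beta^{-1}(c)}),\qquad
  (C,\gamma)\cong\coprod_{c\in C}(1,\gamma(c)),\qquad
  q\cong\coprod_{c\in C}q|_{\beta^{-1}(c)}.
\]
Extensivity then furnishes an equivalence
\(\Comma{\Fam(X)}{(C,\gamma)}\simeq\prod_{c\in C}\Comma{\Fam(X)}{(1,\gamma(c))}\)
under which the pullback functor \(q^{*}\) corresponds to the product of the fiber pullback functors \((q|_{\beta^{-1}(c)})^{*}\).

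From this slice-level decomposition I would deduce that the induced monad on \(\Comma{\Fam(X)}{(B,\beta)}\) splits as a product of the fiber-wise monads, so that its Eilenberg--Moore category \(\mathsf{Desc}_{\Fam(X)}(q)\) decomposes as \(\prod_{c\in C}\mathsf{Desc}_{\Fam(X)}(q|_{\beta^{-1}(c)})\), and the comparison functor \(\mathcal K^{q}\) splits accordingly as \(\prod_{c\in C}\mathcal K^{q|_{\beta^{-1}(c)}}\). The conclusion is then immediate from the observation that a product of functors is an equivalence if and only if every factor is, which yields both directions of the iff statement.

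The step I expect to be the main obstacle is the passage from the slice-category decomposition to the Eilenberg--Moore (descent) level. Extensivity cleanly produces the equivalence of slice categories and its compatibility with the pullback functors, but one still has to check that the monad structures, their algebras, and the comparison functors all decompose compatibly as products; this is precisely the kind of fact that the general preservation techniques developed in \cite{NP24_tmp} (and invoked in the text preceding the lemma) are designed to handle.
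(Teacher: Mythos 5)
Your proposal is correct and follows essentially the same route as the paper: both rest on the observation that, by extensivity of \(\Fam(X)\), the morphism \(q\) is the coproduct of its fibre restrictions, the paper then simply citing that coproducts of effective descent morphisms in an extensive category are effective for descent (your product decomposition of the comparison functors is precisely the proof of that fact) and obtaining the converse from pullback stability rather than from the decomposition. The only step worth flagging in your version is the final one --- a product of functors is an equivalence iff each factor is --- which requires the factor categories to be nonempty; this holds here because slice categories and Eilenberg--Moore categories over them always contain (free) objects.
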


\begin{proof}
  We note that, in an extensive category, coproducts of effective
  descent morphisms are effective for descent, and the underlying morphism \(
  q \colon (B,\beta) \to (C,\gamma) \) in \( \Fam(X) \) is the coproduct of
  the family of effective descent morphisms \( (q|_{\beta^{-1}(c)})_{c \in C} \).

  The converse holds by pullback stability of effective descent morphisms.
\end{proof}

Lemma~\ref{lem:preserve-term-cod-suff} says that the canonical forgetful
functor \( \TopX \to \Fam(X) \) preserves the effective descent morphisms \(
q|_{\beta^{-1}(c)} \) for all \( c \) if and only if it preserves \( q \).
Hence, in order to study the preservation of effective descent morphisms by \(
\TopX \to \Fam(X) \), it is sufficient to analyse whether it preserves
effective descent morphisms of the form \( (B,\beta) \to (1,x) \).

\begin{lemma}
  Let \((B,\beta)\) in \(\TopX\) so that the image of \(\beta \colon B\to X\) has a maximum. Then \(\TopX \to\Fam(X)\) preserves all effective descent morphisms \((B,\beta)\to(1,x)\).
\end{lemma}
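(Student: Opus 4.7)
The plan is to exhibit the induced morphism $q\colon(\beta(b))_{b\in B}\to(x)$ as a \emph{split} epimorphism in $\Fam(X)$; once this is done, the classical fact that split epimorphisms are effective descent morphisms in any category with pullbacks finishes the argument. The role of the maximum hypothesis is precisely to upgrade the regular epimorphism produced by Lemma~\ref{d:lem:4}\ref{d:item:22} to a split one.

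First I would note that $\TopX$ is finitely complete (Corollary~\ref{d:cor:1}), so the effective descent morphism $q\colon(B,\beta)\to(1,x)$ is in particular a pullback-stable regular epimorphism. Lemma~\ref{d:lem:4}\ref{d:item:22} then applies, and the explicit description of such regular epimorphisms recalled in its proof gives
\begin{displaymath}
  x=\bigvee_{b\in B}\beta(b).
\end{displaymath}

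Next I would combine this supremum formula with the maximum hypothesis. If $m$ is the maximum of the image of $\beta$, then $\beta(b)\leq m$ for all $b\in B$ forces $x=\bigvee_b\beta(b)=m$, and since $m$ is attained we have $m=\beta(b_0)$ for some $b_0\in B$. The function $\{\ast\}\to B$ sending $\ast\mapsto b_0$ then defines a morphism $s\colon(x)\to(\beta(b))_{b\in B}$ in $\Fam(X)$ (the relevant inequality $x\leq\beta(b_0)$ is an equality) with $q\cdot s=\id_{(x)}$, so $q$ is split.

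Finally I would invoke the standard fact that a split epimorphism $q$ with section $s$ is always effective for descent: pullback along $s$ yields a functor $s^{*}$ on the relevant slice categories satisfying $s^{*}q^{*}=\id$, so $q^{*}$ is conservative, and a direct application of Beck's monadicity criterion (or an explicit construction of the inverse of the comparison functor using $s$) gives monadicity of $q^{*}$. I do not expect any real obstacle; the whole argument is a straightforward chain of reductions in which the maximum hypothesis plays exactly the role of promoting the supremum $x=\bigvee_b\beta(b)$ from being a mere join to being realised by an element of $B$.
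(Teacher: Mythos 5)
Your proof is correct and follows essentially the same route as the paper's: both hinge on observing that the maximum hypothesis, combined with the regular-epimorphism identity \(x=\bigvee_{b\in B}\beta(b)\), makes the morphism a split epimorphism, and split epimorphisms are effective for descent. The only (immaterial) difference is that you construct the section directly in \(\Fam(X)\), whereas the paper splits \((B,\beta)\to(1,x)\) already in \(\TopX\) and then notes that split epimorphisms are preserved by any functor; your version even makes explicit the step \(x=\beta(b_0)\) that the paper leaves implicit.
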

\begin{proof}
When \(\beta \colon B\to X\) has a maximum, the morphism \((B,\beta)\to(1,x)\) is a split epimorphism, hence it is immediately an effective descent morphism preserved by any functor.
\end{proof}

The following lemma gives a general description of the effective descent morphisms in \(\TopX\) which are preserved by \( \TopX \to \Fam(X) \).

\begin{lemma}
  \label{d:lem:1}
  Let \( (B,\beta) \to (1,x) \) be an effective descent morphism in \( \TopX
  \). The following are equivalent:
  \begin{tfae}
    \item
      \label{tfae:edm-famx}
      \( p \colon (B,\beta) \to (1,x) \) is an effective descent morphism in
      \( \Fam(X) \).
    \item
      \label{tfae:cond}
      For all families of points \( (\theta(b))_{b \in B} \) in \( X \)  with
      \( \theta(b) \leq \beta(b) \) for all \( b \) satisfying \( \beta(b')
      \land \theta(b) = \theta(b') \land \beta(b) \) for all pairs \( b,b' \in
      B \), we have \( \theta \colon B \to X \) continuous.
  \end{tfae}
\end{lemma}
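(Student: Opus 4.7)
Plan: The strategy is to exploit that descent data in \(\Fam(X)\) of the ``trivial'' form \((B,\theta)\xrightarrow{1_B}(B,\beta)\) (with identity descent isomorphism) correspond exactly to families \((\theta(b))_{b\in B}\) with \(\theta\leq\beta\) satisfying the compatibility of \ref{tfae:cond}. Indeed, the pullbacks \(\pi_1^*(B,\theta)\) and \(\pi_2^*(B,\theta)\) in \(\Fam(X)\) share underlying set \(B\times B\) with valuations \(\theta(b)\wedge\beta(b')\) and \(\beta(b)\wedge\theta(b')\) respectively, and equality of these valuations is precisely the condition that the identity on \(B\times B\) be an iso between them.

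For \ref{tfae:edm-famx}\(\Rightarrow\)\ref{tfae:cond}, I would take such a \(\theta\), form the corresponding descent datum in \(\Fam(X)\), and invoke \ref{tfae:edm-famx} to descend it to some \((W,\omega)\to(1,x)\). Since \(p^*(W,\omega)\) has underlying set \(B\times W\) with projection to \(B\), any iso \(p^*(W,\omega)\cong(B,\theta)\) in \(\Comma{\Fam(X)}{(B,\beta)}\) forces \(|W|=1\); hence \(\theta(b)=\beta(b)\wedge\omega\) for a single \(\omega\in X\), and continuity of \(\theta\) follows from continuity of \(\beta\) and of the meet on \(X\).

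For the converse \ref{tfae:cond}\(\Rightarrow\)\ref{tfae:edm-famx}, I would treat an arbitrary descent datum \(q\colon(U,\mu)\to(B,\beta)\) with \(\phi\colon\pi_1^*(U,\mu)\to\pi_2^*(U,\mu)\) in \(\Fam(X)\) by first trivializing it: since \(\phi\) lies over \(B\times B\), it restricts on each pair \((b,b')\) to a bijection \(\phi_{b,b'}\colon q^{-1}(b)\to q^{-1}(b')\), and the cocycle condition makes these coherent. Picking \(b_0\in B\) (nonempty as \(p\) is a regular epi) and \(W=q^{-1}(b_0)\) gives a bijection \(B\times W\cong U\) over \(B\) via \((b,w)\mapsto\phi_{b_0,b}(w)\). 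Setting \(\mu_w(b)=\mu(\phi_{b_0,b}(w))\), each \(\mu_w\colon B\to X\) satisfies \(\mu_w\leq\beta\) and, from the iso condition on \(\phi_{b,b'}\), \(\beta(b')\wedge\mu_w(b)=\mu_w(b')\wedge\beta(b)\). Hypothesis \ref{tfae:cond} then makes each \(\mu_w\) continuous, so \((B,\mu_w)\to(B,\beta)\) with identity descent iso is a descent datum in \(\TopX\). Effective descent in \(\TopX\) produces a unique \((1,\omega_w)\to(1,x)\) with \(\mu_w(b)=\beta(b)\wedge\omega_w\); assembling \(\omega\colon w\mapsto\omega_w\) gives \((W,\omega)\to(1,x)\) with \(p^*(W,\omega)\cong(U,\mu)\) compatibly with descent data. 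Full faithfulness of \(\mathcal{K}^p\) in \(\Fam(X)\) reduces, via the same decomposition, to full faithfulness of \(\mathcal{K}^p\) in \(\TopX\): a descent-compatible morphism \(p^*(1,\omega_1)\to p^*(1,\omega_2)\) in \(\Fam(X)\) has continuous underlying map (identity on \(B\)), hence descends to \(\omega_1\leq\omega_2\) by effective descent in \(\TopX\).

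\emph{Main obstacle.} The delicate step is the trivialization \(U\cong B\times W\) in the converse direction, which reduces a general descent datum in \(\Fam(X)\) to a \(W\)-indexed family of ``trivial'' descent data of the shape handled by \ref{tfae:cond}. The cocycle identity for \(\phi\) is the substance of this reduction, and explains why a hypothesis concerning so restrictive a class of families as in \ref{tfae:cond} already suffices to govern all descent data for \(p\) in \(\Fam(X)\).
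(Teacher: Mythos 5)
Your proposal is correct, and its core idea coincides with the paper's: condition \ref{tfae:cond} describes exactly the descent data for \(p\) in \(\Fam(X)\) whose underlying bundle is \(\id\colon(B,\theta)\to(B,\beta)\) --- the \emph{connected} descent data in the sense of \cite{Pre23,Pre24} --- and effectiveness of \(p\) in \(\TopX\) lets one pass between these and their continuous counterparts. Where the two arguments part ways is in justifying that this restricted class controls all of \(\mathsf{Desc}_{\Fam(X)}(p)\). The paper outsources that reduction to the familial-descent machinery of \cite{Pre23,Pre24}: it only checks that each connected datum descends (to \((1,\bigvee_{b}\theta(b))\)) and lets the cited results do the rest. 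You carry out the reduction by hand: the unit and cocycle conditions trivialise an arbitrary descent datum \(q\colon(U,\mu)\to(B,\beta)\) as \(B\times W\) with \(W=q^{-1}(b_0)\) (using \(B\neq\varnothing\), which indeed follows from \(p\) being a regular epimorphism), splitting it into a \(W\)-indexed coproduct of connected data to which \ref{tfae:cond} and effectiveness in \(\TopX\) apply; you then reassemble and also treat full faithfulness explicitly, which the paper leaves implicit (it also follows from Lemma~\ref{d:lem:4}, since \(\TopX\to\Fam(X)\) preserves descent morphisms). In the forward direction the two proofs are essentially the same: you descend the datum in \(\Fam(X)\) and read off \(\theta=\beta\wedge\omega\) with \(\omega\) constant, using continuity of \(\wedge\) on the topological \(\bigwedge\)-semilattice \(X\); the paper phrases this via essential surjectivity of the induced functor \(\tilde H\) between descent categories. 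Your version is more self-contained and elementary; the paper's is shorter and situates the lemma within the general framework of connected descent data that it reuses afterwards.
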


\begin{proof}
  First, we note that the following diagram commutes, up to isomorphism:
  \begin{equation}
    \label{eq:em-comp}
    \begin{tikzcd}
      \Comma{\TopX}{(1,x)} \ar[r,"\mathcal K^p"] \ar[d,swap,"H"]
        & \mathsf{Desc}_{\TopX}(p) \ar[d,"\tilde H"] \\
      \Comma{\Fam(X)}{(1,x)} \ar[r,swap,"\mathcal K^p"]
        & \mathsf{Desc}_{\Fam(X)}(p)
    \end{tikzcd}
  \end{equation}
  Moreover, we know that \( H \) is essentially surjective.

  Second, we note that a family of points \( (\theta(b))_{b \in B} \) in \( X
  \)  with \( \theta(b) \leq \beta(b) \) for all \( b \) satisfying \(
  \beta(b') \land \theta(b) = \theta(b') \land \beta(b) \) is precisely a
  \textit{connected} descent datum \( \id \colon (B,\theta) \to (B,\beta) \)
  in \( \Fam(X) \) for \( p \) in the sense of \cite{Pre23,Pre24}.

  Let us assume \ref{tfae:edm-famx} holds, in which case \( \tilde H \) must
  be essentially surjective. We note that descent data for \( p \) in \( \TopX
  \) must be of the form \( \pi_2 \colon (B\times C,\gamma) \to (B, \beta) \),
  where \( \pi_2 \) is the projection \( B\times C \to B \). Thus, given
  descent data \( \id \colon (B,\theta) \to (B,\beta)\) for \( p \) in \(
  \Fam(X) \), the only topology on the underlying set of \( B \) for which \(
  \id \colon (B,\theta) \to (B,\beta) \) still defines descent data for \( p
  \) in \( \TopX \) is the topology on \( B \) itself, meaning \( \theta \)
  must be continuous, thereby confirming \ref{tfae:cond}.

  Conversely, if \ref{tfae:cond} holds -- that is, for any connected descent
  data \( \id \colon (B,\theta) \to (B,\beta) \) for \( p \) in \( \Fam(X) \)
  we have \( \theta \) continuous -- we have that \( \id \colon (B,\theta) \to
  (B,\beta) \) determines descent data for \( p \) in \( \TopX \). As \( p \)
  is effective for descent in \( \TopX \), we conclude that this descent data
  corresponds to the bundle \( (1,\bigvee_{b \in B}\theta(b)) \to (1,x) \),
  whose pullback along \( p \) guarantees that \( \theta(b) = \beta(b) \land
  \bigvee_{b' \in B} \theta(b') \) for all \( b \in B \). This confirms
  that \( p \) is effective for descent in \( \Fam(X) \).
\end{proof}

Following \cite{CP25}, we consider the (pseudo)pullback
\begin{displaymath}
  \begin{tikzcd}
    \Top\times_{\Set}\Fam(X) %
    \ar{r}{\rho_{2}} %
    \ar{d}[swap]{\rho_{1}} %
    & \Fam(X) %
    \ar{d}{} \\
    \Top %
    \ar{r}[swap]{} %
    & \Set %
  \end{tikzcd}
\end{displaymath}
and the canonical functor
\begin{displaymath}
  H \colon\TopX \longrightarrow \Top\times_{\Set}\Fam(X).
\end{displaymath}
induced by the forgetful functors
\begin{displaymath}
  \TopX \longrightarrow\Top
  \quad\text{and}\quad
  \TopX \longrightarrow\Fam(X).
\end{displaymath}

\begin{proposition}
  \label{d:prop:3}
  \begin{enumerate}
  \item\label{d:item:17} A morphism \(f\) in \(\Top\times_{\Set}\Fam(X)\) is effective for descent if and only if \(\rho_2(f)\) is effective for descent in \(\Fam(X)\) and \(\rho_1(f)\) is effective for descent in \(\Top\).
  \item\label{d:item:18} The functor \(\TopX \to \Top\times_{\Set}\Fam(X)\) is fully faithful and preserves pullbacks.
  \end{enumerate}
\end{proposition}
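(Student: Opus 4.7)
The plan for \textbf{part (2)} is to unwind definitions and compare explicit pullback constructions. For fully faithfulness: a morphism in \(\Top\times_\Set\Fam(X)\) from \(H(A,\alpha)\) to \(H(B,\beta)\) is a pair consisting of a continuous map \(f\colon A\to B\) and a morphism \((\alpha(a))_{a\in A}\to(\beta(b))_{b\in B}\) in \(\Fam(X)\) with matching underlying function, which is exactly the data of a continuous map \(f\) satisfying \(\alpha(a)\leq\beta(f(a))\) for all \(a\); this is precisely a morphism in \(\TopX\). For pullback preservation, topologicity of \(\TopX\to\Top\) (which applies since \(X\) is a topological \(\bigwedge\)-semilattice) describes the pullback of \(f\colon(A,\alpha)\to(C,\gamma)\) and \(g\colon(B,\beta)\to(C,\gamma)\) in \(\TopX\) as the topological pullback \(A\times_C B\) equipped with the structure map \(\alpha\pi_1\sqcap\beta\pi_2\). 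Pullbacks in \(\Top\times_\Set\Fam(X)\) are computed componentwise, giving the same \(A\times_C B\) in the \(\Top\)-part and the set-theoretic pullback with pointwise meet \((\alpha(a)\wedge\beta(b))_{(a,b)\in A\times_C B}\) in the \(\Fam(X)\)-part. The two descriptions coincide under \(H\).

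For \textbf{part (1)}, the approach is to leverage that limits and slice categories in a pseudopullback of categories are computed componentwise. For any object \(V\) of \(\Top\times_\Set\Fam(X)\), there is a canonical equivalence
\begin{displaymath}
  (\Top\times_\Set\Fam(X))/V
  \;\simeq\;
  (\Top/\rho_1 V)\times_{\Set/|V|}(\Fam(X)/\rho_2 V),
\end{displaymath}
under which the pullback functor along \(f\colon U\to V\) corresponds to the componentwise functor \(\rho_1(f)^*\times\rho_2(f)^*\) over \(|f|^*\). By the Bénabou--Roubaud identification of \(\mathsf{Desc}\) with the Eilenberg--Moore category of the pullback-induced monad, this yields an equivalence
\begin{displaymath}
  \mathsf{Desc}_{\Top\times_\Set\Fam(X)}(f)
  \;\simeq\;
  \mathsf{Desc}_\Top(\rho_1 f)\times_{\mathsf{Desc}_\Set(|f|)}\mathsf{Desc}_{\Fam(X)}(\rho_2 f),
\end{displaymath}
under which the comparison \(\mathcal K^f\) is the induced morphism of pseudopullbacks built from the component comparisons. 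Since every surjection in \(\Set\) is effective for descent and any effective descent morphism in \(\Top\times_\Set\Fam(X)\) is surjective on underlying sets, the \(\Set\)-component of this comparison is trivially an equivalence in the cases at stake; hence \(\mathcal K^f\) is an equivalence if and only if both \(\mathcal K^{\rho_1 f}\) and \(\mathcal K^{\rho_2 f}\) are, using that pseudopullbacks of equivalences in \(\catfont{Cat}\) are again equivalences.

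The main obstacle is verifying carefully that the comparison \(\mathcal K^f\) really does decompose as a pseudopullback of the component comparisons, which requires attention to the coherence data inherent in the pseudopullback and Eilenberg--Moore constructions. I would address this by appealing to the general pseudopullback-of-categories machinery referenced in the paper, in particular the techniques developed in \cite{CP25} and \cite{NP24_tmp}, rather than redeveloping a full Beck monadicity argument from scratch.
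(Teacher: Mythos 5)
Your part (2) is correct and amounts to the explicit verification that the paper leaves as ``clear'', and the architecture of your part (1) --- identify the descent factorisation of \(f\) in \(\Top\times_{\Set}\Fam(X)\) with the pseudopullback of the descent factorisations of \(\rho_1 f\), \(\rho_2 f\) and \(|f|\) --- is the same route the paper takes, except that the paper simply cites \cite[Corollaries~2.3 and~2.6]{CJ24} and \cite[Corollary~9.6]{Nun18} for the two implications rather than re-deriving the decomposition. Your ``if'' direction is sound modulo the deferred coherence check.

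There is, however, a genuine gap in your ``only if'' direction. The principle you invoke --- a pseudopullback of equivalences in \(\catfont{Cat}\) is an equivalence --- yields only one implication: if \(\mathcal K^{\rho_1 f}\), \(\mathcal K^{\rho_2 f}\) and \(\mathcal K^{|f|}\) are equivalences, then so is \(\mathcal K^{f}\). It does not yield the converse, and the converse is false at this level of generality: a pseudonatural transformation of cospans can induce an equivalence on pseudopullbacks without its outer components being equivalences. (For instance, map the cospan \(1\xrightarrow{\,0\,}\mathbf{2}\xleftarrow{\,0\,}1\), with \(\mathbf{2}\) the arrow category, to \(1\xrightarrow{\,0\,}\mathbf{2}\xleftarrow{\,\id\,}\mathbf{2}\) via the identities on the first two legs and \(0\colon 1\to\mathbf{2}\) on the third; both pseudopullbacks are equivalent to \(1\), yet the third component is not an equivalence.) Hence the statement that \(\rho_1\) and \(\rho_2\) \emph{preserve} effective descent morphisms is not formal: it requires additional properties of the legs \(\Top\to\Set\) and \(\Fam(X)\to\Set\) (that they are suitably surjective amnestic isofibrations preserving the relevant limits and colimits), and it is exactly the content of \cite[Corollary~2.3]{CJ24}, which is what the paper cites for this half. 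Your closing appeal to the machinery of \cite{CP25} and \cite{NP24_tmp} is the right instinct, but as written it is aimed only at the coherence of the decomposition of \(\mathcal K^{f}\); it would need to be extended to cover this preservation statement, since the justification you actually give does not establish it.
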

\begin{proof}
  Regarding \ref{d:item:17}, both functors \(\rho_1\) and \(\rho_2\) preserve
  effective descent morphism by \cite[Corollary~2.3]{CJ24}. On
  the other hand, if \(\rho_1(f)\) and \(\rho_2(f)\) are effective for descent
  in \(\Top\) and \(\Fam(X)\), respectively, then either
  \cite[Corollary~2.6]{CJ24} or \cite[Corollary 9.6]{Nun18} guarantee that
  \(f\) is effective for descent in \(\Top\times_{\Set}\Fam(X)\). The
  assertion \ref{d:item:18} is clear.
\end{proof}

\begin{corollary}
  \label{d:cor:6}
  Let \(f \colon(A,\alpha)\to(B,\beta)\) be a morphism in \(\TopX\) so that \(f \colon (\alpha(a))_{a\in A}\to(\beta(b))_{b\in B}\) is effective for descent in \(\Fam(X)\). Then \(f\) is effective for descent in \(\TopX\) if and only if \(f \colon A\to B\) is effective for descent in \(\Top\) and, for every pullback
  \begin{equation}
    \label{d:eq:4}
    \begin{tikzcd}
      H(D,\delta) %
      \ar{r}{\pi_2} %
      \ar{d}[swap]{\pi_1} %
      & (C,(\gamma_c)_{c\in C}) %
      \ar{d}{g} \\
      H(A,\alpha) %
      \ar{r}[swap]{Hf} %
      & H(B,\beta) %
    \end{tikzcd}
  \end{equation}
  in \(\Top\times_{\Set}\Fam(X)\), the family \(\gamma=(\gamma_c)_{c\in C}\) is a continuous map \(\gamma \colon C\to X\).
\end{corollary}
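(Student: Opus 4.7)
The plan is to reduce the corollary to a direct comparison between effective descent in \(\TopX\) and in \(\Top\times_{\Set}\Fam(X)\) via the functor \(H\), exploiting the two properties of \(H\) from Proposition~\ref{d:prop:3}. Using Proposition~\ref{d:prop:3}\ref{d:item:17} together with the hypothesis that \(f\colon(\alpha(a))_{a\in A}\to(\beta(b))_{b\in B}\) is effective for descent in \(\Fam(X)\), I observe that \(Hf\) is effective for descent in \(\Top\times_{\Set}\Fam(X)\) if and only if \(f\colon A\to B\) is effective for descent in \(\Top\). This reduces the corollary to the claim that \(f\) is effective for descent in \(\TopX\) if and only if \(Hf\) is effective for descent in \(\Top\times_{\Set}\Fam(X)\) and the pullback condition holds.

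For the forward direction, assume \(f\) is effective for descent in \(\TopX\). Lemma~\ref{d:lem:3} immediately gives that \(f\colon A\to B\) is effective for descent in \(\Top\). To verify the pullback condition for a diagram of the form~\eqref{d:eq:4}, I would interpret it as providing canonical descent data on \(H(D,\delta)\) for \(Hf\). Since \(H\) is fully faithful and preserves pullbacks (Proposition~\ref{d:prop:3}\ref{d:item:18}), pullbacks of objects in the essential image of \(H\) stay in that image, and morphisms between such pullbacks are uniquely images of morphisms in \(\TopX\); hence these data lift to descent data on \((D,\delta)\) for \(f\) in \(\TopX\). Effectivity of \(f\) in \(\TopX\) then supplies a bundle \((C',\gamma')\to(B,\beta)\) in \(\TopX\) realizing them. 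Applying \(H\), the bundle \(H(C',\gamma')\to H(B,\beta)\) induces the same descent data on \(H(D,\delta)\) as the given \(g\colon(C,(\gamma_c))\to H(B,\beta)\); by effectivity of \(Hf\) the two bundles are isomorphic over \(H(B,\beta)\), which identifies their underlying topological spaces and forces \(\gamma=\gamma'\), showing that \(\gamma\colon C\to X\) is continuous.

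For the converse, suppose \(f\colon A\to B\) is effective for descent in \(\Top\) and the pullback condition holds. Given descent data on \((D,\delta)\to(A,\alpha)\) for \(f\) in \(\TopX\), I transport them via \(H\) to descent data for \(Hf\) on \(H(D,\delta)\). Effectivity of \(Hf\) produces a bundle \(g\colon(C,(\gamma_c))\to H(B,\beta)\) whose pullback along \(Hf\) is \(H(D,\delta)\); this is precisely a pullback square of the form~\eqref{d:eq:4}, and the hypothesis forces \(\gamma\colon C\to X\) to be continuous, so that \((C,\gamma)\in\TopX\). Since \(H\) preserves pullbacks, the pullback of \(f\) and \(g\) in \(\TopX\) has image \(H(D,\delta)\) and realizes the original descent data. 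Fully faithfulness of \(H\), combined with effectivity of \(Hf\), yields the corresponding uniqueness required for the comparison functor of \(f\) to be an equivalence.

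The main technical hurdle is the bookkeeping needed to move descent data back and forth across \(H\): one must verify that the canonical descent datum on a pullback whose apex lies in the image of \(H\) is itself the image of a descent datum in \(\TopX\), and that compatibility of descent data is preserved and reflected. Both reduce to the fact that pullbacks of objects in the essential image of \(H\) remain in that image (from pullback preservation), combined with fully faithfulness of \(H\) to identify isomorphisms of bundles in the two categories.
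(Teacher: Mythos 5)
Your argument is correct and follows essentially the same route as the paper: the paper simply combines Proposition~\ref{d:prop:3} with the general transfer criterion of Janelidze--Tholen (\cite[Proposition~2.6]{JT94}) for a fully faithful, pullback-preserving functor whose image of \(f\) is effective for descent, whereas you reprove that criterion inline by shuttling descent data across \(H\). One small caution: your blanket claim that ``pullbacks of objects in the essential image of \(H\) stay in that image'' is only true for pullbacks of cospans lying entirely in the image (which is all you actually use); for a pullback of \(Hf\) along an arbitrary \(g\) it fails in general---indeed that failure is precisely what the condition on~\eqref{d:eq:4} is detecting.
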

\begin{proof}
  First note that \(f\) being effective for descent in \(\TopX\) implies that \(f\) is effective descent in \(\Top\) (by Lemma~\ref{d:lem:3}) and, by Proposition~\ref{d:prop:3}~\ref{d:item:17}, it is also effective for descent in \(\Top\times_{\Set}\Fam(X)\). The assertion follows now from \cite[Proposition~2.6]{JT94} using Proposition~\ref{d:prop:3}.
\end{proof}

Recall from Section~\ref{sec:topologicity} that \(e_X \colon X\to UX\) has a left adjoint in \(\Ord\) if and only if every ultrafilter in \(X\) has a smallest convergence point with respect to the natural order in \(X\). We say that \emph{binary infima preserve smallest convergence points in \(X\)} whenever \(e_X \colon X\to UX\) has a left adjoint \(\sigma \colon UX\to X\) in \(\Ord\) and the diagram
\begin{displaymath}
  \begin{tikzcd}
    U(X\times X)%
    \ar{r}{U\wedge}%
    \ar{d}[swap]{\mathrm{can}}%
    & UX%
    \ar{dd}{\sigma}\\
    UX\times UX%
    \ar{d}[swap]{\sigma\times\sigma}\\
    X\times X%
    \ar{r}[swap]{\wedge}%
    & X
  \end{tikzcd}
\end{displaymath}
commutes.

\begin{lemma}
  \label{d:lem:2}
  Assume that binary infima preserve smallest convergence points in \(X\). In
  the pullback~\eqref{d:eq:4}, let \(\mathfrak{w}\in UD\), and put
  \(\mathfrak{a}=U\pi_1(\mathfrak{w})\) and
  \(\mathfrak{c}=U\pi_2(\mathfrak{w})\). Then
  \begin{displaymath}
    \sigma(U\delta(\mathfrak{w}))
    =\sigma(U\alpha(\mathfrak{a}))\wedge\sigma(U\gamma(\mathfrak{c})).
  \end{displaymath}
\end{lemma}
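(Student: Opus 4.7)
The plan is to unfold $\delta$ as an explicit composite that reflects the structure of the pullback in $\Top\times_{\Set}\Fam(X)$, and then push $\mathfrak{w}$ forward through the ultrafilter functor $U$, combining functoriality with the hypothesis on $\sigma$. First I would pin down $\delta$ concretely: pullbacks of families in $\Fam(X)$ over a common codomain are computed by binary infima (the pullback of $(x_i)_{i\in I}\to(z_k)_{k\in K}$ and $(y_j)_{j\in J}\to(z_k)_{k\in K}$ is the family $(x_i\wedge y_j)$ indexed by $I\times_K J$), and since $H$ preserves pullbacks by Proposition~\ref{d:prop:3}\ref{d:item:18}, this gives $\delta(d)=\alpha(\pi_1(d))\wedge\gamma(\pi_2(d))$ for every $d\in D$. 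Equivalently, as a function on underlying sets,
\[
  \delta = {\wedge}\circ(\alpha\times\gamma)\circ\langle\pi_1,\pi_2\rangle\colon D\to X.
\]

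Applying $U$ to this identity and evaluating at $\mathfrak{w}$, I obtain $U\delta(\mathfrak{w}) = U{\wedge}(\mathfrak{z})$, where $\mathfrak{z} = U(\alpha\times\gamma)(U\langle\pi_1,\pi_2\rangle(\mathfrak{w}))\in U(X\times X)$. Functoriality of $U$ applied to the product projections $\mathrm{pr}_1,\mathrm{pr}_2\colon X\times X\to X$ then yields $U\mathrm{pr}_1(\mathfrak{z}) = U\alpha(U\pi_1(\mathfrak{w})) = U\alpha(\mathfrak{a})$ and, symmetrically, $U\mathrm{pr}_2(\mathfrak{z}) = U\gamma(\mathfrak{c})$. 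Finally, the commutativity of the square in the definition of ``binary infima preserve smallest convergence points in $X$'', applied to $\mathfrak{z}$, gives
\[
  \sigma(U{\wedge}(\mathfrak{z})) = \sigma(U\mathrm{pr}_1(\mathfrak{z}))\wedge\sigma(U\mathrm{pr}_2(\mathfrak{z})) = \sigma(U\alpha(\mathfrak{a}))\wedge\sigma(U\gamma(\mathfrak{c})),
\]
which, chained with the previous displays, yields the claimed equality.

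The only non-trivial step is the first, namely the identification of $\delta$ via the pullback description in $\Fam(X)$ together with the pullback-preservation of $H$; everything after that is routine bookkeeping with $U$ and a single invocation of the hypothesis.
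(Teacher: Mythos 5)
Your proof is correct and follows essentially the same route as the paper's: the paper's argument is precisely the commutative diagram factoring $U\delta$ as $U{\wedge}\circ U(\alpha\times\gamma)\circ U\langle\pi_1,\pi_2\rangle$ and then pasting the naturality square for $U(X\times X)\to UX\times UX$ with the hypothesis square for $\sigma$. The only quibble is that the identity $\delta(d)=\alpha(\pi_1(d))\wedge\gamma(\pi_2(d))$ is better justified by the fact that pullbacks in the pseudopullback $\Top\times_{\Set}\Fam(X)$ are computed componentwise (so the $\Fam(X)$-component is the infimum-indexed pullback you describe) rather than by Proposition~\ref{d:prop:3}\ref{d:item:18}, which concerns $H$ sending pullbacks of $\TopX$ into $\Top\times_{\Set}\Fam(X)$; the paper simply takes this formula for $\delta$ as read.
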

\begin{proof}
  Observe that in the diagram
  \begin{displaymath}
    \begin{tikzcd}[column sep=large]
      UD\ar{r}%
      & U(A\times C)%
      \ar{r}{U(\alpha\times\gamma)}\ar{d}%
      & U(X\times X)
      \ar{r}{U{\wedge}}\ar{d}%
      & UX\ar{dd}{\sigma}\\
      & UA\times UC%
      \ar{r}[swap]{U\alpha\times U\gamma}%
      & UX\times UX%
      \ar{d}[swap]{\sigma\times\sigma}\\
      && X\times X%
      \ar{r}[swap]{\wedge} & X
    \end{tikzcd}
  \end{displaymath}
  both squares commute, the top row represents \(U\delta \colon UD\to UX\) and the map corresponding to the bottom path sends \(\mathfrak{w}\in UD\) to \(\sigma(U\alpha(\mathfrak{a}))\wedge\sigma(U\gamma(\mathfrak{c}))\).
\end{proof}

\begin{theorem}
  \label{d:thm:7}
  Assume that binary infima preserve smallest convergence points in the topological \(\bigwedge\)-semilattice \(X\). Let \(f \colon(A,\alpha)\to(B,\beta)\) be a morphism in \(\TopX\) so that \(f \colon (\alpha(a))_{a\in A}\to(\beta(b))_{b\in B}\) is effective for descent in \(\Fam(X)\). Then \(f \colon(A,\alpha)\to(B,\beta)\) is effective for descent in \(\TopX\) if and only if the following two conditions hold.
  \begin{enumerate}
  \item\label{d:item:14} \(f \colon A\to B\) is effective for descent in \(\Top\).
  \item\label{d:item:15} For all ultrafilter convergences \(\mathfrak{b}\leadsto b\) in \(B\) and for all \(w\in X\) with \(w\leq  \sigma(U\beta(\mathfrak{b}))\), we have
    \begin{equation}
      \label{r:eq:distr-cond}
      w = \bigvee_{%
        \substack{%
          \mathfrak a \leadsto a \\
          Uf(\mathfrak{a}) = \mathfrak{b}, \\
          f(a) = b}}
      w \wedge \sigma(U\alpha(\mathfrak{a})).
    \end{equation}
  \end{enumerate}
\end{theorem}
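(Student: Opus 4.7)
The plan is to invoke Corollary~\ref{d:cor:6}, which---under the standing hypothesis that $f$ is effective for descent in $\Fam(X)$---reduces the theorem to showing that condition~(1) together with~\eqref{r:eq:distr-cond} is equivalent to the pullback condition that, for every pullback~\eqref{d:eq:4} in $\Top\times_\Set\Fam(X)$ (so in particular with $\delta\colon D\to X$ continuous), the family $(\gamma_c)_{c\in C}$ defines a continuous map $\gamma\colon C\to X$. I will translate this pullback condition into ultrafilter-theoretic language using Lemma~\ref{d:lem:2} and the fact, provided by Theorem~\ref{d:thm:3}, that $e_X\colon X\to UX$ admits a left adjoint $\sigma\colon UX\to X$: a map $\theta\colon Z\to X$ is then continuous if and only if $\sigma(U\theta(\mathfrak{z}))\leq\theta(z)$ for every ultrafilter convergence $\mathfrak{z}\leadsto z$ in $Z$.

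For the ``if'' direction I fix a pullback~\eqref{d:eq:4} with $\delta$ continuous together with an ultrafilter convergence $\mathfrak{c}\leadsto c$ in $C$, and I set $\mathfrak{b}:=Ug(\mathfrak{c})\leadsto b:=g(c)$ and $w:=\sigma(U\gamma(\mathfrak{c}))$. The inequality $\gamma\leq\beta\cdot g$ combined with the monotonicity of $\sigma$ gives $w\leq\sigma(U\beta(\mathfrak{b}))$, so condition~\eqref{r:eq:distr-cond} is applicable. For each lift $\mathfrak{a}\leadsto a$ of $\mathfrak{b}\leadsto b$ through $f$ (guaranteed by the biquotient character of $f$ from~(1)), I would choose $\mathfrak{w}\leadsto(a,c)$ on $D=A\times_B C$ projecting to $\mathfrak{a}$ and $\mathfrak{c}$; Lemma~\ref{d:lem:2} and the continuity of $\delta$ then give $\sigma(U\alpha(\mathfrak{a}))\wedge w=\sigma(U\delta(\mathfrak{w}))\leq\delta(a,c)\leq\gamma(c)$. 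Taking the supremum over $\mathfrak{a}$ and invoking~\eqref{r:eq:distr-cond} yields $w\leq\gamma(c)$, proving continuity of $\gamma$ at $c$.

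For the ``only if'' direction, condition~(1) is immediate from Lemma~\ref{d:lem:3}. To derive~(2), fix $\mathfrak{b}\leadsto b$ in $B$ and $w\leq\sigma(U\beta(\mathfrak{b}))$, and set $w_0:=\bigvee_{\mathfrak{a}}w\wedge\sigma(U\alpha(\mathfrak{a}))$; since $w_0\leq w$ trivially, it suffices to prove the reverse inequality. My plan is to build a test datum $(C,\gamma,g)$ whose pullback $(D,\delta)$ lies in $\TopX$, with a point $c_0\in C$ carrying $\gamma(c_0)=w_0$ and an ultrafilter $\mathfrak{c}\leadsto c_0$ satisfying $\sigma(U\gamma(\mathfrak{c}))=w$: applying Corollary~\ref{d:cor:6} to the resulting pullback diagram will then force $\gamma$ to be continuous at $c_0$, giving $w\leq w_0$. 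The natural candidate is $C:=B\cup\{c_0\}$ with the topology whose open sets are the $V\subseteq C$ with $V\cap B$ open in $B$ and with $c_0\in V\Rightarrow b\in V$, $g$ the identity on $B$ extended by $g(c_0):=b$, and $\gamma$ given by $\gamma(c_0):=w_0$ and $\gamma(b'):=w\wedge\beta(b')$; extending $\mathfrak{b}$ to the ultrafilter $\mathfrak{c}$ on $C$ with $B\in\mathfrak{c}$ produces $\mathfrak{c}\leadsto c_0$, $Ug(\mathfrak{c})=\mathfrak{b}$, and, via the binary-infima hypothesis, $\sigma(U\gamma(\mathfrak{c}))=w\wedge\sigma(U\beta(\mathfrak{b}))=w$.

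The hard part is verifying that $\delta\colon D\to X$ is genuinely continuous everywhere on $D$, because $\gamma$ is designed to be discontinuous at $c_0$ unless~(2) holds. This hinges on the binary-infima hypothesis, which I will invoke via Lemma~\ref{d:lem:2} together with the meet-with-constant identity $\sigma(U(w\wedge-)(\mathfrak{u}))=w\wedge\sigma(\mathfrak{u})$ to evaluate $\sigma(U\delta(\mathfrak{w}))$ explicitly and bound it by $\alpha(a)\wedge w_0$ on ultrafilters sitting over $b$: the definition of $w_0$ as $\bigvee_{\mathfrak{a}}w\wedge\sigma(U\alpha(\mathfrak{a}))$ is precisely tailored to make this bound tight, so that $\delta$ is continuous regardless of whether~(2) holds, while $\gamma$ becomes continuous at $c_0$ exactly when $w\leq w_0$.
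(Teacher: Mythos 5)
Your ``if'' direction is correct and is essentially the paper's own argument: reduce via Corollary~\ref{d:cor:6}, use weak preservation of pullbacks by $U$ to lift each $\mathfrak{a}$ to some $\mathfrak{w}\in UD$ over $(\mathfrak{a},\mathfrak{c})$, and combine Lemma~\ref{d:lem:2} with continuity of $\delta$ and condition~\eqref{r:eq:distr-cond} to get $\sigma(U\gamma(\mathfrak{c}))\leq\gamma(c)$. The ``only if'' direction also follows the paper's strategy (Lemma~\ref{d:lem:3} for condition~(1), then a test object $(C,\gamma,g)$ fed into Corollary~\ref{d:cor:6}), but your construction of $C$ has a genuine gap: the topology you put on $C=B\cup\{c_0\}$ is too coarse. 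With open sets ``$V\cap B$ open in $B$ and $c_0\in V\Rightarrow b\in V$'', the open neighbourhoods of $c_0$ are exactly $\{c_0\}\cup W$ with $W$ an open neighbourhood of $b$ in $B$; consequently \emph{every} ultrafilter $\mathfrak{c}'$ on $B$ with $\mathfrak{c}'\leadsto b$ converges to $c_0$ in $C$, not only $\mathfrak{b}$. Now take any $\mathfrak{a}\in UA$ with $\mathfrak{a}\leadsto a'$, $f(a')=b$ and $Uf(\mathfrak{a})=\mathfrak{c}'\neq\mathfrak{b}$, and push it onto the graph part $D_1\subseteq D$: the resulting ultrafilter converges to $(a',c_0)\in D_2$, and continuity of $\delta$ there demands $\sigma(U\alpha(\mathfrak{a}))\wedge w\leq w_0$. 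But $w_0$ is a supremum only over lifts of the \emph{specific} ultrafilter $\mathfrak{b}$, so this inequality is not available: it is itself (an instance of) the statement you are trying to prove. Hence your claim that ``$\delta$ is continuous regardless of whether~(2) holds'' is false for your topology --- when~(2) fails one can arrange a lift of some $\mathfrak{c}'\leadsto b$ witnessing discontinuity of $\delta$ --- and any attempt to prove it under the hypothesis that $f$ is effective for descent is circular, since effective descent can only be exploited through Corollary~\ref{d:cor:6} \emph{after} one knows the pullback lands in $\TopX$.

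The paper's construction avoids exactly this trap by choosing a finer topology: $C=B+\{\star\}$ where $B$ sits as an open \emph{discrete} subspace and the only non-trivial ultrafilter convergence is $\mathfrak{b}\leadsto\star$ (so open sets are those $V$ with $\star\in V\Rightarrow V\cap B\in\mathfrak{b}$; note $g$ is still continuous because $\mathfrak{b}\leadsto b$ puts every open neighbourhood of $b$ inside $\mathfrak{b}$). With this choice, any ultrafilter concentrated on $D_1$ that converges to a point of $D_2$ must project on the $C$-side to $\mathfrak{b}$ itself, which forces $Uf(\mathfrak{a})=\mathfrak{b}$, and then $\sigma(U\alpha(\mathfrak{a}))\wedge w\leq\gamma(\star)=w_0$ holds \emph{by definition} of $w_0$ as that supremum. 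This makes $\delta$ continuous unconditionally --- no appeal to~(2) --- and the remainder of your argument ($\gamma$ continuous by Corollary~\ref{d:cor:6}, then $w=\sigma(U\gamma(\mathfrak{b}))\leq\gamma(\star)=w_0$ via the binary-infima hypothesis) then goes through verbatim. So the repair is not a sharper estimate but a different test space: discreteness of $B$ inside $C$ is the essential point your proposal misses.
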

\begin{proof}
  We show first that conditions~\ref{d:item:14} and \ref{d:item:15} are
  sufficient for \(f\) to be effective for descent in \(\TopX\). To do so,
  consider a pullback in \(\Top\times_{\Set}\Fam(X)\) as in \eqref{d:eq:4}.
  We want to show that \( \gamma \) is continuous.  For \(\mathfrak{c}\leadsto
  c\) an ultrafilter convergence in \(C\), we show that
  \(U\gamma(\mathfrak{c})\leadsto\gamma(c)\). Put
  \(\mathfrak{b}=Ug(\mathfrak{c})\) and \(b=g(c)\). Then, since the functor
  \(U \colon\Top\to\Top\) is locally monotone, \(\sigma \colon UX\to X\) is
  monotone and we have \(\sigma(U\gamma(\mathfrak{c})) \leq
  \sigma(U\beta(\mathfrak{b}))\) (in order to apply \( U \) to \( \gamma \) we
  consider \( C \) with the discrete topology).  Therefore, by
  condition~\ref{d:item:15},
  \begin{displaymath}
    \sigma(U\gamma(\mathfrak{c}))
    = \bigvee_{%
      \substack{%
        \mathfrak{a}\leadsto a \\
        Uf(\mathfrak{a}) = \mathfrak{b}, \\
        f(a) = b}}
    \sigma(U\gamma(\mathfrak{c})) \wedge \sigma(U\alpha(\mathfrak{a})).
  \end{displaymath}
  Since the functor \(U \colon\Set\to\Set\) weakly preserves pullbacks, for every \(\mathfrak{a}\in UA\) with \(Uf(\mathfrak{a}) = \mathfrak{b}\), there is some \(\mathfrak{w}\in UD\) with \(U\pi_1(\mathfrak{w})=\mathfrak{a}\) and \(U\pi_2(\mathfrak{w})=\mathfrak{c}\). Therefore, applying also Lemma~\ref{d:lem:2}, we obtain
  \begin{displaymath}
    \bigvee_{%
      \substack{%
        \mathfrak{a}\leadsto a \\
        Uf(\mathfrak{a}) = \mathfrak{b}, \\
        f(a) = b}}
    \sigma(U\gamma(\mathfrak{c})) \wedge \sigma(U\alpha(\mathfrak{a}))
    =
    \bigvee_{%
      \substack{%
        \mathfrak{w}\in UD \\
        U\pi_2(\mathfrak{w}) = \mathfrak{c}, \\
        U\pi_1(\mathfrak{w})\leadsto a, \\
        f(a)=b}}
    \sigma(U\delta(\mathfrak{w})).
  \end{displaymath}
  For every \(\mathfrak{w}\in UD\) with \(U\pi_2(\mathfrak{w}) = \mathfrak{c}\), \(U\pi_1(\mathfrak{w})\leadsto a\) and \(f(a)=b\), we have that \(\mathfrak{w}\leadsto(a,c)\) in \(D\). Since \(\delta \colon D\to X\) is continuous, \(U\delta(\mathfrak{w})\leadsto\delta(a,c)\), which is equivalent to \( \sigma(U\delta(\mathfrak{w}))\leq\delta(a,c)\). Finally, by definition,
  \begin{displaymath}
    \delta(a,c)=\alpha(a)\wedge\gamma(c)\leq\gamma(c).
  \end{displaymath}
  Summing up, \(\sigma(U\gamma(\mathfrak{c}))\leq\gamma(c)\), that is, \(U\gamma(\mathfrak{c})\leadsto\gamma(c)\). We conclude that \(\gamma \colon C\to X\) is continuous. By Corollary~\ref{d:cor:6}, \(f\) is effective for descent in \(\TopX\).

  Conversely, assume now that \(f\colon(A,\alpha)\to(B,\beta)\) is effective for descent in \(\TopX\). By Lemma~\ref{d:lem:3}, \(f \colon A\to B\) is effective for descent in \(\Top\).
  To verify Condition~\ref{d:item:15}, fix an ultrafilter convergence \(\mathfrak{b}\leadsto b\) in \(B\) and \(w\in X\) with \(w\leq\sigma(U\beta(\mathfrak{b}))\). Consider the set \(C=B+\{\star\}\) (for simplicity of notation, we assume \(\star\notin B\) so that \(C=B\cup\{\star\}\)) together with the topology where the only non-trivial ultrafilter convergence is \(\mathfrak{b}\leadsto\star\). Furthermore, we define the maps \(g \colon C\to B\) with \(g(\star)=b\) and \(g(c)=c\) for every \(c\in B\), and
  \begin{align*}
    \gamma \colon C & \longrightarrow X\\
    c & \longmapsto
        \begin{cases}
          \beta(c)\wedge w &\text{if \(c\in B\)}\\
          \displaystyle\bigvee_{%
          \substack{%
          \mathfrak a \leadsto a \\
          Uf(\mathfrak{a}) = \mathfrak{b}, \\
          f(a) = b}}
          w \wedge \sigma(U\alpha(\mathfrak{a}))
                           & \text{if \(c=\star\)}
        \end{cases}
  \end{align*}
  For the pullback~\eqref{d:eq:4}, we show now that \(\delta \colon D\to X\) is continuous. We can write the set \(D\) as
  \begin{displaymath}
    D=\underbrace{\{(a,c)\in D\mid c\in B\}}_{=D_1}
    +\underbrace{\{(a,c)\in D\mid c=\star\}}_{=D_2}.
  \end{displaymath}
  Let \(\mathfrak{d}\leadsto(a,c)\) be an ultrafilter convergence in \(D\). We put \(\mathfrak{a}=U\pi_1(\mathfrak{d})\) and \(\mathfrak{c}=U\pi_1(\mathfrak{d})\).
  \begin{itemize}
  \item Assume that \(\mathfrak{d}\in UD_1\) and \(c\in B\). Then
    \begin{displaymath}
      \sigma(U\delta(\mathfrak{d})) %
      \leq \sigma(U\alpha(\mathfrak{a}))\wedge\sigma(U\gamma(\mathfrak{c})) %
      \leq \sigma(U\alpha(\mathfrak{a}))\wedge w %
      \leq \alpha(a)\wedge w %
      \leq \alpha(a)\wedge\beta(f(a))\wedge w %
      =\delta(a,c).
    \end{displaymath}
  \item Assume that \(\mathfrak{d}\in UD_2\) and \(c=\star\). Then \(\mathfrak{c}=\doo{\star}\) and
    \begin{displaymath}
      \sigma(U\delta(\mathfrak{d})) %
      \leq \sigma(U\alpha(\mathfrak{a}))\wedge\gamma(\star) %
      \leq \alpha(a)\wedge\gamma(\star)=\delta(a,c).
    \end{displaymath}
  \item Assume that \(\mathfrak{d}\in UD_1\) and \(c=\star\). Then \(\mathfrak{c}=\mathfrak{b}\) and therefore \(Uf(\mathfrak{a})=\mathfrak{b}\). Hence, \(\sigma(U\alpha(\mathfrak{a}))\wedge w=\gamma(\star)\). Since also \(\sigma(U\alpha(\mathfrak{a}))\leq \alpha(a)\), we obtain
    \begin{displaymath}
      \alpha(U\delta(\mathfrak{d}))%
      \leq \sigma(U\alpha(\mathfrak{a}))\wedge w %
      \leq \alpha(a)\wedge\gamma(\star)=\delta(a,\star).
    \end{displaymath}
  \item Finally, \(\mathfrak{d}\in UD_2\) and \(c\in B\) is not possible by the definition of the convergence in \(C\).
  \end{itemize}
  Hence, we conclude that \(U\delta(\mathfrak{d})\leadsto\delta(a,c)\) in \(X\). This proves that \(\delta \colon D\to X\) is continuous. By Corollary~\ref{d:cor:6}, \(\gamma \colon C\to X\) is continuous as well. Therefore \(\sigma(U\gamma(\mathfrak{b}))\leq\gamma(\star)\). Since also
  \begin{displaymath}
    \sigma(U\gamma(\mathfrak{b}))
    =\sigma(U\beta(\mathfrak{b}))\wedge w=w,
  \end{displaymath}
  \(w\leq \gamma(\star)\) and we have proven \ref{d:item:15}.
\end{proof}

\begin{lemma}
  Let \(f \colon (A,\alpha)\to(B,\beta)\) be a morphism in \(\TopX\) satisfying Condition~\ref{d:item:15} of Theorem~\ref{d:thm:7}. Then \(f \colon (\alpha(a))_{a\in A}\to(\beta(b))_{b\in B}\) is a descent morphism in \(\Fam(X)\).
\end{lemma}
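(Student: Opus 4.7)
The goal is to verify that $f \colon (\alpha(a))_{a\in A}\to(\beta(b))_{b\in B}$ is a pullback-stable regular epimorphism in $\Fam(X)$, which, as recalled in the introduction of this section, characterises descent morphisms in the finitely complete category $\Fam(X)$. The whole argument proceeds by specialising Condition~\ref{d:item:15} of Theorem~\ref{d:thm:7} to principal ultrafilters on $B$, which already forces the supremum-over-the-fibre identity that characterises regular epimorphisms in $\Fam(X)$.

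First I would show that $\beta(b)=\bigvee_{a\in f^{-1}(b)}\alpha(a)$ for every $b\in B$. To this end, I apply Condition~\ref{d:item:15} to the principal ultrafilter $\mathfrak{b}=\doo{b}$ and $w=\beta(b)$; since $\doo{b}$ is principal one has $\sigma(U\beta(\doo{b}))=\beta(b)$. The indexing set of the supremum then consists of pairs $(\mathfrak{a},a)$ with $Uf(\mathfrak{a})=\doo{b}$, $\mathfrak{a}\leadsto a$ and $f(a)=b$. Taking the principal choice $\mathfrak{a}=\doo{a}$ for each $a\in f^{-1}(b)$ produces the contribution $\beta(b)\wedge\alpha(a)=\alpha(a)$; conversely, for any admissible $\mathfrak{a}\leadsto a$, continuity of $\alpha\colon A\to X$ yields $U\alpha(\mathfrak{a})\leadsto\alpha(a)$ in $X$, and therefore $\sigma(U\alpha(\mathfrak{a}))\leq\alpha(a)$ by the adjunction $e_X\dashv\sigma$ in $\Ord$. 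Hence the right-hand supremum collapses to $\bigvee_{a\in f^{-1}(b)}\alpha(a)$, as required.

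For pullback stability, consider an arbitrary morphism $g\colon(C,\gamma)\to(B,\beta)$ in $\Fam(X)$. The pullback of $f$ along $g$ is the projection $\pi_2\colon(A\times_B C,\alpha\sqcap\gamma)\to(C,\gamma)$, whose fibre over $c\in C$ is $f^{-1}(g(c))\times\{c\}$ with weight $\alpha(a)\wedge\gamma(c)$. Re-running the previous computation, but now with $\mathfrak{b}=\doo{g(c)}$ and the more general weight $w=\gamma(c)\leq\beta(g(c))=\sigma(U\beta(\doo{g(c)}))$, yields
\begin{displaymath}
  \gamma(c)=\bigvee_{a\in f^{-1}(g(c))}\gamma(c)\wedge\alpha(a),
\end{displaymath}
which is precisely the criterion for $\pi_2$ to be a regular epimorphism in $\Fam(X)$. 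Since $g$ was arbitrary, $f$ is pullback-stable, hence a descent morphism in $\Fam(X)$.

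The main subtlety is checking that the non-principal terms appearing in the supremum of Condition~\ref{d:item:15} never exceed the corresponding principal ones: this is exactly what the inequality $\sigma(U\alpha(\mathfrak{a}))\leq\alpha(a)$ for $\mathfrak{a}\leadsto a$ buys us, and it is the reason a single invocation of the condition, with the parameter $w$ left free, can handle both the regular-epi property and its pullback-stability by the same calculation.
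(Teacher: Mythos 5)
Your proof is correct, and its computational core is the same as the paper's: both arguments specialise Condition~\ref{d:item:15} of Theorem~\ref{d:thm:7} to the principal ultrafilter \(\doo{b}\) (so that \(\sigma(U\beta(\doo{b}))=\beta(b)\)) and use the inequality \(\sigma(U\alpha(\mathfrak{a}))\leq\alpha(a)\) for \(\mathfrak{a}\leadsto a\), which follows from continuity of \(\alpha\) and \(e_X\dashv\sigma\), to collapse the supremum over convergences to a supremum over the fibre \(f^{-1}(b)\). The difference lies only in how the target condition is packaged: the paper cites \cite{CP25} for the criterion that \(f\) is a descent morphism in \(\Fam(X)\) if and only if \(w=\bigvee_{a,\,f(a)=b}w\wedge\alpha(a)\) for all \(b\) and all \(w\leq\beta(b)\) --- exactly the free-parameter identity you derive --- whereas you rederive that criterion by hand, interpreting \(w\) as the weight \(\gamma(c)\) of a point in an arbitrary pullback and checking pullback-stability of the regular-epimorphism property directly. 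This makes your argument more self-contained, but note that you assert rather than prove that the fibrewise-supremum identity \(\gamma(c)=\bigvee_{a\in f^{-1}(g(c))}\gamma(c)\wedge\alpha(a)\) is ``precisely the criterion'' for \(\pi_2\) to be a regular epimorphism in \(\Fam(X)\); that identification (including the treatment of empty fibres, where the identity forces \(\gamma(c)=\bot\)) is exactly the content of the cited \cite[Lemma~2.1]{CP25} (compare also Lemma~\ref{d:lem:4}), so if you keep your route you should either prove it or cite it as the paper does. Your first step is also subsumed by the second upon taking \(g=\id\), so it could be omitted.
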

\begin{proof}
  By \cite[Lemma~2.1]{CP25} (see also \cite{Pre24, Pre23}), the morphism \(f \colon (\alpha(a))_{a\in A}\to(\beta(b))_{b\in B}\) is a descent morphism in \(\Fam(X)\) if and only if, for all \(b\in B\) and \(w\leq\beta(b)\),
  \begin{displaymath}
    w=\bigvee_{a,\,f(a)=b}w\wedge\alpha(a).
  \end{displaymath}
  To verify this condition, consider \((\mathfrak{b}\leadsto b)=(\doo{b}\leadsto b)\) in Condition~\ref{d:item:15} of Theorem~\ref{d:thm:7} and observe that \(\sigma(U\alpha(\mathfrak{a}))\leq\alpha(a)\) for every \(\mathfrak{a}\leadsto a\) in \(A\).
\end{proof}

By \cite[Corollary 4.6]{Pre23}, every descent morphism in \(\Fam(X)\) is
effective for descent provided that \(X\) is also a frame. Therefore we can
state:
\begin{theorem}
  \label{d:thm:8}
  Assume that binary infima preserve smallest convergence points in the topological \(\bigwedge\)-semilattice \(X\) and, moreover, that the ordered set \(X\) is a frame. Then a morphism \(f \colon(A,\alpha)\to(B,\beta)\) in \(\TopX\) is effective for descent if and only if
  \begin{enumerate}
  \item \(f \colon A\to B\) is effective for descent in \(\Top\),
  \item\label{d:item:25} for all ultrafilter convergences \(\mathfrak{b}\leadsto b\) in \(B\), we have
    \begin{displaymath}
      \sigma(U\beta(\mathfrak{b}))
      = \bigvee_{%
        \substack{%
          \mathfrak a \leadsto a \\
          Uf(\mathfrak{a}) = \mathfrak{b}, \\
          f(a) = b}}
       \sigma(U\alpha(\mathfrak{a})).
    \end{displaymath}
  \end{enumerate}
\end{theorem}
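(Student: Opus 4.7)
The plan is to deduce Theorem~\ref{d:thm:8} from Theorem~\ref{d:thm:7} by (i) showing that the distributivity condition~\ref{d:item:25} stated here is equivalent to the distributivity condition~\ref{d:item:15} of Theorem~\ref{d:thm:7} once \(X\) is a frame, and (ii) securing the premise of Theorem~\ref{d:thm:7}, namely that \(f\) is effective for descent in \(\Fam(X)\), in both directions of the present biconditional.

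For (i), fix an ultrafilter convergence \(\mathfrak{b}\leadsto b\) in \(B\) and observe that whenever \(\mathfrak{a}\in UA\) satisfies \(Uf(\mathfrak{a}) = \mathfrak{b}\), the inequality \(\alpha\leq \beta\cdot f\) combined with local monotonicity of \(U\) and monotonicity of \(\sigma\) yields \(\sigma(U\alpha(\mathfrak{a}))\leq \sigma(U\beta(\mathfrak{b}))\). Taking \(w = \sigma(U\beta(\mathfrak{b}))\) in condition~\ref{d:item:15} then gives condition~\ref{d:item:25} at once, since each meet \(w\wedge\sigma(U\alpha(\mathfrak{a}))\) simplifies to \(\sigma(U\alpha(\mathfrak{a}))\). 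Conversely, given condition~\ref{d:item:25} and any \(w\leq \sigma(U\beta(\mathfrak{b}))\), frame distributivity applied to
\begin{displaymath}
  w = w \wedge \sigma(U\beta(\mathfrak{b})) = w \wedge \bigvee_{\substack{\mathfrak{a}\leadsto a\\ Uf(\mathfrak{a}) = \mathfrak{b}\\ f(a) = b}} \sigma(U\alpha(\mathfrak{a}))
\end{displaymath}
recovers condition~\ref{d:item:15}.

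For (ii), the two directions require different routes to the premise. In the backward direction, condition~\ref{d:item:25} implies condition~\ref{d:item:15} by (i), and so the lemma preceding Theorem~\ref{d:thm:8} yields that \(f\) is a descent morphism in \(\Fam(X)\); the frame hypothesis on \(X\) then promotes this to effective descent via \cite[Corollary~4.6]{Pre23}, and Theorem~\ref{d:thm:7} concludes effective descent in \(\TopX\). In the forward direction, condition~1 follows directly from Lemma~\ref{d:lem:3}; since every effective descent morphism is in particular a descent morphism, Lemma~\ref{d:lem:4}\ref{d:item:23} gives that \(f\) is descent in \(\Fam(X)\), and the frame hypothesis again upgrades this to effective descent in \(\Fam(X)\). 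Theorem~\ref{d:thm:7} then supplies condition~\ref{d:item:15}, whence condition~\ref{d:item:25} by (i).

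The main obstacle is really a bookkeeping one: the asymmetry between the two directions forces two separate arguments to install the \(\Fam(X)\)-effective descent premise of Theorem~\ref{d:thm:7}. Both rely essentially on the frame condition, which -- via \cite[Corollary~4.6]{Pre23} -- bridges descent and effective descent in \(\Fam(X)\); once this bridge is in place, the equivalence of the two distributivity conditions is a routine application of frame distributivity.
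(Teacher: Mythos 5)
Your proposal is correct and follows essentially the same route the paper intends: the paper derives Theorem~\ref{d:thm:8} from Theorem~\ref{d:thm:7} via the preceding lemma together with \cite[Corollary~4.6]{Pre23}, leaving the frame-distributivity equivalence of the two conditions and the two-directional installation of the \(\Fam(X)\)-premise implicit, which you have simply spelled out.
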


There is a natural way of producing spaces \(X\) satisfying the hypotheses of
Theorems \ref{d:thm:7} and \ref{d:thm:8}, as follows.

\begin{proposition}\label{m:prop}
If \(X\) is a continuous lattice equipped with the lower topology, then \(X\) is a topological \(\bigwedge\)-semilattice, with smallest convergence points preserved by binary infima.
\end{proposition}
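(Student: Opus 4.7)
The first assertion is immediate from Lemma~\ref{m:adjoint}\ref{d:item:21}, since every continuous lattice is by definition complete. Together with Theorem~\ref{d:thm:3}, this also supplies a left adjoint \(\sigma\colon UX\to X\) of \(e_X\colon X\to UX\). My plan for the second assertion is to make \(\sigma\) explicit using the lower topology, and then reduce the equality \(\sigma(U{\wedge}(\mathfrak{X}))=\sigma(U\pi_1(\mathfrak{X}))\wedge\sigma(U\pi_2(\mathfrak{X}))\) to the meet-continuity of the continuous lattice \(X\).

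Since a sub-basis for the closed sets of the lower topology on \(X\) is given by the principal up-sets \({\uparrow} a\) (\(a\in X\)), an ultrafilter \(\mathfrak{x}\in UX\) converges to \(y\) if and only if \(a\leq y\) whenever \({\uparrow} a\in\mathfrak{x}\). Setting \(A_\mathfrak{x}:=\{a\in X\mid{\uparrow} a\in\mathfrak{x}\}\), the smallest convergence point of \(\mathfrak{x}\) is therefore \(\sigma(\mathfrak{x})=\bigvee A_\mathfrak{x}\). Using the identity \({\uparrow} a\cap{\uparrow} a'={\uparrow}(a\vee a')\) and closure of ultrafilters under finite intersections and supersets, a direct check shows that \(A_\mathfrak{x}\) is downward-closed and closed under finite joins; it is in particular directed, and non-empty since \({\uparrow}\bot=X\in\mathfrak{x}\).

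Now fix \(\mathfrak{X}\in U(X\times X)\) and set \(\mathfrak{x}_i:=U\pi_i(\mathfrak{X})\) for \(i\in\{1,2\}\) and \(\mathfrak{y}:=U{\wedge}(\mathfrak{X})\). For every \(a\in A_{\mathfrak{x}_1}\) and \(b\in A_{\mathfrak{x}_2}\), both \({\uparrow} a\times X\) and \(X\times{\uparrow} b\) belong to \(\mathfrak{X}\), hence so does \({\uparrow} a\times{\uparrow} b\); since \({\uparrow} a\times{\uparrow} b\subseteq{\wedge}^{-1}({\uparrow}(a\wedge b))\), it follows that \(a\wedge b\in A_\mathfrak{y}\), and hence \(a\wedge b\leq\sigma(\mathfrak{y})\). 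Because \(X\) is a continuous lattice, binary meets distribute over directed joins, so
\[
  \sigma(\mathfrak{x}_1)\wedge\sigma(\mathfrak{x}_2)
  =\Big(\bigvee A_{\mathfrak{x}_1}\Big)\wedge\Big(\bigvee A_{\mathfrak{x}_2}\Big)
  =\bigvee_{a\in A_{\mathfrak{x}_1},\,b\in A_{\mathfrak{x}_2}}a\wedge b
  \leq\sigma(\mathfrak{y}).
\]
The reverse inequality is immediate from the continuity of \({\wedge}\colon X\times X\to X\): since \(\mathfrak{x}_i\leadsto\sigma(\mathfrak{x}_i)\) by definition of \(\sigma\), componentwise convergence yields \(\mathfrak{X}\leadsto(\sigma(\mathfrak{x}_1),\sigma(\mathfrak{x}_2))\) in \(X\times X\), and applying \(\wedge\) gives \(\mathfrak{y}\leadsto\sigma(\mathfrak{x}_1)\wedge\sigma(\mathfrak{x}_2)\), i.e.\ \(\sigma(\mathfrak{y})\leq\sigma(\mathfrak{x}_1)\wedge\sigma(\mathfrak{x}_2)\).

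The crucial step, and the only place where continuity of the lattice enters in an essential way rather than mere completeness, is the meet-continuity invoked to pull the binary meet past the two directed joins in the display above; everything else uses only that \(X\) is a complete lattice equipped with its lower topology.
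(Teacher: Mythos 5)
Your proof is correct, but it follows a genuinely different route from the paper's. The paper never computes \(\sigma\) directly: it realises \(X\) as the cocompact (de Groot) dual \(Y^{\mathrm{op}}\) of an injective space \(Y\) (the same set with the lower Scott topology of the reversed order), imports from \cite{GHK+03} the facts that \(\sigma\) is Lawson convergence, \(\sigma(\mathfrak{y})=\bigwedge_{A\in\mathfrak{y}}\bigvee A\), and that the cocompact topology of \(Y\) is the upper topology, and then obtains preservation of binary infima from the earlier lemma that a continuous map with a right adjoint in \(\Top\) preserves smallest convergence points, applied to \(\vee\colon Y\times Y\to Y\dashv\Delta_Y\). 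You instead read off \(\sigma(\mathfrak{x})=\bigvee\{a\mid{\uparrow}a\in\mathfrak{x}\}\) directly from the sub-basis of closed sets of the lower topology (using the prime property of ultrafilters to reduce convergence to sub-basic closed sets) and verify the identity \(\sigma(U{\wedge}(\mathfrak{X}))=\sigma(U\pi_1(\mathfrak{X}))\wedge\sigma(U\pi_2(\mathfrak{X}))\) by hand: one inequality from the directedness of the sets \(A_{\mathfrak{x}_i}\) together with meet-continuity, the other from continuity of \(\wedge\), which Lemma~\ref{m:adjoint} already provides for any complete lattice with the lower topology. Each approach buys something: the paper's places the result inside stably compact space duality and explains conceptually where such \(X\) arise, at the cost of leaning on \cite[Propositions~VI-6.24 and VII-3.10]{GHK+03}; yours is elementary and self-contained, and it isolates the exact hypothesis needed — your closing remark is accurate, since every continuous lattice is meet-continuous, so your argument in fact proves the statement for all meet-continuous complete lattices with the lower topology, a strictly larger class.
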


\begin{proof}
  To prove this statement, we will explain how a continuous lattice equipped with the lower topology can be obtained as the dual space of an injective space. As we have already observed, if \(Y\) is an injective space, then \(Y\) is an op-continuous lattice with respect to the natural order and the topology of \(Y\) coincides with the lower Scott topology. Furthermore, every ultrafilter in \(Y\) has smallest convergence points; in fact, the left adjoint \(\sigma \colon UY\to Y\) of \(e_Y \colon Y\to UY\) is precisely the convergence of the Lawson topology of \(Y\) \cite[Proposition~VII-3.10]{GHK+03}. Explicitly,
  \begin{displaymath}
    \sigma(\mathfrak{y})=\bigwedge_{A\in \mathfrak{y}}\bigvee A,
  \end{displaymath}
  for all \(\mathfrak{y}\in UY\). The space \(Y\) is stably compact, hence we can define its dual space \(Y^{\mathrm{op}}\), whose topology is given by the convergence relation
  \begin{displaymath}
    \mathfrak{y}\leadsto y\iff y\leq \sigma(\mathfrak{y});
  \end{displaymath}
  this is the so-called cocompact topology. Hence the natural order of \(Y^{\mathrm{op}}\) is the dual of the natural order of \(Y\) and also in \(Y^{\mathrm{op}}\) every ultrafilter \(\mathfrak{x}\) has \(\sigma(\mathfrak{x})\) as smallest convergence point.
  Furthermore, by \cite[Proposition~VI-6.24]{GHK+03}, the cocompact topology of \(Y\) is the upper topology with respect to the natural order of \(Y\), hence the lower topology with respect to the natural order of \(Y^{\mathrm{op}}\). By Lemma~\ref{m:adjoint}, \(Y^{\mathrm{op}}\) is a topological \(\bigwedge\)-semilattice. Finally, since the map
  \begin{displaymath}
    \vee \colon Y\times Y \longrightarrow Y
  \end{displaymath}
  is continuous and left adjoint to \(\Delta_Y \colon Y\to Y\times Y\), it preserves smallest convergence points of ultrafilters. We conclude that the binary infimum operation
  \begin{displaymath}
    \wedge \colon Y^{\mathrm{op}}\times Y^{\mathrm{op}} \longrightarrow Y^{\mathrm{op}}
  \end{displaymath}
in \(Y^{\mathrm{op}}\) preserves smallest convergence points of ultrafilters.

 In conclusion,  when \(X\) is a continuous lattice equipped with the lower topology, denoting by \(Y\) the topological space with the same underlying set and whose topology is the lower Scott topology with respect to the reverse of the natural order of \(X\), we obtain \(X=Y^{\mathrm{op}}\), and the result follows.
\end{proof}
We note that, if \(X\) is a completely distributive lattice (see \cite{Ran52, FW90} and \cite[Corollary~I-2.9]{GHK+03}) equipped with the lower topology, in particular if \(X=\bkS\) and \(X=[0,1]\) as in Examples~\ref{mm:examples1}, then \(X\) is both a continuous lattice and a frame. Hence:

\begin{corollary}
If \(X\) is a completely distributive lattice equipped with the lower topology, then \(X\) satisfies the hypotheses of Theorem~\ref{d:thm:8}.
\end{corollary}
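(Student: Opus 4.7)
The plan is to verify the three hypotheses of Theorem~\ref{d:thm:8} separately, namely that \(X\) is a topological \(\bigwedge\)-semilattice, that binary infima preserve smallest convergence points of ultrafilters in \(X\), and that the underlying ordered set \(X\) is a frame. Two of these can be extracted directly from Proposition~\ref{m:prop}, while the frame property will be read off from complete distributivity.

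First I would invoke the classical fact (see \cite[Corollary~I-2.9]{GHK+03}, already cited just above the corollary) that every completely distributive lattice is in particular a continuous lattice. Once this is recorded, Proposition~\ref{m:prop} applies verbatim to \(X\) equipped with the lower topology and yields simultaneously that \(X\) is a topological \(\bigwedge\)-semilattice and that the binary meet operation \(\wedge\colon X\times X\to X\) preserves smallest convergence points of ultrafilters. This disposes of hypotheses (1) and (2) of Theorem~\ref{d:thm:8} in one stroke.

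Second, I would check that \(X\), as an ordered set, is a frame. Here I use that complete distributivity is self-dual and, in particular, implies the infinite distributive law \(x\wedge\bigvee_{i\in I}y_i=\bigvee_{i\in I}(x\wedge y_i)\) for every \(x\in X\) and every family \((y_i)_{i\in I}\) in \(X\); since \(X\) is complete by assumption, this is exactly the frame axiom. This supplies hypothesis (3). Combining the three verifications, \(X\) fulfils all assumptions of Theorem~\ref{d:thm:8}.

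There is essentially no obstacle here: the corollary is a collation of the preceding Proposition~\ref{m:prop} with the well-known implications \emph{completely distributive} \(\Rightarrow\) \emph{continuous} and \emph{completely distributive} \(\Rightarrow\) \emph{frame}. If anything, the only point to be careful about is making sure that the lower topology on \(X\) is the one to which Proposition~\ref{m:prop} refers; this is automatic from the hypothesis of the corollary. The proof can therefore be presented in just a few lines.
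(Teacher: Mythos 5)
Your proposal is correct and follows exactly the paper's own (very brief) argument: the remark preceding the corollary notes that a completely distributive lattice is both a continuous lattice and a frame, so Proposition~\ref{m:prop} supplies the topological \(\bigwedge\)-semilattice and preservation-of-smallest-convergence-points hypotheses, while the frame property gives the remaining assumption of Theorem~\ref{d:thm:8}. Nothing is missing.
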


\begin{example}
  In \(\LaxComma{\Top}{\bkS}\), by Theorem~\ref{d:thm:8}, \(f
  \colon(A,A_0)\to(B,B_0)\) is an effective descent morphism if and only if
  \(f \colon A\to B\) is an effective descent morphism in \(\Top\) and \(f_0
  \colon A_0\to B_0\) is a descent morphism in \(\Top\). In fact,
  Condition~\ref{d:item:25} of Theorem~\ref{d:thm:8} specialises to: for all
  ultrafilter convergences \(\mathfrak{b}\leadsto b\) in \(B\) with \(B_0\in
  \mathfrak{b}\) (and therefore \(b\in B_0\)), there exists an ultrafilter
  convergence \(\mathfrak{a}\leadsto a\) in \(A\) with \(A_0\in \mathfrak{a}\)
  (and therefore \(a\in A_0\)) which is mapped to \(\mathfrak{b}\leadsto b\),
  that is \( f_0 \) is a descent morphism in \( \Top \).
\end{example}

\begin{example}
  When we equip \(X=[0,1]\) with the lower topology as in Example
  \ref{mm:examples1}, for every ultrafilter \(\mathfrak{x}\) on \([0,1]\) (see
  \cite{Hof07}, for example),
  \begin{displaymath}
    \sigma(\mathfrak{x})=
    \sup\{u\in [0,1]\mid {\uparrow}u\in \mathfrak{x}\}.
  \end{displaymath}
  Hence, by Theorem~\ref{d:thm:8}, a morphism \(f \colon (A,(A_u)_{u\in
  [0,1]})\to (B,(B_u)_{u\in [0,1]})\) in \(\LaxComma{\Top}{[0,1]}\) is
  effective for descent if and only if \(f \colon A\to B\) is effective for
  descent in \(\Top\) and, for all \(u\in [0,1]\) and all ultrafilter
  convergences \(\mathfrak{b}\leadsto b\) in \(B_u\), for every \(v<u\) there
  exists an ultrafilter convergence \(\mathfrak{a}\leadsto a\) in \(A_v\) with
  \(Uf(\mathfrak{a})=\mathfrak{b}\) and \(f(a)=b\).
\end{example}

The interval \( [0,1] \) is a completely distributive lattice, and its totally
below relation \( \lll \) is given by \( < \). The arguments used for \( [0,1]
\) generalise smoothly to completely distributive lattices, as stated in the
following result.
\begin{theorem}
  Let \(X\) be a completely distributive lattice equipped with the lower
  topology. A morphism \(f \colon (A,\alpha)\to(B,\beta)\) is effective for descent
  in \(\TopX\) if and only if
  \begin{enumerate}
    \item
      \( f \colon A \to B \) is effective for descent in \( \Top \);
    \item
      for each \(u\in X\), all ultrafilter convergences \(\mathfrak{b}\leadsto
      b\) in \(B_u\), and each \(v\lll u\), there exists an ultrafilter
      convergence \(\mathfrak{a}\leadsto a\) in \(A_v\) with
      \(Uf(\mathfrak{a})=\mathfrak{b}\) and \(f(a)=b\).
  \end{enumerate}
\end{theorem}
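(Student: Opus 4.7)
The plan is to derive this theorem from Theorem~\ref{d:thm:8} by translating its condition~\ref{d:item:25} into the totally-below form stated here. The hypotheses of Theorem~\ref{d:thm:8} are satisfied: every completely distributive lattice is both a continuous lattice and a frame, so by Proposition~\ref{m:prop}, $X$ is a topological $\bigwedge$-semilattice in which binary infima preserve smallest convergence points of ultrafilters. The computational tool is an explicit formula for $\sigma\colon UX\to X$: tracing through the $Y^{\mathrm{op}}$-construction of Proposition~\ref{m:prop}, or arguing directly from the fact that the closed sets of the lower topology are generated by the principal upsets ${\uparrow}v$, one obtains
\[
  \sigma(\mathfrak{x})=\bigvee\{v\in X\mid {\uparrow}v\in\mathfrak{x}\}.
\]
Consequently, an ultrafilter convergence $\mathfrak{b}\leadsto b$ in $B$ restricts to the closed subspace $B_u=\beta^{-1}({\uparrow}u)$ precisely when ${\uparrow}u\in U\beta(\mathfrak{b})$, which is strictly stronger than $u\leq\sigma(U\beta(\mathfrak{b}))$.

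For the forward implication, I would fix an ultrafilter convergence $\mathfrak{b}\leadsto b$ in $B$ and set $u_b=\sigma(U\beta(\mathfrak{b}))$. Given $v\lll u_b$, the interpolation property of $\lll$ (available in every completely distributive lattice) yields $w$ with $v\lll w\lll u_b$. Using the supremum formula $u_b=\bigvee\{w'\mid {\uparrow}w'\in U\beta(\mathfrak{b})\}$ and the definition of $\lll$ applied to $w\lll u_b$, there exists $w'$ with ${\uparrow}w'\in U\beta(\mathfrak{b})$ and $w\leq w'$, whence ${\uparrow}w\in U\beta(\mathfrak{b})$, so $\mathfrak{b}\leadsto b$ in $B_w$. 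Invoking the hypothesis with $u=w$ and $v\lll w$ then produces $\mathfrak{a}\leadsto a$ in $A_v$ with $Uf(\mathfrak{a})=\mathfrak{b}$ and $f(a)=b$; in particular $v\leq\sigma(U\alpha(\mathfrak{a}))$. Taking the supremum over all $v\lll u_b$ and using complete distributivity $u_b=\bigvee\{v\mid v\lll u_b\}$, one obtains
\[
  u_b\leq\bigvee_{\substack{\mathfrak{a}\leadsto a\\ Uf(\mathfrak{a})=\mathfrak{b},\, f(a)=b}}\sigma(U\alpha(\mathfrak{a})),
\]
while the reverse inequality follows from $\alpha\leq\beta\cdot f$ together with monotonicity of the functor $U$ and of $\sigma$ (using that every closed subset of $X$ is up-closed). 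This establishes condition~\ref{d:item:25} of Theorem~\ref{d:thm:8}.

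The backward direction is direct. Given $u\in X$, $\mathfrak{b}\leadsto b$ in $B_u$, and $v\lll u$, the membership ${\uparrow}u\in U\beta(\mathfrak{b})$ yields $u\leq u_b$, hence $v\lll u_b$. Condition~\ref{d:item:25} of Theorem~\ref{d:thm:8} identifies $u_b$ with the supremum of $\sigma(U\alpha(\mathfrak{a}))$ over the relevant $\mathfrak{a}\leadsto a$; by the definition of $\lll$, this provides one such $\mathfrak{a}\leadsto a$ with $v\leq\sigma(U\alpha(\mathfrak{a}))$ -- equivalently, $\mathfrak{a}\leadsto a$ in $A_v$ -- and satisfying $Uf(\mathfrak{a})=\mathfrak{b}$, $f(a)=b$.

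The hard part is the forward direction. Condition~(2) of the theorem is quantified over prescribed elements $u\in X$ for which $\mathfrak{b}$ actually converges in $B_u$, whereas condition~\ref{d:item:25} of Theorem~\ref{d:thm:8} involves the intrinsic element $u_b=\sigma(U\beta(\mathfrak{b}))$, which itself need not satisfy ${\uparrow}u_b\in U\beta(\mathfrak{b})$. The interpolation property of $\lll$ in completely distributive lattices is precisely what allows one to replace $u_b$ by a smaller element $w$ with ${\uparrow}w\in U\beta(\mathfrak{b})$, while preserving the relation $v\lll w$ so that the hypothesis~(2) can be applied.
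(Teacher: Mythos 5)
Your overall strategy --- reduce to Theorem~\ref{d:thm:8} via the explicit formula \(\sigma(\mathfrak{x})=\bigvee\{v\in X\mid {\uparrow}v\in\mathfrak{x}\}\) and the interpolation property of the totally below relation --- is exactly the argument the paper intends (the paper gives no written proof, saying only that the \([0,1]\) case ``generalises smoothly''), and your treatment of the sufficiency direction is correct and complete: the interpolation \(v\lll w\lll u_b\) followed by extraction of \(w'\) with \({\uparrow}w'\in U\beta(\mathfrak{b})\) and \(w\leq w'\) is precisely the point where complete distributivity is needed, and you identify it correctly.

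There is, however, one incorrect step in your ``backward'' direction. Having found \(\mathfrak{a}\leadsto a\) with \(v\leq\sigma(U\alpha(\mathfrak{a}))\), you assert that this is \emph{equivalent} to \(\mathfrak{a}\leadsto a\) in \(A_v\). It is not: \(\mathfrak{a}\) converging in \(A_v\) means \({\uparrow}v\in U\alpha(\mathfrak{a})\), which implies \(v\leq\sigma(U\alpha(\mathfrak{a}))\) but not conversely, since the supremum \(\sigma(U\alpha(\mathfrak{a}))=\bigvee\{v'\mid{\uparrow}v'\in U\alpha(\mathfrak{a})\}\) need not be witnessed by a single \(v'\geq v\) --- this is the very distinction you flagged for \(\beta\) at the start of your proposal, so the ``equivalently'' is inconsistent with your own setup. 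The repair is short: from \(v\lll u_b\) and
\begin{displaymath}
  u_b=\bigvee_{\mathfrak{a}}\sigma(U\alpha(\mathfrak{a}))
  =\bigvee_{\mathfrak{a}}\bigvee\{v'\mid{\uparrow}v'\in U\alpha(\mathfrak{a})\},
\end{displaymath}
apply the definition of \(\lll\) to the \emph{iterated} join on the right to obtain some \(\mathfrak{a}\) and some \(v'\) with \({\uparrow}v'\in U\alpha(\mathfrak{a})\) and \(v\leq v'\); then \({\uparrow}v\supseteq{\uparrow}v'\) gives \({\uparrow}v\in U\alpha(\mathfrak{a})\), i.e.\ \(A_v\in\mathfrak{a}\), and \(a\in A_v\) follows automatically because \(A_v\) is closed and \(\mathfrak{a}\leadsto a\). (Alternatively, interpolate \(v\lll v''\lll u_b\) first and then apply \(\lll\) twice.) With that one sentence inserted, the proof is complete.
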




\end{document}